\numberwithin{equation}{section}
\newcommand{\RR}{\mathbb{R}}
\newcommand{\CC}{\mathbb{C}}
\newcommand{\ZZ}{\mathbb{Z}}
\newcommand{\eps}{\varepsilon}
\newcommand{\vol}{\lambda}
\newtheorem{thm}{Theorem}[section]
\newtheorem{lem}[thm]{Lemma}
\newtheorem{clm}[thm]{Claim}
\newtheorem{cor}[thm]{Corollary}
\newtheorem*{main-discV2Thm}{Theorem \ref{main-discV2}}
\newtheorem*{curvatureProjectionsThm}{Theorem \ref{curvatureProjections}}
\newtheorem*{dimension-expanderThm}{Theorem \ref{dimension-expander}}
\newtheorem*{main-entropy-growthThm}{Theorem \ref{main-entropy-growth}}
\newtheorem*{main-energy-dispersionThm}{Theorem \ref{main-energy-dispersion}}
\theoremstyle{remark}
\newtheorem{rem}[thm]{Remark}
\newtheorem{example}{Example}
\newtheorem{defn}[thm]{Definition}
\begin{document}
\title{Dimension-expanding polynomials and the discretized Elekes-R\'onyai theorem}
\author{Orit E. Raz, Joshua Zahl}
\maketitle

\begin{abstract}
We characterize when bivariate real analytic functions are ``dimension expanding'' when applied to a Cartesian product. If $P$ is a bivariate real analytic function that is not locally of the form $P(x,y) = h(a(x) + b(y))$, then whenever $A$ and $B$ are Borel subsets of $\mathbb{R}$ with Hausdorff dimension $0<\alpha<1$, we have that $P(A,B)$ has Hausdorff dimension at least $\alpha + \eps$ for some $\eps(\alpha)>0$ that is independent of $P$. The result is sharp, in the sense that no estimate of this form can hold if $P(x,y) = h(a(x) + b(y))$. We also prove a more technical single-scale version of this result, which is an analogue of the Elekes-R\'onyai theorem in the setting of the Katz-Tao discretized ring conjecture. As an application, we show that a discretized non-concentrated set cannot have small nonlinear projection under three distinct analytic projection functions, provided that the corresponding 3-web has non-vanishing Blaschke curvature. 
\end{abstract}

\section{Introduction}
In \cite{ErdSze}, Erd\H{o}s and Szemer\'edi proved that if $A\subset\RR$ is a finite set, then either the sum set $A+A=\{a+a^\prime\colon a,a^\prime\in A\}$ or the product set $A.A=\{aa^\prime\colon a,a^\prime\in A\}$ must have cardinality much larger than that of $A$. %established the sum-product theorem.%, which describes the behavior of subsets of $\RR$ under addition and multiplication. 
\begin{thm}[Erd\H{o}s-Szemer\'edi]
There are positive constants $\eps$ and $c$ so that for all finite sets $A\subset\RR$, we have
\begin{equation}\label{sumProdIneq}
\#(A+A) + \#(A.A) \geq c(\#A)^{1+\eps}.
\end{equation}
\end{thm}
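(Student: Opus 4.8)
The plan is to follow the incidence-geometric argument of Elekes, which deduces the sum-product phenomenon from the Szemer\'edi--Trotter theorem on point-line incidences in the plane. Recall that the Szemer\'edi--Trotter theorem asserts that a set of $m$ points and a set of $n$ lines in $\RR^2$ determine at most $O(m^{2/3}n^{2/3}+m+n)$ incident pairs. The strategy is to build a configuration in which $A+A$ and $A.A$ control the number of points while $A$ alone forces many incidences, so that the upper bound on incidences turns into a lower bound for $\#(A+A)\cdot\#(A.A)$.

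Concretely, I would take the point set $\mathcal{P}=(A+A)\times(A.A)\subset\RR^2$, so that $\#\mathcal{P}=\#(A+A)\cdot\#(A.A)$, together with the family of lines $\mathcal{L}=\{\ell_{a,b}\colon a,b\in A\}$, where $\ell_{a,b}$ is the line $y=a(x-b)$. Distinct pairs $(a,b)\in A\times A$ give distinct lines, so $\#\mathcal{L}=(\#A)^2$. The crucial observation is that for every $c\in A$ the point $(b+c,\,ac)$ lies on $\ell_{a,b}$ and belongs to $\mathcal{P}$; as $c$ ranges over $A$ these points are distinct, so each of the $(\#A)^2$ lines contains at least $\#A$ points of $\mathcal{P}$, producing at least $(\#A)^3$ incidences.

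Feeding this into Szemer\'edi--Trotter gives
\begin{equation*}
(\#A)^3 \;\lesssim\; \bigl(\#(A+A)\cdot\#(A.A)\bigr)^{2/3}\bigl((\#A)^2\bigr)^{2/3} \;+\; \#(A+A)\cdot\#(A.A) \;+\; (\#A)^2.
\end{equation*}
If the second or third term on the right dominates, then $\#(A+A)\cdot\#(A.A)\gtrsim(\#A)^3$ (or $\#A=O(1)$) and we are done; otherwise the first term dominates, and rearranging yields $\#(A+A)\cdot\#(A.A)\gtrsim(\#A)^{5/2}$, hence $\#(A+A)+\#(A.A)\gtrsim(\#A)^{5/4}$, which is the claimed inequality with $\eps=1/4$.

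The real content of this argument is the Szemer\'edi--Trotter theorem, whose proof---via the crossing-number inequality, a cell decomposition, or the polynomial method---is where the work lies; granting it, the sum-product bound is a short translation. Thus I expect the main obstacle, in a genuinely self-contained treatment, to be Szemer\'edi--Trotter itself. If one only wants \emph{some} $\eps>0$, as the statement demands, the original iterative argument of Erd\H{o}s and Szemer\'edi using elementary sumset and product-set inequalities also works, at the cost of a much smaller exponent.
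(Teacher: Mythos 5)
The paper does not prove this theorem at all: it is quoted as background from the reference \cite{ErdSze}, so there is no in-paper argument to compare against. Your proof is the standard argument of Elekes deducing sum-product from the Szemer\'edi--Trotter incidence theorem, and it is essentially correct: the point $(b+c,ac)$ does lie on $\ell_{a,b}$ and in $\mathcal{P}$, the incidence count $(\#A)^3$ versus the Szemer\'edi--Trotter bound gives $\#(A+A)\cdot\#(A.A)\gtrsim (\#A)^{5/2}$ in the main case, and the AM--GM step yields $\eps=1/4$. One small repair is needed: the assertion that distinct pairs $(a,b)$ give distinct lines fails if $0\in A$, since all lines $\ell_{0,b}$ coincide with $y=0$; you should first delete $0$ from $A$ (which changes $\#A$ by at most $1$ and is harmless) or restrict to $a\neq 0$. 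It is also worth being explicit that the points $(b+c,ac)$, $c\in A$, are distinct because their first coordinates $b+c$ are. As you note, this route is different from (and quantitatively much stronger than) the original Erd\H{o}s--Szemer\'edi argument the paper cites, which was an elementary iterative sumset argument producing only an unspecified small $\eps>0$; the price of your approach is that it takes Szemer\'edi--Trotter as a black box, which is entirely standard and acceptable here.
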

They conjectured that \eqref{sumProdIneq} should hold for all $\eps<1$. This conjecture remains open, and the current best result in this direction is due to Rudnev and Stevens \cite{RS20}, who showed that \eqref{sumProdIneq} is true whenever $\eps< 1/3 + 2/1167$. 

The Erd\H{o}s-Szemer\'edi Theorem quantifies the principle that a subset of $\RR$ cannot be approximately closed under both addition and multiplication. In \cite{ER}, Elekes and R\'onyai developed this idea in a slightly different direction, and proved the following:
\begin{thm}[Elekes-R\'onyai]\label{ElekesRonyai}
Let $P$ be a bivariate real polynomial. Then either $P$ is one of the special forms $P(x,y) = h(a(x)+b(y))$ or $P(x,y) = h(a(x)b(y))$, where $h,a,b$ are univariate real polynomials, or %there exists $\eps>0$ and $c>0$ so 
that for all finite sets of real numbers $A,B$ of cardinality $N$, we have 
\begin{equation}\label{growthPoly}
\#P(A,B) =\omega(N).
\end{equation}
\end{thm}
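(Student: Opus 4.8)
The plan is to prove the contrapositive: assume $\#P(A,B)\le CN$ for some constant $C$ and a sequence of $N$-element sets $A,B\subset\RR$ with $N\to\infty$, and deduce that $P$ has one of the two special forms. First dispose of the trivial cases: if $\partial_y P\equiv 0$ then $P$ is a polynomial in $x$ alone, hence $P(x,y)=h(a(x)+b(y))$ with $b$ constant, and symmetrically if $\partial_x P\equiv 0$. So assume $P$ has positive degree in each variable; then for all but finitely many $a\in\RR$ the slice $y\mapsto P(a,y)$ is nonconstant of degree at most $\deg P$.

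The engine is a Cauchy--Schwarz plus incidence argument. For $t\in\RR$ set $m(t)=\#\{(a,b)\in A\times B:P(a,b)=t\}$; since $\sum_t m(t)=N^2$ and $m$ is supported on $P(A,B)$, Cauchy--Schwarz gives $\#\{(a_1,b_1,a_2,b_2)\in (A\times B)^2: P(a_1,b_1)=P(a_2,b_2)\}=\sum_t m(t)^2\ge N^4/\#P(A,B)\ge N^3/C$. After discarding the $O(N^2)$ quadruples with $a_1=a_2$, we still have $\gtrsim N^3$ such quadruples with $a_1\ne a_2$. For $a_1\ne a_2$ in $A$, introduce the planar algebraic curve
\[
\gamma_{a_1,a_2}=\{(u,v)\in\RR^2 : P(a_1,u)=P(a_2,v)\},
\]
which, after deleting $O(1)$ exceptional values, is an honest bounded-degree curve with no axis-parallel components. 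The quadruple count says exactly that these (at most) $N^2$ curves and the $N^2$ points of the grid $B\times B$ have $\gtrsim N^3$ incidences.

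Now apply the Pach--Sharir incidence bound for bounded-degree algebraic curves: if the $\gamma_{a_1,a_2}$ form $N^2$ essentially distinct curves with ``two degrees of freedom'' (any two points of $\RR^2$ lie on $O(1)$ of them, and two of them meet in $O(1)$ points), then the number of point--curve incidences is $O\big((N^2)^{2/3}(N^2)^{2/3}+N^2\big)=O(N^{8/3})=o(N^3)$, contradicting the lower bound. Hence one of these hypotheses fails robustly: either $(a_1,a_2)\mapsto\gamma_{a_1,a_2}$ collapses a positive-dimensional set of pairs onto a single curve, or two distinct curves $\gamma_{a_1,a_2},\gamma_{a_1',a_2'}$ share a one-dimensional component for a positive-dimensional family of quadruples. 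Arranging the incidence argument so that its failure yields such an \emph{algebraic} degeneracy (rather than an isolated coincidence) --- done by tracking everything over $\CC$ and arguing with varieties --- is already nontrivial.

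The last and hardest step is to show this degeneracy forces $P(x,y)=h(a(x)+b(y))$ or $P(x,y)=h(a(x)b(y))$. Over $\CC$, using that the level sets $\{P=t\}$ foliate the plane, the degeneracy of the correspondence ``$P(x,u)=P(x',v)$'' amounts to a one-parameter family of symmetries of it: a flow $x\mapsto\phi_s(x)$ coupled with reparametrizations of $u,v$ preserving the relation. Differentiating in $s$ and eliminating the auxiliary one-variable functions by resultants turns this into a second-order PDE for $P$, whose integrability is precisely the statement that the $3$-web formed by the level sets of $x$, of $y$, and of $P(x,y)$ has vanishing Blaschke curvature; by the classical characterization of hexagonal plane $3$-webs this web is then equivalent, under a fibre-preserving change of coordinates $(x,y)\mapsto(a(x),b(y))$, to the trivial web $\{X=c\},\{Y=c\},\{X+Y=c\}$, yielding a local analytic representation $P=h(a(x)+b(y))$ --- or the multiplicative one in $\log$ coordinates. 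Finally one promotes this to global polynomial $h,a,b$ via analytic continuation and the fact that $P$ is a polynomial. I expect this degenerate-case classification --- the elimination theory together with the rigidity of one-parameter subgroups of $\mathrm{PGL}_2$, of which, up to conjugacy, there are precisely the additive and multiplicative ones, matching the two special forms --- to be the main obstacle; the incidence half is comparatively routine. (Sharpness check: for $P=h(a(x)+b(y))$, taking $A,B$ with $a(A),b(B)$ arithmetic progressions of equal step gives $\#P(A,B)\le 2N-1$, and similarly with geometric progressions in the multiplicative case, so both special forms genuinely violate \eqref{growthPoly}.)
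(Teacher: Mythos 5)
The paper does not prove this theorem; it is quoted as background and attributed to \cite{ER}, so the only meaningful comparison is with the published proofs (the original \cite{ER} and the quantitative version in \cite{RSZ}, whose architecture the paper summarizes in its ``Proof ideas'' section). Your outline reproduces that architecture faithfully: contrapositive, Cauchy--Schwarz to get $\gtrsim N^3$ quadruples with $P(a_1,b_1)=P(a_2,b_2)$, reinterpretation as incidences between the point set $B\times B$ and the curves $\gamma_{a_1,a_2}=\{P(a_1,u)=P(a_2,v)\}$, a Pach--Sharir/Szemer\'edi--Trotter-type bound to force a degeneracy, and a classification of the degeneracy via the vanishing of $\partial_{xy}\log(P_x/P_y)$ (equivalently, the Blaschke curvature of the associated $3$-web, exactly the mechanism this paper exploits in its Lemma \ref{special} and Section \ref{auxiliaryFunctionSection}). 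Your sharpness check is also correct.

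However, as written this is a plan rather than a proof, and the two places you flag as ``nontrivial'' are precisely where the entire content of the theorem lives; neither is carried out. (1) The incidence dichotomy: for a special form the curves $\gamma_{a_1,a_2}$ genuinely coincide in large families, so you cannot simply ``apply Pach--Sharir and derive a contradiction''---you must prove a quantitative statement of the form ``either at most $O(1)$ (or $O(N^{1-c})$) pairs $(a_1,a_2)$ yield the same curve and distinct curves share $O(1)$ points, or a specific algebraic identity in $P$ holds identically.'' Making ``fails robustly'' into an identity valid on all of $U$ (not just on a positive-dimensional subvariety of the parameter space) is a genuine argument, handled in \cite{ER,RSZ} by an explicit analysis of when $P(a_1,u)-P(a_2,v)$ factors compatibly across many pairs. (2) The classification step: you invoke hexagonal-web rigidity and one-parameter subgroups of $\mathrm{PGL}_2$, which is the right circle of ideas, but the deduction of the PDE $\partial_{xy}\log(P_x/P_y)\equiv 0$ from the incidence degeneracy, the local integration to $h(a(x)+b(y))$, and especially the promotion from local analytic $h,a,b$ to global \emph{polynomial} $h,a,b$ in one of the two stated forms (cf.\ \cite[Lemma 10]{RaSh}) are all asserted rather than proved. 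So the proposal correctly identifies the known route but does not close either of its two essential steps.
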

If $P(x,y) = h(a(x)+b(y))$ or $P(x,y) = h(a(x)b(y))$, where $h,a,b$ are univariate real polynomials, then we call $P$ a (polynomial) special form. If $P$ is not a (polynomial) special form then we call it an expanding polynomial. This dichotomy has since been generalized in several directions. 
A slightly more general formulation of Theorem~\ref{ElekesRonyai} asserts that for finite sets $A,B,C$ of real numbers, each of cardinality $N$,  the number of points of $A\times B\times C$ that lie on the graph of $P$, that is, on the zero set of the polynomial $F(x,y,z)=z-P(x,y)$, is $o(N^2)$. 
In \cite{ES}, Elekes and Szab\'o proved a generalization of Theorem~\ref{ElekesRonyai} by showing that if $F$ is any trivariate polynomial, then either $Z(F)$ must have a special structure or 
\begin{equation}\label{smallIntersectionZPABC}
\#\big( Z(F)\cap (A \times B \times C)\big)\leq cN^{2-\eps}
\end{equation} 
for all finite sets of real (or complex) numbers $A,B,C$ of cardinality $N$, where here $c>0$ depends on the degree of $F$. In \cite{RSS} and \cite{RSZ}, the first author along with Sharir, Solymosi, and De Zeeuw obtained quantitative improvements in the expansion exponent $\eps$ in \eqref{smallIntersectionZPABC}, and an analogous improvement in \eqref{growthPoly}. A similar result was independently proved by Wang in \cite{W}. Very recently, Makhul,  Roche-Newton, Stevens, and Warren \cite{MRSW} showed that if we assume the uniformity conjecture, then it is possible to further improve the expansion exponent $\eps$ in \eqref{smallIntersectionZPABC} if $A,B,C$ are sets of rational numbers and if the function  $F$ belongs to a particular family of polynomials. In \cite{RS}, Shem-Tov and the first author generalized a quantitative version of Theorem~\ref{ElekesRonyai} to $d$-variate polynomials, while in \cite{BB}, Bays and Breuillard  generalized Elekes and Szab\'o's result (for a small $\eps>0$) to arbitrary varieties in $\CC^d$. 

All of the results discussed so far have concerned cardinality estimates for finite subsets of $\RR$ or $\CC$. 
Similar questions can be asked for metric entropy. In this direction, Katz and Tao proposed the discretized ring conjecture \cite{KT}, which was solved by Bourgain \cite{B03}. The discretized ring conjecture (now a theorem) is similar to the Erd\H{o}s-Szemer\'edi theorem, except cardinality has been replaced by metric entropy---if $(X,d)$ is a metric space and $\delta>0$, we define the ``$\delta$-covering number'' $\mathcal{E}_{\delta}(X)$ to be the minimum number of balls of radius $\delta$ required to cover $X$.
\begin{thm}[Bourgain]\label{discRingThm}
For each $0<\alpha<1$, there is a number $\eps = \eps(\alpha)>0$ and $s = s(\alpha)>0$ so that the following holds for all $\delta>0$ sufficiently small. Let $A\subset [1,2]$ be a set with $\mathcal{E}_{\delta}(A) = \delta^{-\alpha}$. Suppose that $A$ satisfies the following non-concentration condition for each interval $J$ of length at least $\delta$.
\begin{equation}\label{nonConcentrationDiscRingConj}
\mathcal{E}_{\delta}(A \cap J) \leq \delta^{-s} |J|^{\alpha}\mathcal{E}_{\delta}(A).
\end{equation}
Then
\begin{equation*}
 \mathcal{E}_{\delta}(A+A) +  \mathcal{E}_{\delta}(A.A) \geq \mathcal{E}_{\delta}(A)^{1+\eps}.
\end{equation*}
 
\end{thm}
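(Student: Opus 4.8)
This statement is Bourgain's discretized ring theorem, and a complete proof is long; I will sketch the route I would take. The plan is to argue by contradiction. Fix $\alpha\in(0,1)$; I will choose $s=s(\alpha)$ small and then $\eps=\eps(\alpha)$ much smaller. Suppose $A\subseteq[1,2]$ has $\mathcal{E}_\delta(A)=\delta^{-\alpha}$, satisfies the non-concentration condition \eqref{nonConcentrationDiscRingConj}, but $\mathcal{E}_\delta(A+A)+\mathcal{E}_\delta(A\cdot A)\le K\,\mathcal{E}_\delta(A)$ with $K=\delta^{-\eps}$. First I would move to Lebesgue measure by thickening: set $A_\delta=A+(-\delta,\delta)$, so $|A_\delta|\approx\delta\,\mathcal{E}_\delta(A)$, and since $A\subset[1,2]$ one has $|A_\delta+A_\delta|\lesssim K|A_\delta|$ and $|A_\delta\cdot A_\delta|\lesssim K|A_\delta|$. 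Next, by dyadic pigeonholing I would replace $A$ by a comparable subset whose multiscale branching structure is well behaved (a uniform set); the hypothesis \eqref{nonConcentrationDiscRingConj} is what keeps the losses here down to a factor $\delta^{-O(s)}$, so the resulting set still has $\delta$-covering number $\ge\delta^{-\alpha+O(s)}$ and a Frostman-type condition at every scale. Abusing notation, continue to call this set $A$.

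Now I would convert the doubling hypotheses into structural information via Ruzsa calculus. Because $\log$ is bi-Lipschitz on $[1,2]$, a small product set for $A_\delta$ is a small sum set for $\log A_\delta\subseteq[0,\log 2]$, and $|\log A_\delta|\approx|A_\delta|$ with the Frostman condition preserved; so the Pl\"unnecke--Ruzsa inequalities (valid for Lebesgue measure, up to losses polynomial in $K$) give, for each fixed $\ell$,
\[
\mathcal{E}_\delta(\ell A-\ell A)\le K^{O(\ell)}\mathcal{E}_\delta(A)
\qquad\text{and}\qquad
\mathcal{E}_\delta\big(\underbrace{(A/A)\cdots(A/A)}_{\ell}\big)\le K^{O(\ell)}\mathcal{E}_\delta(A).
\]
In other words, $A$ now behaves like a $\delta$-approximate subring of $\R$ of ``dimension'' $\alpha$ with full non-concentration, and the entire content of the theorem is that this cannot happen when $0<\alpha<1$.

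To force the contradiction I would run Elekes' incidence argument in discretized form. Consider the family of $\approx\mathcal{E}_\delta(A)^{2}$ lines $\ell_{a,b}\colon y=b(x-a)$ indexed by $(a,b)\in A\times A$, and the point set $P=(A+A)\times(A\cdot A)$, which satisfies $\mathcal{E}_\delta(P)\lesssim K^{2}\mathcal{E}_\delta(A)^{2}$; each $\ell_{a,b}$ passes within $O(\delta)$ of the $\ge\mathcal{E}_\delta(A)$ points $(a+c,bc)$, $c\in A$. The Frostman condition on $A$ together with the iterated sum/product/quotient bounds above is exactly what is needed to verify the Katz--Tao (Wolff-type) non-concentration axioms for both the $\delta$-balls of $P$ and the $\delta$-tubes around the $\ell_{a,b}$: no $\delta$-ball carries too many points, and --- the more delicate half --- no $\delta$-tube does either, since a $\delta$-tube meeting many points of $(A+A)\times(A\cdot A)$ would encode a nontrivial additive--multiplicative coincidence inside $A$ that the Frostman and Pl\"unnecke bounds rule out. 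Granting the discretized Szemer\'edi--Trotter estimate (which holds with a $\delta^{-O(\eps)}$ loss under these axioms), the incidence count is
\[
\lesssim\delta^{-O(\eps)}\Big(\big(K^{2}\mathcal{E}_\delta(A)^{2}\big)^{2/3}\big(\mathcal{E}_\delta(A)^{2}\big)^{2/3}+K^{2}\mathcal{E}_\delta(A)^{2}\Big),
\]
while the configuration forces it to be $\gtrsim\mathcal{E}_\delta(A)^{3}$; rearranging gives $\mathcal{E}_\delta(A)\lesssim\delta^{-O(\eps)}$, which contradicts $\mathcal{E}_\delta(A)\ge\delta^{-\alpha+O(s)}$ once $s$ and $\eps$ are chosen small enough in terms of $\alpha$. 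This is the desired contradiction.

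The hard part is the discretized incidence input. One needs a Szemer\'edi--Trotter bound for $\delta$-tubes and $\delta$-balls that loses only $\delta^{-O(\eps)}$, and --- the real obstacle --- one must show that the non-concentration (Wolff) axioms for the point set $(A+A)\times(A\cdot A)$ and for the dual point set $\{(b,ab)\colon a,b\in A\}$ genuinely follow from the one-dimensional Frostman condition on $A$ after it has been pushed through sums, products and quotients; bounding the number of points of $(A+A)\times(A\cdot A)$ in the $\delta$-neighborhood of an arbitrary line is essentially a structure theorem for $\delta$-approximate rings, and this is where essentially all of the work lies. An alternative that sidesteps the incidence machinery is Bourgain's original self-improvement argument, which instead shows directly that a $\delta$-approximate ring of dimension $\alpha<1$ with non-concentration would generate a set of strictly larger dimension, contradicting $\alpha<1$.
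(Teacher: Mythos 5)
This theorem is not proved in the paper at all: it is quoted as background and attributed to Bourgain \cite{B03}, with \cite{GKZ} cited for a simpler proof, so there is no in-paper argument to compare yours against. Judged on its own terms, your proposal is a reasonable roadmap for the Elekes-style incidence route (essentially the strategy of \cite{GKZ}), but it is not a proof: the two load-bearing steps are both left as black boxes. First, you invoke a ``discretized Szemer\'edi--Trotter estimate'' with only a $\delta^{-O(\eps)}$ loss; such an estimate is simply false without strong non-concentration hypotheses on both the $\delta$-balls and the $\delta$-tubes (consider a bush of tubes through a common ball, or a train-track configuration), so everything hinges on which axioms you assume and whether they can be verified here. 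Second, the verification of those axioms for the point set $(A+A)\times(A\cdot A)$ and the tube family $\{\ell_{a,b}\}_{(a,b)\in A\times A}$ is exactly the content of the theorem --- you say so yourself (``this is where essentially all of the work lies'') --- and the Pl\"unnecke--Ruzsa and Frostman inputs you list do not by themselves yield it. In particular, the claim that a $\delta$-tube meeting many points of $(A+A)\times(A\cdot A)$ ``encodes a coincidence that the Frostman and Pl\"unnecke bounds rule out'' is an assertion, not an argument.

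So the honest assessment is: the skeleton is right and matches a known proof strategy, but the proposal defers the entire mathematical difficulty to unproved lemmas. As a self-contained proof it has a genuine gap; as a description of how the literature proves the theorem it is accurate. If you want to make this rigorous, the cleanest path is to follow \cite{GKZ}, where the required incidence statement and the verification of its hypotheses for this specific configuration are carried out explicitly (via a multi-scale/induction-on-scales argument rather than a direct appeal to a generic discretized Szemer\'edi--Trotter theorem).
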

The non-concentration condition \eqref{nonConcentrationDiscRingConj} arises naturally when discretizing fractal sets; see \cite{KT} or Section \ref{dimExpansionSec} for details. In particular, Bourgain used Theorem \ref{nonConcentrationDiscRingConj} to prove the following ``dimension expansion'' result for subsets of $\RR$:
\begin{cor}[Bourgain]\label{growthInRings}
For each $0<\alpha<1$ there is a number $\eps=\eps(\alpha)>0$ so that the following holds. Let $A\subset\RR$ be a Borel set with $\dim A = \alpha$. Then 
\begin{equation*}
\max\big(\dim (A+A),\ \dim(A.A)\big)\geq\alpha+\eps.
\end{equation*}
\end{cor}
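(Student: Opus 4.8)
The plan is to deduce Corollary~\ref{growthInRings} from the single-scale Theorem~\ref{discRingThm} by discretizing the fractal via Frostman's lemma and then running a multiscale iteration to recover a Hausdorff-dimension statement. First I would make the standard reductions. Hausdorff dimension is countably stable, and neither $\dim(A+A)$ nor $\dim(A\cdot A)$ changes if $A$ is replaced by $\lambda A$ for $\lambda\ne 0$ or by $-A$. Covering $A$ by countably many dyadic intervals bounded away from $0$ (discarding the point $0$), passing to a piece realizing the dimension, and rescaling, we reduce to the case that $A\subset[1,2]$ is compact with $\dim A=\alpha$ (the reductions cost an arbitrarily small amount of dimension, which is harmless since $\alpha_0$ below will be sent up to $\alpha$ at the end). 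By Frostman's lemma, for every $\alpha_0<\alpha$ there is a probability measure $\mu$ supported on $A$ with $\mu(B(x,r))\le C_{\alpha_0}\,r^{\alpha_0}$ for all $x,r$.

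Next I would extract the single-scale input for Theorem~\ref{discRingThm}. Fix a small dyadic $\delta$, partition $[1,2]$ into $\delta$-intervals, and dyadically pigeonhole their $\mu$-masses: there is a value $2^{-j}\lesssim\delta^{\alpha_0}$ so that the $\delta$-intervals of mass $\asymp 2^{-j}$ carry $\mu$-mass $\gtrsim 1/\log(1/\delta)$. Taking their union and then passing to a random subcollection to adjust the cardinality yields a set $A_\delta\subset[1,2]$, a union of $\delta$-intervals, with $\mathcal{E}_{\delta}(A_\delta)=\delta^{-\alpha_0}$ (after absorbing $\delta^{-o(1)}$ factors); the Frostman bound for $\mu$ forces the non-concentration condition~\eqref{nonConcentrationDiscRingConj} for $A_\delta$ with any fixed $s>0$, a Chernoff bound over the $O(\delta^{-1}\log(1/\delta))$ relevant intervals $J$ making it hold uniformly. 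Theorem~\ref{discRingThm} applied to $A_\delta$ gives $\mathcal{E}_{\delta}(A_\delta+A_\delta)+\mathcal{E}_{\delta}(A_\delta\cdot A_\delta)\ge\delta^{-\alpha_0(1+\eps)}$ with $\eps=\eps(\alpha_0)>0$, and since $A_\delta$ lies in the $\delta$-neighborhood of $A$ this transfers to $\mathcal{E}_{\delta}(A+A)+\mathcal{E}_{\delta}(A\cdot A)\ge c\,\delta^{-\alpha_0(1+\eps)}$. Thus at every small scale one of $A+A$, $A\cdot A$ has $\delta$-covering number at least $\delta^{-\alpha_0-\alpha_0\eps}$.

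The main obstacle is that this is only a single-scale, box-counting statement: the favored operation may vary with $\delta$, and large $\delta$-covering numbers do not by themselves bound Hausdorff dimension. To fix this I would run the argument multiscale. Fix a large integer $M$, write $\delta=2^{-NM}$, and apply Theorem~\ref{discRingThm} at the fixed scale $2^{-M}$ to the successive conditional measures of $\mu$ on $2^{-iM}$-intervals ($i=0,\dots,N-1$), each of which is still $\alpha_0$-Frostman with constants controlled by $C_{\alpha_0}$. At every step one of $+$, $\cdot$ gains a factor $2^{M\eps}$ beyond the trivial $2^{M\alpha_0}$; pigeonholing over $i$ fixes a single operation, say $+$, on at least half the steps; accumulating the gains multiplicatively, and — crucially — carrying along a portion of $\mu\otimes\mu$ together with its pushforward under $+$, one obtains a measure supported on a subset of $A+A$ whose Frostman exponent exceeds $\alpha_0$ by a fixed amount, i.e. $\dim(A+A)\ge\alpha_0+\eps'(\alpha_0)$ (or the analogous bound for $A\cdot A$). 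Letting $\alpha_0\uparrow\alpha$ and using that $\eps(\alpha_0)$ in Theorem~\ref{discRingThm} may be taken bounded below for $\alpha_0$ near $\alpha$ (a routine uniformization), this gives $\max\big(\dim(A+A),\dim(A\cdot A)\big)\ge\alpha+\eps$ for some $\eps=\eps(\alpha)>0$. I expect the technical heart of the proof to be exactly this bookkeeping: verifying that the conditional measures inherit the non-concentration hypothesis at each scale, and arranging the telescoping so that the output is a genuine Frostman measure on a subset of one of $A+A$, $A\cdot A$ (hence a Hausdorff-dimension bound) rather than a mere box-counting statement.
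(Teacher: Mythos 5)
A preliminary remark: the paper does not prove Corollary~\ref{growthInRings}; it is cited to Bourgain. The closest in-paper analogue is the proof of Theorem~\ref{dimension-expander} in Section~\ref{dimExpansionSec}, which takes a different route from yours: it argues by contradiction, covers the image set efficiently using the assumed dimension bound, pigeonholes to a \emph{single} scale $\delta$ carrying most of the pushforward measure, and contradicts a single-scale \emph{energy} upper bound (Lemma~\ref{countingNumberQuadruplesNablePWedgeP}) with a Cauchy--Schwarz lower bound. That strategy never needs to assemble information across scales, which is exactly where your proposal runs into trouble.

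The gap is in your multiscale step, which you describe as bookkeeping but which is the entire difficulty. First, the hypothesis of Theorem~\ref{discRingThm} is not just non-concentration but a \emph{lower} bound $\mathcal{E}_{2^{-M}}\ge 2^{M\alpha_0}$ on the branching of each rescaled conditional measure; a Frostman bound $\mu(B(x,r))\le Cr^{\alpha_0}$ gives the upper (non-concentration) half but says nothing about the branching of $\mu_I$ inside a $2^{-iM}$-interval $I$ whose mass is much smaller than $2^{-iM\alpha_0}$, and such intervals can carry a large fraction of $\mu$ in aggregate. At those steps the single-scale theorem yields no gain, and handling them requires a regularization/uniformization of the measure that you have not supplied. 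Second, and more fundamentally, Theorem~\ref{discRingThm} is a covering-number statement about sets, not a statement about how $\mu\ast\mu$ distributes mass, so there is no mechanism by which per-scale covering gains for conditional pieces ``accumulate multiplicatively'' into a Frostman (or lower local dimension) bound for a measure on $A+A$: the images of different interval pairs under $+$ overlap, the favored operation can vary with the interval $I$ and not just with the step $i$ (so pigeonholing over $i$ alone does not fix one operation coherently), and even a successful telescoping of this kind naturally produces an entropy lower bound, which does not by itself bound Hausdorff dimension from below. Making this work requires the conditional-entropy chain rule and an entropy version of the single-scale input, i.e.\ genuinely new technology beyond what you invoke. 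A cleaner repair is to abandon the iteration and mimic Section~\ref{dimExpansionSec}: assume both $\dim(A+A)$ and $\dim(A\cdot A)$ are small, use the resulting efficient coverings and the Frostman measure to produce, at a single pigeonholed scale, lower bounds on the additive and multiplicative energies of a discretization of $A$, and contradict the single-scale sum--product theorem (in its energy form) there.
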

Corollary~\ref{growthInRings} solves a quantitative version of the Erd\H{o}s-Volkmann ring conjecture \cite{EV} (proved by Edgar and Miller \cite{EM} slightly earlier), which asserts that there does not exist a proper Borel subring of the reals with positive Hausdorff dimension. 

Theorem \ref{discRingThm} has seen a number of extensions and generalizations. In \cite{BG}, Bourgain and Gamburd proved a variant of \ref{discRingThm} with a less restrictive version of the non-concentration condition \eqref{nonConcentrationDiscRingConj}, while in \cite{GKZ} Guth, Katz, and the second author found a simple new proof of Theorem \ref{discRingThm} that yields an explicit lower bound on the exponent $\eps$. Theorem \ref{discRingThm} has also been generalized to $SU(2)$ \cite{BG} as well as other settings \cite{BG2, H, HS, S}.

\subsection{New results: Expanding polynomials and analytic functions}
Our main result is a discretized version of the Elekes-R\'onyai theorem for analytic functions, in the spirit of Theorem \ref{discRingThm}. Before stating our result, we will define what it means for an analytic function to be a special form (cf. \cite[Lemma 10]{RaSh}). 
\begin{defn}\label{defnAnalyticSpecialForm}
Let $P(x,y)$ be analytic on a connected open set $U\subset\RR^2$. We say $P$ is an (analytic) special form if on each connected region of $U \backslash \big(\{ P_x = 0\}\cup \{P_y = 0\}\big)$, there are univariate real analytic functions $h$, $a$, and $b$ so that $P(x,y) = h(a(x)+b(y))$.
\end{defn}
Note that every polynomial special form is also an analytic special form; this is obvious if $P(x,y) = h(a(x) + b(y))$ for $h,a,b$ polynomial. If $P(x,y) = h(a(x)b(y))$, then on each open connected region of $\RR^2\backslash \big(\{ P_x = 0\}\cup\{P_y = 0\}\big)$ we have $a(x)b(y)$ is strictly positive or strictly negative, and thus for an appropriate choice of sign we can write $P(x,y) = h(\pm \exp(\log |a(x)| +  \log |b(y)|))$. 

We are now ready to state our main result. 

\begin{thm}\label{main-entropy-growth}
Let $0< \kappa\leq \alpha< 1$, let $U\subset\RR^2$ be a connected open set that contains $[0,1]^2$, and let $P\colon U \to\RR$ be analytic (resp.~polynomial). Then either $P$ is an analytic (resp.~polynomial) special form, or there exists $\eps = \eps(\alpha,\kappa)>0$ and $\eta = \eta(\alpha,\kappa,P)>0$ so that the following is true for all $\delta>0$ sufficiently small.

Let $A,B\subset[0,1]$ be sets with $\mathcal{E}_{\delta}(A)\geq\delta^{-\alpha}$, $\mathcal{E}_{\delta}(B)\geq\delta^{-\alpha},$ and suppose that for all intervals $J$ of length at least $\delta$, $A$ and $B$ satisfy the non-concentration conditions
\begin{equation}\label{nonConcentrationCond2}
\begin{split}
\mathcal{E}_{\delta}(A \cap J) & \le  |J|^\kappa\delta^{-\alpha-\eta},\\
\mathcal{E}_{\delta}(B \cap J) & \le  |J|^\kappa\delta^{-\alpha-\eta}.
\end{split}
\end{equation}
Then we have the entropy growth estimate
\begin{equation}\label{entropyGrowth}
\mathcal{E}_{\delta}(P(A,B))\geq \delta^{-\alpha-\eps}.
\end{equation}
If $P$ is a polynomial, then the quantity $\eta$ can be taken to depend only on $\alpha,\kappa$, and $\deg P$.
\end{thm}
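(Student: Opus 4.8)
\emph{Step 1 (localization to a nondegenerate region).} Since $P$ is not an analytic (resp.\ polynomial) special form, integrating the separated factors of $P_x/P_y$ shows that the function $W:=\partial_x\partial_y\log|P_x/P_y|$ --- which up to a nonvanishing factor is the Blaschke curvature of the $3$-web cut out by $\{x=\mathrm{const}\}$, $\{y=\mathrm{const}\}$, $\{P=\mathrm{const}\}$ --- does not vanish identically on any open subset of $U$. Let $Z$ be the union of $\{P_x=0\}$, $\{P_y=0\}$, $\{W=0\}$ and the polar set of $W$; this is a proper real-analytic subvariety of a neighborhood of $[0,1]^2$, of degree $O(\deg P)$ in the polynomial case. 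Taking $J=[0,1]$ in \eqref{nonConcentrationCond2} gives $\mathcal{E}_{\delta}(A),\mathcal{E}_{\delta}(B)\le\delta^{-\alpha-\eta}$, so $A,B$ have entropy $\delta^{-\alpha}$ up to a factor $\delta^{-\eta}$. The finitely many axis-parallel lines contained in $Z$ meet the axes in $O_P(1)$ points (in $O(\deg P)$ points in the polynomial case), and \eqref{nonConcentrationCond2} lets us discard the parts of $A$ and $B$ within $\delta^{O(\eta)}$ of those points at the cost of a factor $\delta^{-O(\eta)}$; the remaining, genuinely curved part of $Z$ meets at most $O_P(\mathcal{E}_{\delta}(A)+\mathcal{E}_{\delta}(B))\ll\delta^{-2\alpha}$ of the $\delta$-squares of $A\times B$. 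After a routine refinement we may thus assume there are $A'\subseteq A$ with $\mathcal{E}_{\delta}(A')\gtrsim\delta^{-\alpha}$ and, for each $a\in A'$, sets $B'_a\subseteq B$ with $\mathcal{E}_{\delta}(B'_a)\gtrsim\delta^{-\alpha}$ (all non-concentrated with the exponents of $A,B$), on which $|P_x|,|P_y|,|W|\ge\delta^{O(\eta)}$; here we invoke a {\L}ojasiewicz-type lower bound for $W$, effective in the degree for polynomials. We will choose $\eta\ll\eps$ at the end, so every factor $\delta^{\pm O(\eta)}$ below is absorbed into implied constants or a harmless adjustment of $\eps$; with that understood we write clean powers of $\delta$ from now on, and it suffices to prove \eqref{entropyGrowth} for these refinements.

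\emph{Step 2 (Elekes--R\'onyai pivoting).} Assume toward a contradiction that $\mathcal{E}_{\delta}(C)\le\delta^{-\alpha-\eps}$, $C:=P(A,B)$. For each fixed $a\in A'$ the map $y\mapsto P(a,y)$ is bi-Lipschitz on $B'_a$ with constants in $[\delta^{O(\eta)},O_P(1)]$, hence $O_P(1)$-to-$O_P(1)$ at scale $\delta$. Counting $\delta$-squares and applying Cauchy--Schwarz in the fibres of $P$ over $C$,
\begin{equation*}
\sum_{a,a'\in A'}\#\bigl\{(b,b')\in B'_a\times B'_{a'}:|P(a,b)-P(a',b')|\le\delta\bigr\}\;\gtrsim\;\frac{\bigl(\textstyle\sum_{a\in A'}\mathcal{E}_{\delta}(B'_a)\bigr)^2}{\mathcal{E}_{\delta}(C)}\;\gtrsim\;\delta^{-3\alpha+\eps},
\end{equation*}
where $\#$ counts $\delta$-squares. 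Since the inner count is always $\lesssim\mathcal{E}_{\delta}(B)\lesssim\delta^{-\alpha}$, a dyadic pigeonhole yields $G\subseteq A'\times A'$ with $\#G\gtrsim\delta^{-2\alpha+\eps}$ such that for every $(a,a')\in G$ there is $B_{a,a'}\subseteq B'_a$ with $\mathcal{E}_{\delta}(B_{a,a'})\gtrsim\delta^{\eps}\mathcal{E}_{\delta}(B)$ and, for each $b\in B_{a,a'}$, a point of $B'_{a'}$ within $\delta$ of the implicit-function solution $\psi_{a,a'}(b)$ of $P(a',\,\cdot\,)=P(a,b)$. In other words the analytic diffeomorphism $\psi_{a,a'}$ sends a $\delta^{\eps}$-dense subset of $B$ into the $\delta$-neighborhood of $B$. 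Pigeonholing in the first coordinate of $G$ and using $\mathcal{E}_{\delta}(A')\lesssim\delta^{-\alpha}$, fix $a_0$ with $A_0:=\{a':(a_0,a')\in G\}$ satisfying $\mathcal{E}_{\delta}(A_0)\gtrsim\delta^{-\alpha+\eps}$; note $A_0$ inherits the non-concentration of $A$.

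\emph{Step 3 (reduction to a discretized Elekes--Szab\'o estimate).} We have produced a one-parameter family of analytic diffeomorphisms $(a',y)\mapsto\psi_{a_0,a'}(y)$, each member approximately preserving the non-concentrated $\alpha$-dimensional set $B$, with $a'$ ranging over the non-concentrated $\alpha$-dimensional set $A_0$; equivalently, at least $\delta^{-2\alpha+\eps}$ of the $\delta$-cubes of $B\times B\times A_0$ lie within $\delta$ of the analytic surface $R:=\{(y,y',t):P(t,y')=P(a_0,y)\}$. Reparametrizing the first coordinate by the diffeomorphism $y\mapsto P(a_0,y)$ identifies $R$ with the graph surface $\{u=P(t,y')\}$; by the Elekes--Szab\'o correspondence this surface carries a group structure precisely when $P$ is an analytic special form ($h(a+b)$, or $h(ab)$ via $\exp$), which by hypothesis it is not --- equivalently, $W\not\equiv0$ on our region. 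The theorem then follows from a \emph{discretized Elekes--Szab\'o estimate}: for a fixed analytic surface with no group structure (equivalently, nonvanishing Blaschke curvature) and non-concentrated sets of entropy $\approx\delta^{-\alpha}$ with $0<\kappa\le\alpha<1$, the number of $\delta$-cubes within $\delta$ of the surface is $\le\delta^{-(2-\eps_0)\alpha}$ for some $\eps_0=\eps_0(\alpha,\kappa)>0$. Choosing $\eta\ll\eps<\tfrac12\alpha\eps_0$ contradicts Step~2, proving \eqref{entropyGrowth} with $\eps=\eps(\alpha,\kappa)$ and $\eta=\eta(\alpha,\kappa,P)$; in the polynomial case every ingredient of Step~1 --- the degree and {\L}ojasiewicz exponent of $W$, the number of exceptional lines, the number of bad scales --- is controlled by $\deg P$, giving $\eta=\eta(\alpha,\kappa,\deg P)$.

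\emph{The main obstacle.} The heart of the matter, and the only nonroutine step, is the discretized Elekes--Szab\'o estimate of Step~3 and, above all, the passage to it from the bare condition $W\not\equiv0$. The expected mechanism is the familiar one in the discretized setting: the nonvanishing curvature of $R$ --- i.e.\ the fact that the family $\{\psi_{a_0,a'}\}$ is not, in any local analytic coordinates, a family of translations --- lets one manufacture, through compositions $\psi_{a_0,a'_2}^{-1}\circ\psi_{a_0,a'_1}$, a genuinely curved \emph{two}-parameter family of near-symmetries of $B$ (the non-concentration of $A_0$ is what makes this family nondegenerate), after which a Balog--Szemer\'edi--Gowers argument together with Bourgain's discretized ring theorem (Theorem~\ref{discRingThm}), or the Guth--Katz--Zahl proof of it, forces $\mathcal{E}_{\delta}(B)\ge\delta^{-1+o(1)}$, contradicting $\alpha<1$ and the bound $\mathcal{E}_{\delta}(B)\le\delta^{-\alpha-\eta}$. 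Making this precise requires (i) upgrading the infinitesimal condition $W\ne0$ to a robust single-scale lower bound for $|W|$ off a $\delta^{O(\eta)}$-neighborhood of its zero set; (ii) propagating this curvature through the implicit-function construction, so that the partial diffeomorphisms $\psi_{a_0,a'}$ --- defined only up to $\delta$ and on $\delta^{\eps}$-dense subsets --- still detect it at every scale between $\delta$ and a small constant, which appears to need the localization of Step~1 iterated across scales; and (iii) running the resulting multiscale argument while tracking the accumulated $\delta^{-O(\eta)}$ and $\delta^{O(\eps)}$ losses, so that a single admissible pair $(\eps,\eta)$ closes the loop with $\eps$ depending only on $\alpha,\kappa$.
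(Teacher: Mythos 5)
Your Steps 1--2 are sound and mirror the paper's outer structure: localize away from the degeneracy locus, assume $\mathcal{E}_\delta(P(A,B))$ is small, and run Cauchy--Schwarz plus pigeonholing to produce $\gtrsim\delta^{-3\alpha+O(\eps)}$ approximate solutions of $P(x,y)=P(x',y')$ supported on non-concentrated sets. But the proof has a genuine gap: everything then rests on the ``discretized Elekes--Szab\'o estimate'' of Step 3, which you do not prove --- you explicitly label it the main obstacle and offer only a heuristic mechanism (compositions $\psi_{a_0,a'_2}^{-1}\circ\psi_{a_0,a'_1}$ of near-symmetries, Balog--Szemer\'edi--Gowers, and Bourgain's ring theorem). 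That estimate \emph{is} the theorem; in the paper it is Lemma \ref{countingNumberQuadruplesNablePWedgeP}, whose proof occupies Sections \ref{analFunctNonConcSec}--\ref{auxiliaryFunctionSection} and whose engine is Shmerkin's nonlinear projection theorem (Theorem \ref{Shm}), applied after writing the energy surface as a graph $y'=G(x,x',y)$ and verifying the transversality condition via the auxiliary function $H_F$. Nothing in your sketch substitutes for this. Moreover, the route you propose is precisely the one the paper's first remark warns against: a BSG-plus-sum-product argument naturally yields an exponent depending on $P$ (on $\deg P$, the number of terms, etc.), whereas the theorem claims $\eps=\eps(\alpha,\kappa)$ independent of $P$. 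Your sketch gives no reason why the losses accumulated in manufacturing and iterating the two-parameter family of near-symmetries would be uniform in $P$.

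Two further points where your Step 1 is too weak for the quantitative argument to go through. First, a lower bound $|P_x|,|P_y|\ge\delta^{O(\eta)}$ off a thin neighborhood of $Z$ is not enough: the rescaling needed to apply any projection-type theorem (cf.\ Lemma \ref{Shmcor}) requires these quantities to be \emph{roughly constant} (within a bounded factor) on each piece, which is why the paper runs the Whitney-type decomposition of Lemma \ref{cubesWhereAnalyticFLarge} with the level sets $\{f_j=t_{j,\ell}\}$ and not merely the zero sets. Second, the relevant curvature quantity for the single-scale argument is not $W=\partial_{xy}\log|P_x/P_y|$ directly but $K_P=\nabla P\wedge\nabla(P_xP_yP_{xy}^{-1})$: the lower bound on $|K_P|$ is what controls the variation of $P_xP_y/P_{xy}$ along level curves of $P$, which is needed to dispose of the quadruples where the transversality function $H_F$ is small --- a case your argument never isolates. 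The identity \eqref{relateKpLog} relates the two, but the extra factors of $P_{xy}$ and $(P_xP_y)^2$ must themselves be bounded below on the good cubes, which is part of what Lemma \ref{cubesWhereAnalyticFLarge} arranges.
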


\begin{rem}
Note that the entropy growth exponent $\eps$ in Theorem \ref{main-entropy-growth} is independent of $P$, and in particular independent of the degree of $P$ when $P$ is a polynomial. This is interesting for the following reasons. First, if $P$ is a polynomial then it might be possible (though likely difficult) to prove an analogue of Theorem \ref{main-entropy-growth} using Bourgain's discretized sum-product theorem (Theorem \ref{discRingThm}) and tools from Ruzsa calculus. For example, these ideas were used in \cite{BKT} to prove expansion over $\mathbb{F}_p$ for a certain explicitly specified polynomial.  However, the entropy growth exponent $\eps$ arising from such a strategy would necessarily depend on the polynomial $P$ (the degree of $P$, the number of monomial terms, etc).

Second, there is a general intuition in additive combinatorics that states that one should expect a quantity such as $P(A,B)$ to be large when there are few solutions to $P(x,y) = P(x^\prime,y^\prime)$, and conversely. Concretely, if $A$ and $B$ have cardinality $\delta^{-\alpha}$, then the statements ``there are at about $\delta^{-3\alpha+\eps}$ solutions to $P(x,y) = P(x^\prime,y^\prime)$'' and ``$P(A,B)$ has cardinality about $\delta^{-\alpha-\eps}$'' are often morally equivalent. In the present setting, however, this intuition is not entirely correct---there exists a sequence of polynomials $P_1,P_2,\ldots$ of increasing degree so that there are $\delta^{-3\alpha+o(1)}$ solutions to $P_j(x,y) = P_j(x^\prime,y^\prime)$, and yet the $\delta$-covering number of $P_j(A,B)$ remains essentially unchanged (see Example~\ref{epsDependsDForEnergyDispersion} in Section \ref{examplesSec}). This will be discussed further in Theorem \ref{main-energy-dispersion}. 
\end{rem}

We will also prove the following ``dimension expansion'' version of Theorem \ref{main-entropy-growth}, which is an analogue of Corollary \ref{growthInRings}. 
\begin{thm}\label{dimension-expander}
Let  $U\subset\RR^2$  be a connected open set that contains $[0,1]^2$ and let $P\colon U \to\RR$ be analytic (resp.~polynomial). Then either $P$ is an analytic (resp.~polynomial) special form, or for every $0< \alpha< 1$, there exists $\eps=\eps(\alpha)>0$ so that the following holds. Let $A,B\subset [0,1]$ be Borel sets with Hausdorff dimension at least $\alpha$. Then 
\begin{equation}\label{dimensionGrowth}
\dim P(A,B) \ge \alpha+\eps.
\end{equation}  
\end{thm}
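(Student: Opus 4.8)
The plan is to derive Theorem~\ref{dimension-expander} from the single-scale Theorem~\ref{main-entropy-growth} by a standard discretization argument, passing from Hausdorff dimension to $\delta$-covering numbers via Frostman-type measures. First I would fix $0<\alpha<1$ and Borel sets $A,B\subset[0,1]$ with $\dim A,\dim B\ge\alpha$; by passing to subsets it suffices to treat the case $\dim A=\dim B=\alpha$ (or slightly below, say $\alpha'=\alpha-\text{small}$, to have room). I want to produce a single scale $\delta$ at which $A$ and $B$ look like $\delta^{-\alpha'}$-sized sets satisfying the non-concentration condition \eqref{nonConcentrationCond2}, because $P(A,B)\supset P(A_\delta,B_\delta)$ for $\delta$-discretizations $A_\delta,B_\delta$, and an entropy bound $\mathcal{E}_\delta(P(A,B))\ge\delta^{-\alpha'-\eps}$ at a single scale already forces $\dim P(A,B)\ge\alpha'+\eps$. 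Subtracting off the error $\alpha-\alpha'$ and shrinking $\eps$ gives \eqref{dimensionGrowth}.

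The extraction of a good single scale is the technical heart. By Frostman's lemma, since $\dim A\ge\alpha$, for every $\alpha'<\alpha$ there is a probability measure $\mu$ supported on $A$ with $\mu(J)\lesssim |J|^{\alpha'}$ for all intervals $J$; likewise $\nu$ on $B$. For a dyadic $\delta$, let $A_\delta$ be a maximal $\delta$-separated subset of $\mathrm{supp}\,\mu$; then $\mathcal{E}_\delta(A_\delta)\gtrsim \delta^{-\alpha'}$ cannot fail for all small $\delta$ (else $\dim A\le\alpha'$), so we may choose a sequence $\delta\to 0$ with $\mathcal{E}_\delta(A)\ge\delta^{-\alpha'}$, and similarly for $B$; intersecting the two sequences gives common scales. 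The Frostman bound $\mu(J)\lesssim|J|^{\alpha'}$ immediately gives $\mathcal{E}_\delta(A\cap J)\lesssim |J|^{\alpha'}\delta^{-\alpha'}$, which is comfortably stronger than \eqref{nonConcentrationCond2} with $\kappa=\alpha'$ and any $\eta>0$, once $\delta$ is small enough that the implicit constant is absorbed into $\delta^{-\eta}$. Thus Theorem~\ref{main-entropy-growth}, applied with parameters $(\alpha',\kappa)=(\alpha',\alpha')$, is applicable at each of these scales and yields $\mathcal{E}_\delta(P(A,B))\ge\mathcal{E}_\delta(P(A_\delta,B_\delta))\ge\delta^{-\alpha'-\eps}$ with $\eps=\eps(\alpha',\alpha')>0$.

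Finally, to convert this single-scale estimate into a Hausdorff dimension lower bound I would use the standard fact that if $\mathcal{E}_\delta(X)\ge\delta^{-\beta}$ holds along a sequence $\delta\to 0$ then $\dim X\ge\beta$ (indeed $X$ cannot be covered by intervals whose lengths sum appropriately in dimension $<\beta$); alternatively, push $\mu\times\nu$ forward under $P$ and check a Frostman condition for the image, but the covering-number route is cleaner and only needs the estimate along one sequence of scales. Taking $X=P(A,B)$ and $\beta=\alpha'+\eps(\alpha',\alpha')$ gives $\dim P(A,B)\ge\alpha'+\eps(\alpha',\alpha')$; letting $\alpha'\uparrow\alpha$ and noting $\eps(\cdot,\cdot)$ can be taken bounded below on a neighborhood of $(\alpha,\alpha)$ (or simply fixing one $\alpha'$ close to $\alpha$) produces a final $\eps(\alpha)>0$ with $\dim P(A,B)\ge\alpha+\eps(\alpha)$.

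The main obstacle I anticipate is not any single deep step but the bookkeeping in the first two paragraphs: ensuring the Frostman exponent $\alpha'$, the loss $\alpha-\alpha'$, the non-concentration slack $\eta=\eta(\alpha',\alpha',P)$ (which depends on $P$, but that is harmless here since $\eps$ in \eqref{dimensionGrowth} is allowed to depend on $P$ through nothing—wait, it may not—so one must check $\eta$ enters only as a threshold on $\delta$, not in the exponent, which is exactly the content of Theorem~\ref{main-entropy-growth}), and the implicit Frostman constants are all juggled consistently so that \eqref{nonConcentrationCond2} genuinely holds for all sufficiently small $\delta$ in our sequence. One subtlety worth flagging: the non-concentration condition must hold for \emph{all} intervals $J$ of length $\ge\delta$ simultaneously, but the Frostman bound delivers exactly this uniformly, so no union bound or pigeonholing over scales inside $[\delta,1]$ is needed—this is why working from a Frostman measure rather than from raw covering numbers is the right move.
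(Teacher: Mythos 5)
Your reduction has two genuine gaps, and the second one is fatal to the overall strategy.

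First, the claim that the Frostman bound $\mu(J)\lesssim|J|^{\alpha'}$ ``immediately gives'' $\mathcal{E}_\delta(A\cap J)\lesssim|J|^{\alpha'}\delta^{-\alpha'}$ is false: a Frostman measure controls the \emph{measure} of intervals, not the covering number of $A\cap J$. Take $A=[0,1]$ (which has dimension $\ge\alpha$) with $\mu$ Lebesgue; then $\mathcal{E}_\delta(A\cap J)=|J|\delta^{-1}$, which violates your bound for every $|J|>\delta$. Note also that \eqref{nonConcentrationCond2} with $J=[0,1]$ forces the \emph{upper} bound $\mathcal{E}_\delta(A)\le\delta^{-\alpha-\eta}$, so you cannot feed $A$ itself into Theorem~\ref{main-entropy-growth}; you must extract, at each scale $\delta$, a sub-collection of $\delta$-intervals of cardinality $\approx\delta^{-\alpha}$ on which $\mu$ is roughly uniformly distributed. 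This uniformization is a real pigeonholing step, and to make it work one needs a \emph{lower} bound on the common mass level $m$ of the selected intervals (equivalently, an upper bound on how many intervals are selected), which does not come for free from the Frostman condition.

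Second, and more seriously, the ``standard fact'' you invoke at the end --- that $\mathcal{E}_\delta(X)\ge\delta^{-\beta}$ along a sequence $\delta\to0$ implies $\dim X\ge\beta$ --- is false; it only bounds the upper box dimension ($X=\QQ\cap[0,1]$ has $\mathcal{E}_\delta(X)=\delta^{-1}$ for all $\delta$ but Hausdorff dimension $0$). No single-scale covering-number lower bound for $P(A,B)$, even at all scales, can yield a Hausdorff dimension bound. This is precisely why the paper's proof runs in the opposite direction: it assumes for contradiction an efficient covering $\{[x_i,x_i+r_i]\}$ of $P(A',B')$ with $\sum r_i^{\alpha+(3/5)\eps}<1$, pushes $\mu\times\nu$ forward under $P$, pigeonholes to find a single scale $\delta$ at which a definite fraction of the pushforward mass sits on $\delta$-intervals of the covering, reconstructs from the preimage a near-Cartesian product $A_1\times B_1$ with the required size and non-concentration (here the crucial lower bound on the mass level $m_x$ comes from the assumed smallness of the covering), and then contradicts the energy-dispersion bound of Lemma~\ref{countingNumberQuadruplesNablePWedgeP}. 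The route you dismiss as less clean (analyzing the pushforward measure against an arbitrary covering) is in fact the only one that reaches Hausdorff dimension. A secondary point: the paper does not invoke Theorem~\ref{main-entropy-growth} but rather first localizes, via the local-dimension argument of Lemma~\ref{findGoodPoint}, to a square where $\partial_xP,\partial_yP,\partial_{xy}P,K_P$ are all bounded below, and then applies Lemma~\ref{countingNumberQuadruplesNablePWedgeP} directly; this localization also lets $\eps$ depend only on $\alpha$ while the scale threshold depends on $P$.
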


Finally, we will prove a slightly more technical ``energy dispersion'' estimate, which is closely related to the $L^2$ flattening lemma that was used by Bourgain and Gamburd \cite{BG2, BG} to establish expansion in the Cayley graph of $SL_2(\mathbb{F}_p)$ and the existence of a spectral gap in certain free subgroups of $SU(2)$; see \cite[\S1.4]{TaoBook} for a nice introduction to this topic.
\begin{thm}\label{main-energy-dispersion}
Let $0< \kappa\leq \alpha< 1$, let $U\subset\RR^2$ be a connected open set that contains $[0,1]^2$, and let $P\colon U \to\RR$ be analytic (resp.~polynomial). Then either $P$ is an analytic (resp.~polynomial) special form, or there exists $\eps = \eps(\alpha,\kappa,P)>0$ and $\eta = \eta(\alpha,\kappa,P)>0$ so that the following is true for all $\delta>0$ sufficiently small.

Let $A,B\subset[0,1]$ and suppose that for all intervals $J$ of length at least $\delta$, $A$ and $B$ satisfy the non-concentration condition
\begin{equation}\label{nonConcentrationCond}
\begin{split}
\mathcal{E}_{\delta}(A \cap J) &\le  |J|^\kappa\delta^{-\alpha-\eta},\\
\mathcal{E}_{\delta}(B \cap J) &\le  |J|^\kappa\delta^{-\alpha-\eta}.
\end{split}
\end{equation}
Then we have the energy dispersion estimate
\begin{equation}\label{energyDispersionEqn}
\mathcal{E}_{\delta}\big(\{(x,x',y,y')\in A^2\times B^2 \colon P(x,y) = P(x^\prime,y^\prime) \}\big)\leq \delta^{-3\alpha+\eps}.
\end{equation}
If $P$ is a polynomial, then the quantities $\eps$ and $\eta$ can be taken to depend only on $\alpha,\kappa$, and $\deg P$.
\end{thm}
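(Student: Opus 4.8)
The plan is to derive the energy dispersion estimate \eqref{energyDispersionEqn} from the entropy growth estimate \eqref{entropyGrowth} of Theorem~\ref{main-entropy-growth} via a standard bilinear Cauchy--Schwarz (popularity) argument, exploiting the fact that the non-concentration hypotheses here coincide with those in Theorem~\ref{main-entropy-growth}. First, I would set $E = \mathcal{E}_{\delta}\big(\{(x,x',y,y')\in A^2\times B^2 \colon P(x,y)=P(x',y')\}\big)$ and partition the range of $P$ into $\delta$-intervals; writing $m(I) = \mathcal{E}_{\delta}\big(\{(x,y)\in A\times B \colon P(x,y)\in I\}\big)$ for each such interval $I$, one has up to $\delta^{o(1)}$ losses that $E \approx \sum_I m(I)^2$, while $\sum_I m(I) \approx \mathcal{E}_{\delta}(A)\,\mathcal{E}_{\delta}(B) \gtrsim \delta^{-2\alpha}$ (here I use $\mathcal{E}_{\delta}(A),\mathcal{E}_{\delta}(B)\geq \delta^{-\alpha}$; if $A,B$ are not assumed to have this lower entropy bound one can first discard the trivial case and otherwise pass to subsets of entropy exactly $\delta^{-\alpha}$, which still satisfy the non-concentration conditions). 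By Cauchy--Schwarz, $\big(\sum_I m(I)\big)^2 \leq \mathcal{E}_\delta(P(A,B)) \cdot \sum_I m(I)^2$, so if $E \leq \delta^{-3\alpha+\eps}$ failed, i.e.\ if $E$ were small, that alone would not immediately give a contradiction; the point is the reverse direction. So instead I would argue contrapositively: suppose $E \geq \delta^{-3\alpha+\eps}$ for a parameter $\eps$ to be chosen small relative to the $\eps$ of Theorem~\ref{main-entropy-growth}.

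The heart of the argument is a refinement-and-dyadic-pigeonhole step. Since $\sum_I m(I) \approx \delta^{-2\alpha}$ and there are at most $\delta^{o(1)}\,\delta^{-M}$ intervals $I$ for a suitable $M$ depending on $P$ (the range of $P$ on $[0,1]^2$ is bounded), dyadic pigeonholing lets me find a single scale $\mu$ and a subcollection $\mathcal{I}$ of intervals, each with $m(I)\sim\mu$, so that $\#\mathcal{I}\cdot\mu \gtrsim \delta^{o(1)}\delta^{-2\alpha}$ and $\#\mathcal{I}\cdot\mu^2 \gtrsim \delta^{o(1)} E \gtrsim \delta^{-3\alpha+\eps+o(1)}$. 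Combining these gives $\mu \gtrsim \delta^{-\alpha+\eps+o(1)}$ and hence $\#\mathcal{I} \lesssim \delta^{-\alpha-\eps+o(1)}$. But $\bigcup_{I\in\mathcal{I}} I$ is a subset of $P(A,B)$ with $\delta$-covering number at most $\#\mathcal{I} \lesssim \delta^{-\alpha-\eps+o(1)}$, while simultaneously the fibers over it carry most of $A\times B$. To contradict Theorem~\ref{main-entropy-growth} I need to locate \emph{within} this structure a genuine product set $A'\times B'$, with $A'\subseteq A$, $B'\subseteq B$, each of entropy $\gtrsim \delta^{-\alpha+o(1)}$ (possibly after shrinking $\alpha$ slightly to some $\alpha'$ with $\kappa \le \alpha' \le \alpha$) and still satisfying the non-concentration conditions \eqref{nonConcentrationCond} with a possibly worse but admissible $\eta$, such that $P(A',B')\subseteq \bigcup_{I\in\mathcal{I}}I$ has covering number $\lesssim\delta^{-\alpha'-\eps}$. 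Applying Theorem~\ref{main-entropy-growth} (with $\kappa$, $\alpha'$, and the smaller entropy-growth exponent it supplies) to $A',B'$ then yields $\mathcal{E}_\delta(P(A',B')) \geq \delta^{-\alpha'-\eps_0}$ with $\eps_0 = \eps_0(\alpha',\kappa)$ independent of $P$, and choosing $\eps < \eps_0$ gives the contradiction. Along the way I use that $P$ is not a special form, which is exactly the hypothesis under which Theorem~\ref{main-entropy-growth} applies; if $P$ \emph{is} a special form there is nothing to prove. The dependence of $\eps$ and $\eta$ on $P$ (and only on $\deg P$, $\alpha$, $\kappa$ in the polynomial case) is inherited directly from Theorem~\ref{main-entropy-growth}.

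The extraction of the product subset $A'\times B'$ is the main obstacle, since the set $\{m(I)\sim\mu\}$ only tells us about a large subset $G\subseteq A\times B$ (the union of fibers $P^{-1}(I)$ for $I\in\mathcal{I}$), and a large subset of a product need not contain a large product. The standard remedy is a Balog--Szemer\'edi--Gowers-type or ``many popular rows'' argument: since $\mathcal{E}_\delta(G)\gtrsim\delta^{o(1)}\delta^{-2\alpha}$ and $G\subseteq A\times B$ with $\mathcal{E}_\delta(A),\mathcal{E}_\delta(B)\approx\delta^{-\alpha}$, a positive $\delta^{o(1)}$-proportion of the $\delta$-columns $x\in A$ have $\mathcal{E}_\delta(G_x)\gtrsim\delta^{o(1)}\delta^{-\alpha}$ where $G_x=\{y : (x,y)\in G\}$; let $A'$ be the set of such $x$, so $\mathcal{E}_\delta(A')\gtrsim\delta^{o(1)}\delta^{-\alpha}$. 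One then needs a common large $B'$ that works for many $x\in A'$ simultaneously --- here I would either (i) pigeonhole again to fix the ``shape'' of $G_x$ at scale $\delta^{o(1)}$ and intersect, accepting a polynomial-in-$\delta^{o(1)}$ loss, or (ii) observe that it in fact suffices to run the energy count directly on $A'\times B$: bounding $\mathcal{E}_\delta(P(A',B))$ from below via Theorem~\ref{main-entropy-growth}, while the popularity structure forces $P(A'\times B)$ to be concentrated in few $\delta$-intervals. Option (ii) is cleaner and avoids the two-sided refinement; the cost is verifying that $A'$ still obeys the non-concentration condition, which follows because $A'\subseteq A$ and non-concentration is monotone under passing to subsets of comparable total entropy. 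I expect the bookkeeping of the $\delta^{o(1)}$ and $\delta^{-\eta}$ factors --- ensuring the final $\eta$ is admissible and that all pigeonholing losses are absorbed into the gap $\eps_0 - \eps > 0$ --- to be the most delicate part, but purely technical once the scheme above is in place.
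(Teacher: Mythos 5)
There is a genuine gap, and it sits exactly at the step you flag as "the main obstacle": the extraction of a sub-product $A'\times B'$ with $P(A'\times B')$ contained in $\bigcup_{I\in\mathcal{I}}I$. What your pigeonholing actually produces is a set $G\subseteq A\times B$ (the union of the popular fibers) with $P(G)$ covered by few $\delta$-intervals; to contradict Theorem~\ref{main-entropy-growth} you need the \emph{full} image $P(A'\times B')$ of a genuine product to be small, and there is no tool that bridges this. The Balog--Szemer\'edi--Gowers mechanism that performs this upgrade for sums uses the group structure of addition in an essential way and has no analogue for a general polynomial $P$; this is precisely why Elekes--R\'onyai-type results are proved by bounding the energy directly rather than by BSG. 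Neither of your proposed remedies closes the gap: in (i), the fibers $G_x$ have exponentially many possible "shapes" even after coarsening, so no affordable pigeonholing fixes a common $B'$ with $A'\times B'\subseteq G$; in (ii), the popularity structure controls $P(G)$ but says nothing about $P(A'\times B)$, which also contains the images of all the unpopular pairs $(x,y)\in (A'\times B)\setminus G$ and can therefore be large. (A secondary but real error: $\mathcal{E}_{\delta}(G)=\#\mathcal{I}\cdot\mu$ is only $\gtrsim\delta^{-2\alpha+O(\eps+\eta)}$, not $\delta^{o(1)}\delta^{-2\alpha}$, since the popular fibers need only carry most of the energy, not most of $A\times B$.)

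The paper's own Example~\ref{epsDependsDForEnergyDispersion} explains why no reduction of this shape can work: for $P(x,y)=x+y+c(x^2+y^2)^{D/2}$ the energy is $\gtrsim\delta^{-3\alpha+3/D}$, coming entirely from the corner $[0,\tfrac14\delta^{1/D}]^4$ where $P$ is indistinguishable from the special form $x+y$ at scale $\delta$, while $\mathcal{E}_{\delta}(P(A,B))\gtrsim\delta^{-\alpha-\eps_0}$ with $\eps_0$ independent of $D$. That excess energy is invisible to the entropy growth estimate (the corner contributes only $\approx\delta^{-\alpha+1/D}$ values of $P$), so Theorem~\ref{main-entropy-growth} cannot detect or bound it; any restriction of $A,B$ to the corner degrades the non-concentration exponent by $1/D$, which is exactly the threshold at which Theorem~\ref{main-entropy-growth} ceases to apply. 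The paper's actual proof runs in the opposite logical direction: both theorems rest on the local energy estimate of Lemma~\ref{countingNumberQuadruplesNablePWedgeP} (proved via Shmerkin's nonlinear projection theorem) on squares where $\partial_xP,\partial_yP,\partial_{xy}P$ and the auxiliary function $K_P$ are all $\geq\delta^{w}$, and for the energy dispersion theorem one must additionally bound the quadruples with a coordinate in the complementary "bad" region, which is done with \L{}ojasiewicz's inequality, the non-concentration hypothesis, and a level-curve counting argument. If you want to salvage your plan, you would need to supply exactly this analysis of the degenerate region, at which point you are reproving the theorem rather than deducing it from Theorem~\ref{main-entropy-growth}.
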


\begin{rem}
Theorems \ref{main-entropy-growth} and \ref{main-energy-dispersion} involve several parameters, and the dependencies between these parameters is somewhat subtle. In Section \ref{examplesSec} below, we will construct several examples highlighting why some of these dependencies are necessary. In particular, note that in Theorem \ref{main-energy-dispersion} the value of $\eps$ in the energy dispersion estimate \eqref{energyDispersionEqn} depends on $P$ (and in particular depends on the degree of $P$ when $P$ is a polynomial). Example \ref{epsDependsDForEnergyDispersion} in Section \ref{examplesSec} shows that this dependence is unavoidable. It is thus rather remarkable that the exponent $\eps$ in the entropy growth estimate \eqref{entropyGrowth} from Theorem \ref{main-entropy-growth} is independent of $P$. 
\end{rem}

\subsection{Expanding functions, nonlinear projections, and the Blaschke curvature form}
In \cite{ER}, Elekes and R\'onyai observed that a polynomial $P(x,y)$ is a special form if and only if $\partial_{xy}\log|P_x/P_y|$ vanishes identically. This quantity is closely related to the Blaschke curvature form, which we will now define. 
\begin{defn}
Let $U\subset\RR^2$ be a connected open set, and let $\phi_1,\phi_2,\phi_3\colon U\to\RR$ be smooth functions whose gradients are non-vanishing and pairwise linearly independent at each point of $U$. Then the Blaschke curvature form is given by 

\begin{equation}\label{BlaschkeCurvatureForm}
2 \frac{\partial}{\partial \phi_1}\frac{\partial}{\partial \phi_2} \log \frac{\partial \phi_3 / \partial\phi_1}{\partial \phi_3 / \partial\phi_2}d\phi_1\wedge d\phi_2.
\end{equation}
\end{defn}
Note that if $\phi_1(x,y) = x$, $\phi_2(x,y) = y$ and $\phi_3(x,y) = P(x,y)$, then the Blaschke curvature form is given by $(2\partial_{xy}\log|P_x/P_y|)dx\wedge dy$. It was shown in \cite{JMR} that in this setup, the Blaschke curvature form vanishes if and only if $P_x/P_y$ can be written (locally) as a product $a(x)b(y)$.

The level sets of the functions $\phi_i$ define three families of smooth curves $\mathcal{F}_1,\mathcal{F}_2,\mathcal{F}_3$ on $U$. For each point $p\in U$, there is a unique curve from each $\mathcal{F}_i$ that contains $p$, and curves from different families intersect transversely. A triple $(\mathcal{F}_1,\mathcal{F}_2,\mathcal{F}_3)$ of families of plane curves with these properties is called a (planar) 3-web. Blaschke \cite{Bl} (see also \cite{Izo} for the exact formulation given above) showed that the curvature form \eqref{BlaschkeCurvatureForm} depends only in the 3-web, and not on the choice of $\phi_1,\phi_2,\phi_3$. In particular, we may always make a (local) change of coordinates so that $\phi_1(x,y) = x$ and $\phi_2(x,y) = y$. 

\begin{thm}\label{curvatureProjections}
Let $0< \alpha< 1$, let $U\subset\RR^2$ be a connected open set, and let $K\subset U$ be compact. Let $\phi_1,\phi_2,\phi_3\colon U\to\RR$ be analytic functions whose gradients are pairwise linearly independent at each point of $U$. Then either the Blaschke curvature form \eqref{BlaschkeCurvatureForm} vanishes identically on $U$, or there exists $\eps = \eps(\alpha)>0$ and $\eta = \eta(\alpha,\phi_1,\phi_2,\phi_3)>0$ so that the following is true for all $\delta>0$ sufficiently small.

Let $X\subset K$ be a set with $\mathcal{E}_{\delta}(X)\geq \delta^{-2\alpha}$, and suppose that for all balls $B$ of diameter $r\geq\delta$, $X$ satisfies the non-concentration condition
\begin{equation}\label{nonConcentrationCondX}
\begin{split}
\mathcal{E}_{\delta}(X \cap B) &\le  r^\alpha\delta^{-2\alpha-\eta}.
\end{split}
\end{equation}
Then for at least one index $i\in \{1,2,3\}$ we have
\begin{equation}\label{atLeastOneBigProjection}
\mathcal{E}_{\delta}(\phi_i(X))\geq \delta^{-\alpha-\eps}.
\end{equation}
\end{thm}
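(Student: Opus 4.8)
The plan is to argue by contradiction: assume $\mathcal{E}_\delta(\phi_i(X))<\delta^{-\alpha-\eps}$ for all $i$, where $\eps=\eps(\alpha)>0$ is to be fixed, and obtain a contradiction for small $\delta$. First I would reduce to a single chart. As $K$ is compact and $\nabla\phi_1,\nabla\phi_2$ are linearly independent on $U$, cover $K$ by finitely many open sets $V_1,\dots,V_m$ on each of which $(\phi_1,\phi_2)$ is a bi-Lipschitz diffeomorphism onto a square, and pigeonhole to get one, say $V_1$, with $\mathcal{E}_\delta(X\cap V_1)\geq m^{-1}\delta^{-2\alpha}$. Transporting $X\cap V_1$ to $[0,1]^2$ by $(\phi_1,\phi_2)$ and rescaling produces $Y\subseteq[0,1]^2$ with $\mathcal{E}_\delta(Y)\gtrsim_m\delta^{-2\alpha}$, on which $\phi_3$ becomes an analytic $P$ with $P_x,P_y$ nowhere vanishing (because $\nabla\phi_3$ is independent of $\nabla\phi_1,\nabla\phi_2$) and which is not an analytic special form: the Blaschke curvature form is an analytic $2$-form depending only on the $3$-web, hence not identically $0$ on $V_1$, and in these coordinates it is $(2\partial_{xy}\log|P_x/P_y|)\,dx\wedge dy$, whose vanishing is exactly the Elekes--R\'onyai special-form criterion recalled in the paper. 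Estimate \eqref{nonConcentrationCondX} transfers, after absorbing the bi-Lipschitz distortion into $\eta$, to $\mathcal{E}_\delta(Y\cap B_r)\lesssim r^\alpha\delta^{-2\alpha-\eta}$, and the projections satisfy $\mathcal{E}_\delta(\pi_1 Y),\mathcal{E}_\delta(\pi_2 Y),\mathcal{E}_\delta(P(Y))<\delta^{-\alpha-\eps}$. Since $Y\subseteq(\pi_1 Y)\times(\pi_2 Y)$, this forces $\mathcal{E}_\delta(\pi_1 Y),\mathcal{E}_\delta(\pi_2 Y)\geq\delta^{-\alpha+\eps}$; writing $A=\pi_1 Y$, $B=\pi_2 Y$, the set $Y$ is a subset of $A\times B$ of density at least $\delta^{2\eps}$, with $\mathcal{E}_\delta(A),\mathcal{E}_\delta(B)$ both in $[\delta^{-\alpha+\eps},\delta^{-\alpha-\eps}]$.

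Next I would regularize $Y$ to a ``Cartesian-product-like'' configuration. By dyadic pigeonholing over the vertical and horizontal fiber sizes, and over the dyadic scales $r\in[\delta,1]$, I would pass to $A_0\subseteq A$, $B_0\subseteq B$ and $Y_0\subseteq Y\cap(A_0\times B_0)$ with $\mathcal{E}_\delta(Y_0)\geq\delta^{o(1)}\mathcal{E}_\delta(Y)$, $\mathcal{E}_\delta(A_0),\mathcal{E}_\delta(B_0)\geq\delta^{-\alpha+O(\eps)}$, density of $Y_0$ in $A_0\times B_0$ at least $\delta^{O(\eps)}$, and with $A_0,B_0$ obeying a one-dimensional non-concentration estimate of the form \eqref{nonConcentrationCond} at a fixed exponent $\kappa=\kappa(\alpha)>0$ (one can take $\kappa=\alpha/2$, the exponent \eqref{nonConcentrationCondX} yields for a product) with some $\eta_0=\eta_0(\alpha,P)>0$. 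The mechanism is the familiar localization-and-rescaling of discretized sum--product arguments: if non-concentration of a projection cannot be restored by refinement, that projection concentrates in a short interval, and rescaling that interval to unit length in the relevant coordinate strictly increases the effective dimension; iterating, one either terminates at a non-concentrated configuration or else exhibits a scale at which $\mathcal{E}_\delta(\pi_1 Y)$ or $\mathcal{E}_\delta(\pi_2 Y)$ is $\geq\delta^{-\alpha-\eps}$, contradicting the standing assumption. (For the ambient two-dimensional set, this is precisely what \eqref{nonConcentrationCondX} is designed to allow.)

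Now apply Theorem~\ref{main-entropy-growth} to $A_0,B_0$ at the exponent $\alpha'=\alpha-O(\eps)$, with non-concentration exponent $\kappa$ and parameter $\eta_0$, using that $P$ is not a special form: this gives $\mathcal{E}_\delta(P(A_0,B_0))\geq\delta^{-\alpha'-\eps_0}$ with $\eps_0=\eps_0(\alpha',\kappa)>0$ \emph{independent of $P$}, and it is this $P$-independence that makes $\eps$ in Theorem~\ref{curvatureProjections} a function of $\alpha$ alone. It remains to transfer this to $\mathcal{E}_\delta(P(Y_0))\leq\mathcal{E}_\delta(\phi_3(X))$. For a $\delta$-interval $I$ in the range of $P$ let $N_I$, $n_I$ be the $\delta$-covering numbers of the fibers of $A_0\times B_0$, resp.\ $Y_0$, over $I$; then $\sum_I N_I\asymp\mathcal{E}_\delta(A_0)\mathcal{E}_\delta(B_0)$, $\sum_I n_I\asymp\mathcal{E}_\delta(Y_0)$, $n_I\le N_I\le C\min(\mathcal{E}_\delta A_0,\mathcal{E}_\delta B_0)$ (the level curves of $P$ are uniformly transverse to both axes), $\mathcal{E}_\delta(P(A_0,B_0))\asymp\#\{I:N_I>0\}$ and $\mathcal{E}_\delta(P(Y_0))\asymp\#\{I:n_I>0\}$. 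Splitting the intervals met by $Y_0$ according to whether $N_I$ exceeds a threshold $T=\delta^{-O(\eps)}$: the ``light'' ones give $\#\{I:n_I>0,\ N_I\le T\}\ge T^{-1}\sum_{\text{light}}n_I\gtrsim\delta^{-2\alpha+O(\eps)}$, which beats $\delta^{-\alpha-\eps}$, unless $Y_0$ places most of its mass on ``heavy'' intervals. In that case $Y_0$ lies, up to a constant, on at most $\delta^{-\alpha+O(\eps)}$ level curves of $P$, each nearly a graph over all of $A_0$ with image nearly all of $B_0$; these curves then nearly exhaust $A_0\times B_0$, which (after one further localization to dispose of the remaining exceptional columns) forces $P(A_0,B_0)$ to collapse onto the $\le\delta^{-\alpha+O(\eps)}$ corresponding values, contradicting $\mathcal{E}_\delta(P(A_0,B_0))\ge\delta^{-\alpha-\eps_0}$ once $\eps<\eps_0/C$. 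Either way $\mathcal{E}_\delta(\phi_3(X))\ge\delta^{-\alpha-\eps}$, contradicting the assumption, which fixes $\eps$ as a function of $\alpha$ (through $\kappa,\eps_0$).

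The step I expect to be the main obstacle is exactly this regularization-and-transfer: producing from the general non-concentrated $X$ a sub-configuration on which Theorem~\ref{main-entropy-growth} applies with all its hypotheses verified (in particular one-dimensional non-concentration of the two factors) \emph{and} on which the bound on the full image $P(A_0,B_0)$ reflects back onto $P(Y_0)$, rather than being lost because $Y_0$ fills only a small part of $A_0\times B_0$ or hides along a few exceptional level curves. A dense subset of a product need not contain any large combinatorial rectangle, so one cannot simply locate a product inside $X$; the argument must instead play the density of $Y_0$, the non-concentration inherited from \eqref{nonConcentrationCondX}, and the transversality of the level curves of $P$ against one another, and the bookkeeping of the parameters $\eps,\eta,\eta_0,\kappa,\eps_0$ together with the multiscale localization is where the real work lies. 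The reduction to a chart and the identification of ``special form'' with ``vanishing Blaschke curvature'' are, by contrast, routine.
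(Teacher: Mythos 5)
Your overall architecture (chart reduction, contradiction hypothesis, passage to a near-product, Elekes--R\'onyai machinery, and the observation that the $P$-independence of the expansion exponent is what makes $\eps=\eps(\alpha)$) matches the paper, but both of the steps you flag as ``the main obstacle'' contain genuine gaps, and in both cases the paper's resolution is different from what you propose. First, the one-dimensional non-concentration of $A=\pi_1 Y$ and $B=\pi_2 Y$ does \emph{not} follow from the two-dimensional condition \eqref{nonConcentrationCondX}: a product $A\times B$ with $A$ an interval of length $\delta^{1-\alpha}$ (so $\mathcal{E}_\delta(A)=\delta^{-\alpha}$, maximally concentrated) and $B$ a $\delta^\alpha$-spaced progression satisfies \eqref{nonConcentrationCondX}, so the exponent $\kappa=\alpha/2$ you claim is ``yielded for a product'' simply is not, and the ``localization-and-rescale'' fallback is anisotropic (it rescales only one coordinate), which destroys the hypotheses you need downstream. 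The paper's Lemma \ref{findingLargeNonconcentratedCartesianProduct} instead derives the non-concentration of $A$ and $B$ from the assumed smallness of the \emph{third} projection: if $A$ piles up in an interval $J$ by a factor $K$, the two-dimensional non-concentration of $X$ spreads the corresponding fibers over $\gtrsim Kt^{-\alpha}\delta^{O(\eta)}$ well-separated rows $b$, the images $P(A_b,b)$ land in disjoint intervals by transversality of the level curves, and summing forces $\mathcal{E}_\delta(P(X))\gtrsim K^2\delta^{-\alpha+O(\eta)}$, contradicting the standing bound unless $K\lesssim\delta^{-O(\eta)}$. This single-scale argument is the missing idea.

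Second, your transfer from $\mathcal{E}_\delta(P(A_0,B_0))$ back to $\mathcal{E}_\delta(P(Y_0))$ does not close. With threshold $T=\delta^{-O(\eps)}$ the number of heavy fibers is only bounded by $\sum_I N_I/T\approx\delta^{-2\alpha+O(\eps)}$, not $\delta^{-\alpha+O(\eps)}$, so the heavy case gives no collapse of $P(A_0,B_0)$; raising $T$ to $\delta^{-\alpha-O(\eps)}$ fixes the heavy count but then the light case only yields $\mathcal{E}_\delta(P(Y_0))\gtrsim T^{-1}\mathcal{E}_\delta(Y_0)\approx\delta^{-\alpha+O(\eps)}$, which does not contradict $\mathcal{E}_\delta(P(Y_0))<\delta^{-\alpha-\eps}$. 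This is the standard failure mode of passing sumset lower bounds to dense subsets of products (it genuinely requires Balog--Szemer\'edi--Gowers-type input in general). The paper sidesteps it entirely by never invoking Theorem \ref{main-entropy-growth}: it uses the energy dispersion bound of Lemma \ref{countingNumberQuadruplesNablePWedgeP} on the full product $A_1\times B_1$, notes that an energy \emph{upper} bound automatically passes to the subset $Y_0=X_\delta\cap(A\times B)\cap Q$, and then applies the Cauchy--Schwarz Lemma \ref{CSLem} directly to $Y_0$ to lower-bound $\mathcal{E}_\delta(P(Y_0))\gtrsim(\mathcal{E}_\delta(Y_0))^2/\delta^{-3\alpha+\eps'}$. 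Replacing your ``apply Theorem \ref{main-entropy-growth} and transfer'' step by ``apply the energy dispersion lemma on the product, restrict, and Cauchy--Schwarz on the subset'' is what makes the argument work; the localization to a cube where $K_P$, $\partial_xP$, $\partial_yP$, $\partial_{xy}P$ are all large (Lemma \ref{cubesWhereAnalyticFLarge}) is needed before either step, as in the paper.
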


Theorem \ref{curvatureProjections} is a discretized projection theorem in the spirit of \cite{B10}. In \cite{B10}, Bourgain proved that if $X\subset[0,1]^2$ is a set satisfying \eqref{nonConcentrationCondX}, and if $\Theta$ is a large (and non-concentrated) set of directions, then there must exist a direction $\theta\in\Theta$ so that the associated projection $\phi_{\theta}(x,y) = x\cos\theta + y\sin\theta$ satisfies \eqref{atLeastOneBigProjection}. The set $\Theta$ must be large, since if we define $X=[0,1]^2 \cap (\delta^{\alpha}\ZZ)^2$, then whenever $s$ and $t$ are integers with $|s|$ and $|t|$ small, the projection $(x,y)\cdot (s,t)$ will have small cardinality and thus fail to satisfy \eqref{atLeastOneBigProjection}. 

Note that if $\theta_1,\theta_2,\theta_3$ are directions, then the Blaschke curvature form of the triple of linear projections $\phi_i = \phi_{\theta_i}$ vanishes identically. Theorem \ref{curvatureProjections} says that the situation for non-vanishing Blaschke curvature is very different: while any set $X$ satisfying \eqref{nonConcentrationCondX} can have small projection under two projection maps $\phi_1$ and $\phi_2$ (indeed, we could define $X = \phi_1^{-1}(X_1) \cap \phi_2^{-1}(X_2)$, where $X_1$ and $X_2$ are non-concentrated sets of cardinality $\delta^{-\alpha}$), it is impossible to have small projection under three projection maps. 

As a motivating example, if $p_1,p_2,p_3\in\RR^2$ are not collinear and if we define $\phi_i(q) = |p-q|$, then Elekes and Szab\'o \cite[Theorem 32]{ES} showed that the Blaschke curvature form of $(\phi_1,\phi_2,\phi_3)$ does not vanish identically on $\RR^2$. This leads to the following variant of the pinned Falconer distance problem.
\begin{cor}\label{pinnedDistanceCor}
Let $0< \alpha< 1$ and let $p_1,p_2,p_3$ be three points that are not collinear. Let $K\subset\RR^2$ be a compact set that does not intersect any of the three lines $\ell_{p_ip_j}$. Then there exists $\eps = \eps(\alpha)>0$ and $\eta = \eta(\alpha,p_1,p_2,p_3)>0$ so that the following is true for all $\delta>0$ sufficiently small.

Let $X\subset K$ be a set with $\mathcal{E}_{\delta}(X)\geq \delta^{-2\alpha}$, and suppose that for all balls $B$ of diameter $r\geq\delta$, $X$ satisfies the non-concentration condition
\begin{equation}\label{nonConcentrationCondDist}
\begin{split}
\mathcal{E}_{\delta}(X \cap B) &\le  r^\alpha\delta^{-2\alpha-\eta}.
\end{split}
\end{equation}
Then for at least one index $i\in \{1,2,3\}$ we have
\begin{equation}\label{atLeastOneBigPinnedDistance}
\mathcal{E}_{\delta}( \{ |q-p_i| \colon q \in X\})\geq \delta^{-\alpha-\eps}.
\end{equation}
\end{cor}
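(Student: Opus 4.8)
The plan is to apply Theorem~\ref{curvatureProjections} to the triple $\phi_i(q)=|q-p_i|$, $i=1,2,3$, after setting up the geometry and invoking the non-vanishing of the associated Blaschke curvature form. Since $p_1,p_2,p_3$ are not collinear, the lines $\ell_{p_1p_2},\ell_{p_1p_3},\ell_{p_2p_3}$ are distinct, pairwise non-parallel (each pair meets in one of the $p_i$), and not concurrent; hence $\RR^2$ minus their union is a disjoint union of at most seven open convex — in particular connected — regions, none of which contains any $p_i$. On each such region $R$ the functions $\phi_i$ are real analytic, and $\nabla\phi_i(q)=(q-p_i)/|q-p_i|$ and $\nabla\phi_j(q)=(q-p_j)/|q-p_j|$ are linearly dependent precisely when $q$, $p_i$, $p_j$ are collinear, i.e.\ when $q\in\ell_{p_ip_j}$; so the three gradients are pairwise linearly independent at every point of $R$. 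Because $K$ avoids all three lines, $K\cap R$ is both open and closed in $K$, hence compact. Thus the geometric hypotheses of Theorem~\ref{curvatureProjections} are satisfied on $U=R$ with the compact set $K\cap R$, for each region $R$.

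The key analytic input is that the Blaschke curvature form of $(\phi_1,\phi_2,\phi_3)$ does not vanish identically on any region $R$. Replacing each $\phi_i$ by $|q-p_i|^2$ does not change the level-curve families (circles centered at $p_i$), hence does not change the $3$-web, and therefore — by Blaschke's invariance of the curvature form, recalled in the introduction — does not change the curvature form; for these polynomial defining functions a standard computation (Cramer's rule followed by differentiating a logarithm) shows the coefficient of the curvature form in the standard coordinates is a rational function $\mathcal K$ of $q$. By \cite[Theorem~32]{ES}, $\mathcal K$ is not identically zero, so it vanishes only on a proper real-algebraic curve, which cannot contain any of the full-dimensional regions $R$. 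Hence the curvature form is nonzero somewhere on each $R$, and Theorem~\ref{curvatureProjections} applies there.

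To conclude, let $X\subset K$ be as in the statement. By subadditivity of $\mathcal{E}_{\delta}$ over the decomposition $X=\bigcup_R(X\cap R)$ into its at most seven pieces, some region $R$ satisfies $\mathcal{E}_{\delta}(X\cap R)\ge\tfrac17\mathcal{E}_{\delta}(X)\ge\tfrac17\delta^{-2\alpha}$, and $X\cap R$ inherits the non-concentration bound \eqref{nonConcentrationCondDist} verbatim. We then apply Theorem~\ref{curvatureProjections} to $X\cap R$ on $U=R$, using two observations. First, the bounded-factor loss in passing from $\delta^{-2\alpha}$ to $\tfrac17\delta^{-2\alpha}$ is harmless: since the number of regions is a universal constant, it does not affect the exponent $\eps=\eps(\alpha)$ produced by Theorem~\ref{curvatureProjections} (this robustness to a constant-factor weakening of the hypothesis $\mathcal{E}_{\delta}(X)\ge\delta^{-2\alpha}$ is immediate from the proof of Theorem~\ref{curvatureProjections}). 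Second, Theorem~\ref{curvatureProjections} produces a constant $\eta_R$ depending only on $\alpha$ and on $\phi_1|_R,\phi_2|_R,\phi_3|_R$; taking the $\eta$ in the corollary to be $\min_R\eta_R$ — a minimum over at most seven quantities, all determined by $p_1,p_2,p_3$ — gives $\eta=\eta(\alpha,p_1,p_2,p_3)$ and makes \eqref{nonConcentrationCondDist} imply the hypothesis of Theorem~\ref{curvatureProjections} for $X\cap R$. The conclusion is that $\mathcal{E}_{\delta}(\phi_i(X\cap R))\ge\delta^{-\alpha-\eps}$ for some $i\in\{1,2,3\}$; since $\phi_i(X\cap R)\subseteq\{\,|q-p_i|\colon q\in X\,\}$, inequality \eqref{atLeastOneBigPinnedDistance} follows.

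I expect the main obstacle to be, strictly speaking, not in this corollary at all but in the cited fact \cite[Theorem~32]{ES} that the Blaschke curvature form of three distance functions to non-collinear centers does not vanish identically; granting that, the remainder is bookkeeping — verifying that the disconnectedness of the complement of the three lines costs only a harmless universal constant, and tracking which parameters are permitted to depend on $K$ versus only on $p_1,p_2,p_3$.
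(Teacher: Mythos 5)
Your proposal is correct and follows exactly the route the paper intends: the corollary is stated as an immediate consequence of Theorem~\ref{curvatureProjections} together with the Elekes--Szab\'o fact \cite[Theorem 32]{ES} that the curvature form of the three distance functions is not identically zero, and the paper gives no further proof. Your additional bookkeeping --- restricting to the connected components of the complement of the three lines, checking that non-vanishing of the (rational) curvature coefficient on $\RR^2$ forces non-vanishing on each full-dimensional region, and absorbing the constant-factor loss from pigeonholing over the at most seven regions --- correctly fills in the details the paper leaves implicit.
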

\begin{rem}
The requirement that none of the three lines $\ell_{p_ip_j}$ intersect $K$ is imposed because the line $\ell_{p_ip_j}$ is precisely the set where the gradients of the functions $q\mapsto |q-p_i|$ and $q\mapsto |q-p_j|$ are linearly dependent.
\end{rem}

\begin{rem}
We conjecture that the quantity $\eta$ can be taken independent of the choice of $p_1,p_2,p_3$.
\end{rem}

\begin{rem}
Theorems \ref{main-entropy-growth}, \ref{dimension-expander}, and \ref{main-energy-dispersion} can also be re-phrased in the language of triples $\phi_1,\phi_2,\phi_3$ with non-vanishing Blaschke curvature. Results in this spirit have appeared in the harmonic analysis literature; see \cite{Chr} and the references therein for further discussion. 
\end{rem}

\subsection{Proof ideas and structure of the paper}

Our proof of Theorems \ref{main-entropy-growth} and \ref{main-energy-dispersion} follow the major ideas from the proof of the Elekes-R\'onyai theorem in \cite{RSZ}. In \cite{RSZ}, the authors reduced a growth estimate of the type \eqref{entropyGrowth} to an energy dispersion estimate of the type \eqref{energyDispersionEqn}, and then reduced the latter to an incidence problem involving points and curves in the plane. The authors then showed that either this collection of points and curves satisfied the hypotheses of a generalized Szemer\'edi-Trotter-type theorem, or a certain semi-algebraic set related to the polynomial $P$ had large dimension. In the former case the the desired energy dispersion estimate is true, while in the latter case, the polynomial $P$ is a special form.

We will follow a similar strategy, though the discretized setting presents new difficulties. First, in the discretized setting it is not possible to directly reduce an entropy growth estimate of the type \eqref{entropyGrowth} to an energy dispersion estimate of the type \eqref{energyDispersionEqn}, since the $\eps$ gain in Theorem \ref{main-energy-dispersion} depends on the degree of the polynomial $P$ (and the examples in Section \ref{examplesSec} show that this dependence is unavoidable), while the $\eps$ gain in Theorem \ref{main-entropy-growth} is independent of $D$; this phenomena is not present in the discrete setting. For example, the polynomial $P(x,y) = x + y + (x^2 + y^2)^{D/2}$, where $D$ is a large even integer, highlights some of the difficulties. When $|x|,|y|\lesssim\delta^{1/D}$ then at scale $\delta$, $P(x,y)$ is indistinguishable from the special form $Q(x,y)=x+y$; this is problematic because the region where $|x|,|y|\lesssim\delta^{1/D}$ can contain fairly large portion of $A\times B$.  To tackle this difficulty, we will introduce an auxiliary function $K_P(x,y)$, which is small on regions where $P$ closely resembles a special form. 

When $P$ is not a special form, the authors in \cite{RSZ} reduced the problem of counting quadruples $P(x,y) = P(x',y')$ to a point-curve incidence problem, and they then applied a variant of the Szemer\'edi-Trotter theorem. Our approach is similar in spirit; in regions where our auxiliary function $K_P(x,y)$ is large, we reduce the problem of counting (discretized) quadruples $P(x,y) = P(x',y')$ to a nonlinear version of Bourgain's discretized projection theorem \cite{B10}, which was recently proved by Shmerkin \cite{Shm}.

The structure of the paper is as follows. In Section \ref{analFunctNonConcSec} we will introduce tools that describe how ``fractal'' objects such as products of non-concentrated sets can interact with thin neighborhoods of the vanishing locus of an analytic function.  In Section \ref{ShmerkinSection} we will introduce Shmerkin's nonlinear discretized projection theorem, and we will recast this theorem into a (rather technical) statement about energy dispersion. In section \ref{auxiliaryFunctionSection} we will introduce the auxiliary function $K_P$ mentioned above, and we show that it has two key properties. First, if $K_P$ vanishes identically, then $P$ is a special form. Second, when $K_P$ is large, then an energy dispersion estimate of the form \eqref{energyDispersionEqn} holds with an exponent $\eps$ that is independent of $D$. In Sections \ref{mainThmProofsSectionOne} and \ref{mainThmProofsSectionTwo}, we prove Theorems \ref{main-entropy-growth} and \ref{main-energy-dispersion}, respectively. To prove Theorem \ref{main-entropy-growth}, we use the tools from Section \ref{analFunctNonConcSec} to find a large square where $K_P$ is large and then apply the results from Section \ref{auxiliaryFunctionSection}. To prove Theorem \ref{main-energy-dispersion} we need to work slightly harder---in addition to finding a large square where $K_P$ is large, we also need to analyze the region where $K_P$ is small. In Section \ref{dimExpansionSec} we use the results from Section \ref{auxiliaryFunctionSection} and standard techniques from geometric measure theory to prove Theorem \ref{dimension-expander}. Finally,  in Section \ref{BlaschkeSection} we will prove Theorem \ref{curvatureProjections}.

\subsection{Examples}\label{examplesSec}
In this section, we will construct examples demonstrating that some of the parameter dependencies described in Theorems \ref{main-entropy-growth} and \ref{main-energy-dispersion} are necessary.

First, we will construct an example showing that if $P$ is a special form, then no nontrivial entropy expansion, dimension expansion, or energy dispersion estimates can hold. 
\begin{example}[special forms are not expanders]
Let $P=h(a(x)+b(y))$, where $h,a,b$ are univariate analytic functions. Let $I\subset[0,1]$ and $J\subset[0,1]$ be intervals of length $\sim 1$ so that $|\partial_x a|\sim 1$ on $I$ and $|\partial_x b|\sim 1$ on $J$. Let $I^\prime = a(I)$ and $J^\prime =b(J)$. Let $\Lambda\subset\RR$ be an arithmetic progression consisting of $\delta^{-\alpha}$ points with spacing $\delta^{\alpha} \min(|I^\prime|, |J^\prime|)$, and let $A = a^{-1}(\Lambda + v),\ B = b^{-1}(\Lambda+w)$, where $v,w\in\RR$ are chosen so that $\Lambda+v\subset I^\prime$ and $\Lambda+w\subset J^\prime$. Then $A$ and $B$ satisfy the hypotheses of Theorem \ref{main-entropy-growth}, but
\begin{equation*}
\mathcal{E}_{\delta}(P(A,B)) = \mathcal{E}_{\delta} (h(\Lambda + \Lambda + (v+w))\lesssim 2\delta^{-\alpha}.
\end{equation*}
Similarly,
\begin{equation*}
\begin{split}
\mathcal{E}_{\delta}\big(& \{(x,x',y,y')\in A^2\times B^2 \colon P(x,y) = P(x^\prime,y^\prime) \}\big)\\
& \sim \mathcal{E}_{\delta}\big( \{(x,x',y,y')\in (\Lambda+v)^2\times (\Lambda+w)^2 \colon h(x+y) = h(x'+y') \}\big)\\
& \geq \mathcal{E}_{\delta}\big( \{(x,x',y,y')\in \Lambda^4 \colon x+y = x'+y' \}\big)\\
& \gtrsim \delta^{-3\alpha}.
\end{split}
\end{equation*}

In particular, if $P$ is the special form $h(a(x)+b(y))$, then there are sets $A$ and $B$ so that the entropy growth estimate \eqref{entropyGrowth} and the energy dispersion estimate \eqref{energyDispersionEqn} will not hold for any $\eps>0$. A similar example can be constructed for Theorem \ref{dimension-expander} by choosing $\Lambda\subset\RR$ to be compact set with $\dim(\Lambda) = \dim(\Lambda+\Lambda) = \alpha$; see i.e.~\cite{Ko}.
\end{example}

For the next examples, let $D\geq 4$ be an even integer, $0<c\le 1$, and let $0\leq \alpha,\eta\leq 1$. Define
\begin{equation}\label{defAB}
A=B=\{[j\delta^{\alpha+\eta},\ j\delta^{\alpha+\eta}+\delta] \colon j=1,\ldots,\delta^{-\alpha}\},
\end{equation} 
and define
\begin{equation}\label{defP}
P(x,y)  = x+y + c(x^2 + y^2)^{D/2}. 
\end{equation}
Note that $P$ is not a special form, as can be easily verified (e.g. using the characterization in Lemma~\ref{special}). If $\alpha+\eta\leq 1$ then $\mathcal{E}_{\delta}(A)=\mathcal{E}_{\delta}(B)=\delta^{-\alpha}$, and $A$ and $B$ satisfy the non-concentration conditions \eqref{nonConcentrationCond2} and \eqref{nonConcentrationCond}. The polynomial $P$ is not a special form, and it satisfies $|\partial_x P|\sim 1$ and $|\partial_y P|\sim 1$ on $[0,1]^2$.

\begin{example}[$\eps$ must depend on $\alpha$] 
Let $A,B$ and $P$ be as defined in \eqref{defAB} and \eqref{defP}, with $D=4$, $\kappa=\eta=0,$ and $c=1$. Then $\mathcal{E}_{\delta}(P(A,B))\lesssim \mathcal{E}_{\delta}(A)\mathcal{E}_{\delta}(B)=\delta^{-2\alpha}$ and $\mathcal{E}_{\delta}(P(A,B))\lesssim\delta^{-1}$. We immediately see that in Theorem \ref{main-entropy-growth} we must have $\eps\leq\min(\alpha,1-\alpha)$. Since $|\nabla P|\gtrsim 1$ on $[0,1]^2$, a standard double-counting argument and Cauchy-Schwarz shows that the same constraint on $\eps$ must also hold for Theorem \ref{main-energy-dispersion}.
\end{example}

\begin{example}[$\eta$ must depend on $D$]
Let $A,B$ and $P$ be as defined in \eqref{defAB} and \eqref{defP}. Note that whenever $x\in A$ and $y\in B$ with $x,y\leq \frac{1}{2}\delta^{1/D},$ we have $0\leq (x^2 + y^2)^{D/2} \leq \delta/2$. 

If $D>1/\eta$, then we have $(x^2 + y^2)^{D/2} \leq \delta/2$ for all $x\in A$ and $y\in B$, and it is easy to verify that
\begin{equation}\label{smallEntropy}
\mathcal{E}_{\delta}(P(A,B))\sim\delta^{-\alpha}.
\end{equation}
We conclude that the quantity $\eta$ from Theorem \ref{main-entropy-growth} must depend on $D$. As in the previous example, \eqref{smallEntropy} can be combined with a standard double-counting argument and Cauchy-Schwarz to show that the quantity $\eta$ from Theorem \ref{main-entropy-growth} must depend on $D$.
\end{example}

\begin{example}[$\eps$ must depend on $D$ in Theorem \ref{main-energy-dispersion}]\label{epsDependsDForEnergyDispersion}
Let $A,B$ and $P$ be as defined in \eqref{defAB} and \eqref{defP}, with $\eta = 0$. Then 
\begin{equation*}
\begin{split}
\mathcal{E}_{\delta}\big( & \{(x,x',y,y')\in A^2\times B^2\colon P(x,y) = P(x^\prime,y^\prime) \}\big)\\
& \geq \mathcal{E}_{\delta}\big( \{(x,x',y,y')\in (A^2\times B^2) \cap [0, \frac{1}{4}\delta^{1/D}]^4 \colon P(x,y) = P(x^\prime,y^\prime) \}\big)\\
& \sim \mathcal{E}_{\delta}\big( (A^2\times B)\cap [0, \frac{1}{4}\delta^{1/D}]^3\big)\\
&\sim \delta^{-3\alpha + 3/D}.
\end{split}
\end{equation*}
We conclude that the quantity $\eps$ in the bound \eqref{energyDispersionEqn} from Theorem \ref{main-energy-dispersion} must depend on $D$. Note that by Theorem~\ref{main-entropy-growth} we have $\mathcal{E}_{\delta}(P(A,B))\gtrsim\delta^{-\alpha-\eps}$, where $\eps>0$ is independent of $D$.
\end{example}

\begin{example}[$\delta$ must be sufficiently small depending on $P$]
Let $P$ be as defined in \eqref{defP}, with $D=4$. Letting $c>0$ be small, we see that $P(x,y) = x + y + O(c)$, and thus no bound of the form \eqref{entropyGrowth} or \eqref{energyDispersionEqn} can hold for $\eps>0$ unless $\delta>0$ is sufficiently small compared to $c$.
\end{example}

% \begin{rem}
% We do not know whether the quantity $\eps$ from the statement of Theorems \ref{main-entropy-growth} and \ref{main-energy-dispersion} must depend on $\kappa$.
% \end{rem}

\subsection{Notation}
In what follows, $\delta$ will denote a small positive number. We will be interested in the asymptotic behaviour various quantities as $\delta\searrow 0$. If $f$ and $g$ are functions, we write $f\lesssim g$ (or $f = O(g)$ or $g = \Omega(f)$) if there is a constant $C$ (independent of $\delta$) so that $f\leq Cg$. Such a constant will be called an ``implicit constant in the $\lesssim$ notation.'' If $f\lesssim g$ and $g\lesssim f$, we write $f\sim g$. 

While Theorems \ref{main-entropy-growth}, \ref{dimension-expander}, and \ref{main-energy-dispersion} involve bivariate polynomials, some of our intermediate results will be slightly more general and will involve functions with domain $\RR^d$. All implicit constants in the $\lesssim$ notation are allowed to depend on $d,\alpha$ and $\kappa$. 

If $X\subset\RR^d$, we will use $\mathcal{E}_{\delta}(X)$ to denote the $\delta$-covering number of $X$; $\#X$ to denote the cardinality of $X$; and $|X|$ to denote the Lebesgue measure of $X$. To improve clarity, we sometimes use $\vol_d(X)$ in place of $|X|$ to emphasize the ambient dimension. Finally, if $t>0$ we use $N_t(X)$ to denote the $t$-neighborhood of $X$.

\subsection{Thanks}
The authors would like to thank Pablo Shmerkin for numerous helpful comments and suggestions. The authors would like to thank Michael Christ for stimulating discussions, and for alerting them to the Blaschke curvature form and its connection to special forms.

\section{Analytic functions and non-concentration}\label{analFunctNonConcSec}
In this section we will prove several technical results showing how products of non-concentrated sets can intersect thin neighborhoods of the vanishing locus of an analytic function. The main tools we will use are \L{}ojasiewicz's inequality for real analytic functions, \L{}ojasiewicz's stratification of semianalytic sets, and Whitney's decomposition of a domain into Whitney cubes. We will recall these results below.

\begin{thm}[\L{}ojasiewicz's inequality \cite{Lo}, \cite{Sol}]\label{Lojasiewicz}
Let $U\subset\RR^d$ be open, let $\phi\colon U \to\RR$ be real analytic and let $K\subset\RR^d$ be compact. Then there exist constants $c>0$ (small) and $C>0$ (large) so that for each $z\in K$, we have
\begin{equation*}
|\phi(z)| \geq c\cdot  {\rm dist}(z, Z(\phi))^{C}.
\end{equation*}
If $f$ is a polynomial, the constant $C$ can be chosen to depend only on $d$ and the degree of $f$. 
\end{thm}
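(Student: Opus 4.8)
\emph{Proof strategy.} This is a classical result, so the plan is to present a streamlined version of the standard proof via resolution of singularities, after first reducing to a local statement near the zero set. Shrink $U$ to an open set $U'$ with $K\subset U'$ and $\overline{U'}$ a compact subset of $U$, and cover the compact set $Z(\phi)\cap\overline{U'}$ by finitely many balls $B(p_j,r_j/2)$. On the compact set $K\setminus\bigcup_j B(p_j,r_j/2)$ the function $\phi$ does not vanish (any zero of $\phi$ in $K$ lies in $Z(\phi)\cap\overline{U'}$), so $|\phi|\ge c_0>0$ there while $\mathrm{dist}(z,Z(\phi))$ stays bounded, and the inequality is immediate. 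It therefore suffices to prove, for each $p\in Z(\phi)\cap\overline{U'}$, a \emph{local} bound: there is a ball $B(p,r_p)$ and a neighborhood $V$ with $\overline{B(p,r_p)}\subset V\subset U'$ such that $|\phi(z)|\ge c_p\,\mathrm{dist}\big(z,Z(\phi)\cap V\big)^{C_p}$ for $z\in B(p,r_p)$; since $\mathrm{dist}(z,Z(\phi))\le\mathrm{dist}(z,Z(\phi)\cap V)$, this is stronger than what we need.

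For the local bound I would invoke Hironaka's resolution of singularities in its local real-analytic form: there is a proper real analytic surjection $\pi\colon\widetilde V\to V$ from a manifold, an isomorphism over $V\setminus Z(\phi)$, such that $\pi^{-1}(Z(\phi))$ is a normal crossings divisor; thus in suitable local coordinates $(u_1,\dots,u_d)$ on $\widetilde V$ one has $\phi\circ\pi=(\mathrm{unit})\cdot u_1^{a_1}\cdots u_d^{a_d}$ and $\pi^{-1}(Z(\phi))=\bigcup_{i:a_i>0}\{u_i=0\}$ there. On a compact sub-chart $\pi$ is Lipschitz with some constant $L$ and the unit is bounded above and below, so for $u$ in the sub-chart, with $m=\min_{i:a_i>0}|u_i|$: zeroing out the minimizing coordinate produces a point of $\pi^{-1}(Z(\phi))$ at distance $m$ from $u$ whose image lies in $Z(\phi)\cap V$, giving $\mathrm{dist}(\pi(u),Z(\phi)\cap V)\le Lm$; while $|\phi(\pi(u))|\gtrsim\prod_{i:a_i>0}|u_i|^{a_i}\ge m^{A}$ with $A=\sum_i a_i$. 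Combining, $|\phi(\pi(u))|\gtrsim\mathrm{dist}(\pi(u),Z(\phi)\cap V)^{A}$ on that sub-chart (the charts where $\phi\circ\pi$ is a nonvanishing unit are handled directly). Since $\pi$ is proper, $\pi^{-1}(\overline{B(p,r_p)})$ is covered by finitely many such sub-charts; taking the largest exponent and the worst constant, and then for each $z\in B(p,r_p)$ choosing a preimage $u\in\pi^{-1}(z)$, pushes the estimate down to $V$ and proves the local bound. (Alternatively, one can follow \L{}ojasiewicz's original route and deduce the inequality from his stratification theorem for semianalytic sets by induction on dimension, as in the second tool the paper invokes.)

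Finally, the claim that for a polynomial $\phi$ of degree $D$ the exponent $C$ depends only on $d$ and $D$ is cleanest \emph{not} through resolution but semialgebraically: now $\mathrm{dist}(\cdot,Z(\phi))$ and the whole configuration are semialgebraic, and by quantifier elimination the set of pairs $\big(C,(\text{coefficients of }\phi)\big)$ for which the inequality holds on a fixed box $[-M,M]^d$ is a semialgebraic subset of a fixed finite-dimensional parameter space; a compactness argument in the coefficient space (or the explicit bounds of Solern\'o \cite{Sol}) then yields a uniform $C=C(d,D)$.

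The step I expect to be the genuine obstacle — and the reason the inequality is not soft — is the resolution input in the second paragraph. A purely compactness or curve-selection argument does not work, because the competing inequalities are indexed by the exponent $C$ and the set $\{(z,C):|\phi(z)|<c\,\mathrm{dist}(z,Z(\phi))^{C}\}$ is not semianalytic (the map $t\mapsto t^{C}$ is not), so one genuinely needs structural information about $Z(\phi)$ near its singular points. Controlling the effective dependence of $C$ on $D$ in the polynomial case is a secondary difficulty.
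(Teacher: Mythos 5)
The paper does not prove this statement at all: it is quoted as a classical black box, with the analytic case attributed to \L{}ojasiewicz \cite{Lo} and the uniform-in-degree polynomial case to Solern\'o \cite{Sol}. So there is no internal proof to compare against, and the relevant question is only whether your outline is a sound reconstruction of the cited literature. It essentially is. The reduction to a local estimate near $Z(\phi)$ is correct (with the usual caveat that the statement is vacuous or needs a convention when $Z(\phi)=\emptyset$, an issue the paper's own formulation shares), and the local argument via Hironaka monomialization --- $\phi\circ\pi = \mathrm{unit}\cdot u^{a}$, lower-bounding the monomial by $m^{A}$ with $m$ the smallest relevant coordinate, and upper-bounding $\mathrm{dist}(\pi(u),Z(\phi))$ by $Lm$ using that zeroing the minimizing coordinate lands on the divisor --- is the standard modern proof (cf.\ Bierstone--Milman), correct modulo routine chart bookkeeping. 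Your closing remark about why soft compactness cannot work is also accurate and is the right thing to flag as the essential difficulty.

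The one sub-argument I would not accept as written is the quantifier-elimination-plus-compactness route to the uniformity of $C$ in $d$ and $\deg\phi$. As you yourself observe two sentences later, the condition $|\phi(z)|\ge c\,\mathrm{dist}(z,Z(\phi))^{C}$ is not semialgebraic jointly in $(z,C)$, so "the set of pairs $(C,\text{coefficients})$" is not a semialgebraic subset of a finite-dimensional space; and for fixed integer $C$ the sets of admissible coefficient tuples form an increasing union of semialgebraic sets whose stabilization (or an open-cover argument) is exactly what needs proof, since $Z(\phi)$ varies discontinuously with the coefficients. This is genuinely the content of the effective \L{}ojasiewicz inequalities, so the correct move is the one you offer as a fallback: cite Solern\'o \cite{Sol} directly, which is precisely what the paper does.
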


\begin{thm}[Stratification of the vanishing locus of an analytic function \cite{Lo2}]\label{stratificationRealAnalSet}
Let $U\subset\RR^d$ be an open set that contains $[0,1]^d$ and let $\phi\colon U \to\RR$ be real analytic and not identically zero. Then there is a finite set $\mathcal{M}$ of smooth (proper) submanifolds of $\RR^d$ so that $Z(\phi)\cap [0,1]^d \subset \bigcup_{M\in\mathcal{M}}M$. 
\end{thm}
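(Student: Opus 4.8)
\emph{Proof sketch.} The plan is to obtain Theorem~\ref{stratificationRealAnalSet} directly from \L{}ojasiewicz's stratification of semianalytic sets, using the compactness of $[0,1]^d$ to pass from a locally finite family of strata to a finite one. First I would recall that a subset of an open set $V\subseteq\RR^d$ is semianalytic if near each of its points it is a finite Boolean combination of sets $\{f=0\}$ and $\{g>0\}$ with $f,g$ real analytic. Since $\phi$ is real analytic, $Z(\phi)$ is semianalytic in $U$, and since $[0,1]^d=\{x:0\le x_i\le 1,\ i=1,\dots,d\}$ is cut out by polynomial inequalities, the set $S:=Z(\phi)\cap[0,1]^d$ is semianalytic as a subset of $U$. \L{}ojasiewicz's theorem \cite{Lo2} then supplies a partition $S=\bigsqcup_{\beta}M_\beta$ into pairwise disjoint connected real analytic submanifolds $M_\beta$ of $U$ (hence smooth submanifolds of $\RR^d$, since $U$ is open in $\RR^d$) which is locally finite in $U$.

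Next I would invoke compactness. We may assume $U$ is connected, replacing it by the connected component that contains $[0,1]^d$; this does not change $S$, and now ``$\phi\not\equiv 0$'' forces $Z(\phi)$, and hence $S$, to have empty interior in $\RR^d$. Since $[0,1]^d$ is a compact subset of the open set $U$ and $\{M_\beta\}$ is locally finite in $U$, only finitely many $M_\beta$ meet $[0,1]^d$; but each $M_\beta\subseteq S\subseteq[0,1]^d$, so the family $\{M_\beta\}$ is finite. Setting $\mathcal{M}=\{M_\beta\}$ gives $Z(\phi)\cap[0,1]^d=\bigcup_{M\in\mathcal{M}}M$. Finally I would note that each $M\in\mathcal{M}$ has dimension at most $d-1$: a $d$-dimensional submanifold of $\RR^d$ is open, and an open subset of $Z(\phi)$ would contradict the fact that $Z(\phi)$ has empty interior (equivalently, the identity theorem for real analytic functions). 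Thus $\mathcal{M}$ is a finite collection of proper smooth submanifolds covering $Z(\phi)\cap[0,1]^d$, as required.

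The only content beyond quoting \L{}ojasiewicz's theorem is the reduction from ``locally finite'' to ``finite'', and I do not expect a genuine obstacle there. The point needing a little care is that the relevant stratification is of $S=Z(\phi)\cap[0,1]^d$ as a semianalytic subset of the open set $U$ (so that local finiteness is with respect to $U$, and the finitely many strata meeting the \emph{compact} set $[0,1]^d$ exhaust $S$), rather than a stratification living only on the noncompact $U$; one also wants each stratum to be literally an embedded submanifold of $\RR^d$, which is automatic since $U$ is open. Should one prefer a self-contained argument, one could instead induct on $d$, splitting off the smooth hypersurface $\{\nabla\phi\neq 0\}\cap Z(\phi)$ via the implicit function theorem and recursing into the lower-dimensional analytic set cut out by $\phi$ together with its vanishing lower-order derivatives; but bookkeeping the singular locus already requires Weierstrass-preparation-type input, which is exactly why citing \L{}ojasiewicz is the efficient route.
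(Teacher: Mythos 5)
Your argument is correct and follows exactly the route the paper intends: the paper states this result as a citation to \L{}ojasiewicz's \emph{Ensembles semianalytiques} and gives no proof, and your deduction (semianalyticity of $Z(\phi)\cap[0,1]^d$, the locally finite stratification into analytic submanifolds, compactness to extract finitely many strata, and empty interior of $Z(\phi)$ to see each stratum is proper) is the standard way to extract the stated form from that reference. The only caveat worth noting is the edge case where $U$ is disconnected and $\phi$ vanishes identically on the component containing $[0,1]^d$ — your reduction to the connected component implicitly assumes this does not happen, which is clearly the intended reading of the hypothesis.
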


We will use the following version of Whitney's decomposition of open subsets of $\RR^d$, which  can be found in \cite[Appendix J]{Gra}.
\begin{thm}[Whitney]\label{whitneyCubeDecomp}
Let $\Omega$ be an open non-empty proper subset of $\RR^d$. Then there exists a countable family $\mathcal{Q}$ of closed cubes such that
\begin{itemize}
\item $\bigcup_{Q\in\mathcal{Q}} Q = \Omega$, and the cubes in $\mathcal{Q}$ have disjoint interiors.
\item For each cube $Q\in\mathcal{Q}$, we have $2Q\not\subset\Omega$, where $2Q$ denotes the 2-fold dilate of $Q$. 
\end{itemize}
\end{thm}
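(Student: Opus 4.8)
The plan is to realize $\mathcal{Q}$ as a Whitney-type decomposition built from dyadic cubes, sorted by their distance to the complement $\Omega^c$. For $k\in\ZZ$ let $\mathcal{D}_k=\{2^{-k}(m+[0,1]^d):m\in\ZZ^d\}$ be the grid of closed dyadic cubes of side $2^{-k}$, and recall that any two dyadic cubes are either nested or have disjoint interiors. Fix a dimensional constant $\Lambda=\Lambda(d)>\sqrt d$. Since $\Omega$ is open and proper, $0<\operatorname{dist}(x,\Omega^c)<\infty$ for every $x\in\Omega$, so the shells
\begin{equation*}
\Omega_k=\bigl\{x\in\Omega:\Lambda\,2^{-k}<\operatorname{dist}(x,\Omega^c)\le 2\Lambda\,2^{-k}\bigr\},\qquad k\in\ZZ,
\end{equation*}
partition $\Omega$. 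I then set $\mathcal{Q}_k^{\circ}=\{Q\in\mathcal{D}_k:Q\cap\Omega_k\neq\emptyset\}$ and $\mathcal{Q}^{\circ}=\bigcup_{k}\mathcal{Q}_k^{\circ}$. (Organizing by distance to $\Omega^c$ rather than taking the maximal dyadic subcube of $\Omega$ through each point is essential, since such a maximal cube need not exist, e.g. for $\Omega=\RR^d\setminus\{0\}$.)

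Next I would verify three facts about $\mathcal{Q}^{\circ}$. \emph{(i) $\mathcal{Q}^\circ$ covers $\Omega$:} any $x\in\Omega$ lies in a unique shell $\Omega_k$ and in some $Q\in\mathcal{D}_k$, so $x\in Q\in\mathcal{Q}_k^{\circ}$. \emph{(ii) Each $Q\in\mathcal{Q}^{\circ}$ lies in $\Omega$:} if $Q\in\mathcal{Q}_k^{\circ}$, pick $x\in Q\cap\Omega_k$; then for every $y\in Q$, $\operatorname{dist}(y,\Omega^c)\ge\operatorname{dist}(x,\Omega^c)-|x-y|>\Lambda\,2^{-k}-\sqrt d\,2^{-k}>0$, so $y\in\Omega$. \emph{(iii) A fixed dilate of each $Q\in\mathcal{Q}^\circ$ meets $\Omega^c$:} with $x$ as in (ii), $\operatorname{dist}(x,\Omega^c)\le 2\Lambda\,2^{-k}$ and $\Omega^c$ is closed, so some $w\in\Omega^c$ has $|x-w|\le 2\Lambda\,2^{-k}$; since $x\in Q$, the point $w$ lies within sup-norm distance $(2\Lambda+\tfrac12)2^{-k}$ of the center of $Q$, hence $w\in CQ$ with $C=C(d)=4\Lambda+1$, so $CQ\not\subset\Omega$. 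The same computation shows the side lengths of all cubes of $\mathcal{Q}^{\circ}$ through a fixed $x$ are comparable to $\operatorname{dist}(x,\Omega^c)$, so there are $O(1)$ of them and among them a largest.

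Finally I would prune to $\mathcal{Q}=\{Q\in\mathcal{Q}^{\circ}:Q$ is not strictly contained in any member of $\mathcal{Q}^{\circ}\}$. By the last remark every $x\in\Omega$ lies in the largest cube of $\mathcal{Q}^{\circ}$ through it, which is maximal in $\mathcal{Q}^{\circ}$ and hence in $\mathcal{Q}$; combined with (i)--(ii) this gives $\bigcup_{Q\in\mathcal{Q}}Q=\Omega$. The cubes of $\mathcal{Q}$ are pairwise non-nested, so, being dyadic, they have pairwise disjoint interiors, and (iii) is inherited. This yields Theorem~\ref{whitneyCubeDecomp} with the $2$-fold dilate replaced by $CQ$ for a dimensional constant $C$. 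The one genuine obstacle is sharpening $C$ to the claimed value $2$: the shell construction necessarily produces cubes with $\operatorname{dist}(Q,\Omega^c)\sim\operatorname{diam}(Q)$, which only gives $CQ\not\subset\Omega$ for a dimensional $C>2$, whereas the factor $2$ requires Whitney's original, more careful selection of cubes (see \cite[Appendix J]{Gra}). Since only the qualitative form of (iii) is ever used below, I would simply quote that reference for the stated form.
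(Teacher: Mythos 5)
Your construction is correct, and it is essentially the standard Whitney decomposition carried out in the cited reference \cite{Gra}: stratify $\Omega$ into dyadic shells according to $\operatorname{dist}(\cdot,\Omega^c)$, take the dyadic cubes at the matching scale, and prune to maximal cubes. Steps (i)--(iii), the observation that only $O_d(1)$ cubes of $\mathcal{Q}^{\circ}$ pass through a given point (so that maximal ones exist and still cover), and the disjointness of interiors via the dyadic nesting property are all sound. Since the paper offers no proof of this theorem beyond the citation, your writeup is a faithful reconstruction of the argument behind it.

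The one substantive remark concerns the dilation constant, where your deference to the reference is misplaced in an instructive way: you cannot recover the factor $2$ by quoting \cite{Gra}, because the decomposition there satisfies $\sqrt{d}\,\ell(Q)\le \operatorname{dist}(Q,\Omega^c)$, which forces $2Q\subset\Omega$ for every Whitney cube --- the literal statement with the constant $2$ is the \emph{opposite} of what the standard construction produces. The ``$2$'' in the theorem as stated should simply be read as a dimensional constant $C(d)$, i.e.~as the assertion $\operatorname{dist}(Q,\Omega^c)\lesssim_d \ell(Q)$, which is exactly what your step (iii) delivers. This is also all that is ever used downstream: in Lemmas \ref{semiAnalyticBoundaryCubeDecomp} and \ref{cubesWhereAnalyticFLarge} the only consequence extracted is that a Whitney cube of $\Omega$ not contained in $N_s(Z(\phi))$ must have sidelength $\gtrsim s$, and any fixed dilate suffices for that, at the cost of implicit constants. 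So your proof is complete for the version of the theorem the paper actually needs, and no appeal to the reference for the sharper constant is necessary --- or indeed possible.
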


\subsection{Thin neighborhoods of $Z(\phi)$ and non-concentration}
In this section we will show that thin neighborhoods of the vanishing locus of an analytic function have small intersection with non-concentrated sets. 
\begin{lem}
\label{neighborhoodAnalyticVariety}
Let $M$ be a smooth (proper) submanifold of $\RR^d$, and suppose $M \cap [0,1]^d$ is compact. Then there is a constant $C>0$ so that the following holds. 

Let $A_1,\ldots,A_d\subset[0,1]$ be sets and let $A = A_1\times\cdots\times A_d$. Let $\delta>0$ and suppose that for all intervals $J$ of length at least $\delta$, each set $A_i$ satisfies the non-concentration condition
\begin{equation*}
\mathcal{E}_{\delta}(A_i \cap J) \leq \delta^{-\eta}|J|^{\kappa}\mathcal{E}_{\delta}(A_i).
\end{equation*}
Then
\begin{equation}\label{deltaNbhdEstimate}
\mathcal{E}_{\delta}\big(  A\cap N_{\delta}(M) \big) \leq C \mathcal{E}_{\delta}(A) / \min_{i}\mathcal{E}_{\delta}(A_i).
\end{equation}
Furthermore, for each $s>\delta$ we have
\begin{equation}\label{thickenedNbhdEstimate}
\mathcal{E}_{\delta}\big(  A\cap N_{s}(M) \big) \leq C \delta^{-\eta} s^{\kappa}\mathcal{E}_{\delta}(A).
\end{equation}
\end{lem}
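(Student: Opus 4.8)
The plan is to reduce both estimates to a single geometric fact: a $\delta$-neighborhood (resp.\ $s$-neighborhood) of a smooth proper submanifold $M$ of $\RR^d$, intersected with the unit cube, can be covered by $O(\delta^{-(d-1)})$ (resp.\ $O(s^{d-1}\delta^{-d})$) balls of radius $\delta$, and moreover this cover can be arranged so that each $\delta$-ball $Q$ in the cover has the property that its projection onto \emph{some} coordinate axis is an interval of length $O(\delta)$ that ``does no better than average'' — but that is automatic — while its extent in the $M$-transverse directions forces the relevant coordinate set $A_i$ to contribute only $O(1)$ $\delta$-intervals inside $Q$. More precisely: since $M\cap[0,1]^d$ is a compact smooth submanifold, we may cover it by finitely many coordinate charts in each of which $M$ is a graph over $(d-1)$ of the coordinates, say over $x_2,\dots,x_d$ after relabeling, with the remaining coordinate $x_1$ a Lipschitz function of the others. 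This is the step where the compactness and smoothness of $M\cap[0,1]^d$ is used, and the number of charts (hence the constant $C$) depends only on $M$.

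Within one such chart, $N_s(M)$ is contained in the set $\{|x_1 - g(x_2,\dots,x_d)| \lesssim s\}$ for a Lipschitz function $g$. First I would handle \eqref{thickenedNbhdEstimate}. Partition $[0,1]^{d-1}$ (the $(x_2,\dots,x_d)$-space) into $\delta$-cubes; there are $\sim\delta^{-(d-1)}$ of them. Over each such $\delta$-cube $\tau$, the function $g$ varies by $O(\delta)$, so $N_s(M)$ lies over $\tau$ inside a slab $\tau \times J_\tau$ where $J_\tau$ is an interval in the $x_1$-variable of length $\lesssim s + \delta \lesssim s$. By the non-concentration hypothesis applied to $A_1$ with the interval $J_\tau$, we get $\mathcal{E}_\delta(A_1 \cap J_\tau) \le \delta^{-\eta} s^\kappa \mathcal{E}_\delta(A_1)$. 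Hence the number of $\delta$-balls of $A$ meeting $N_s(M)$ over $\tau$ is at most
\[
\mathcal{E}_\delta(A_1 \cap J_\tau)\cdot \prod_{i=2}^{d}\mathcal{E}_\delta(A_i\cap \tau_i) \le \delta^{-\eta} s^\kappa \mathcal{E}_\delta(A_1)\cdot \prod_{i=2}^{d}\mathcal{E}_\delta(A_i\cap \tau_i),
\]
where $\tau_i$ is the $i$-th edge of $\tau$. Summing over the $\tau$'s, the product $\prod_{i\ge 2}\mathcal{E}_\delta(A_i\cap\tau_i)$ telescopes (the $\tau_i$ partition $[0,1]$ for each $i$, and the covering number is subadditive over such a partition) to $\prod_{i\ge 2}\mathcal{E}_\delta(A_i)$, giving $\mathcal{E}_\delta(A\cap N_s(M))\lesssim \delta^{-\eta}s^\kappa \mathcal{E}_\delta(A)$ per chart, and summing over the finitely many charts yields \eqref{thickenedNbhdEstimate}.

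For \eqref{deltaNbhdEstimate} I would run the same argument with $s = \delta$, but now the slab $J_\tau$ has length $\lesssim \delta$, so instead of invoking non-concentration I simply use $\mathcal{E}_\delta(A_1\cap J_\tau)\lesssim 1$. This gives $\mathcal{E}_\delta(A\cap N_\delta(M))\lesssim \prod_{i=2}^d \mathcal{E}_\delta(A_i)$. Since the chart was chosen after relabeling coordinates, the exposed ``graph direction'' could a priori be any of the $d$ coordinates; to get the stated bound $\mathcal{E}_\delta(A)/\min_i \mathcal{E}_\delta(A_i)$ I would simply note that in each fixed chart the graph direction is a specific index $i_0$, so over that chart we get the bound $\mathcal{E}_\delta(A)/\mathcal{E}_\delta(A_{i_0}) \le \mathcal{E}_\delta(A)/\min_i\mathcal{E}_\delta(A_i)$; summing over the finitely many charts absorbs the loss into $C$. \emph{The main obstacle} is bookkeeping rather than conceptual: making sure the chart decomposition of the compact set $M\cap[0,1]^d$ is finite and that in each chart $M$ really is a Lipschitz graph over $d-1$ coordinates with Lipschitz constant and chart count controlled purely by $M$ — this is where one must be a little careful near points where $M$ is ``tangent'' to a coordinate hyperplane, but a standard compactness/rotation argument (cover by charts in which $M$ is a graph, extract a finite subcover) resolves it, and the resulting constants are harmless since they are allowed to depend on $M$.
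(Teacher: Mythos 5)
Your proposal is correct and follows essentially the same route as the paper: cover the compact set $M\cap[0,1]^d$ by finitely many pieces on each of which some coordinate $x_{i_0}$ is a Lipschitz function of the remaining coordinates (the paper phrases this as $e_{i_0}\notin T_pM$ plus a finite subcover, and tiles with $s$-cubes having bounded multiplicity per column rather than $\delta$-cubes in the base with $O(s)$-fibers, but the counting is identical), then apply the one-dimensional non-concentration of $A_{i_0}$ on each fiber interval of length $O(s)$, and the trivial bound $\mathcal{E}_{\delta}(A_{i_0}\cap J_\tau)\lesssim 1$ when $s=\delta$. The only phrasing to adjust is the aside about a ``rotation argument'': one must select a coordinate direction transverse to $T_pM$ (as your ``after relabeling'' already does) rather than rotate, since the non-concentration hypotheses are tied to the fixed axes.
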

\begin{proof}
For each $p\in M\cap [0,1]^d$, let $i(p)\in \{1,\ldots,d\}$ be an index so that the coordinate unit vector $e_{i(p)}$ is not contained in the tangent space $T_p(M)$. If $t>0$ is selected sufficiently small, then the set $M_p = M\cap B(p,t)$ is a smooth manifold, and there exists a constant $C_p$ so that for all $q,q'\in M_p$, we have
\begin{equation}\label{eiCoordinateBd}
|e_{i(p)}\cdot q - e_{i(p)}\cdot q'|\leq  C_p|\pi_{p}(q)-\pi_{p}(q')|,
\end{equation}
where $\pi_p\colon\RR^{d}\to\RR^{d-1}$ is the orthogonal projection to the $(d-1)$-dimensional subspace $e_{i(p)}^\perp\subset\RR^d$. 
Indeed, the above inequality follows from the fact that $\inf_{v\in T_p(M)}\angle(e_{i(p)},v)>0$, and the map $q\mapsto T_q(M)$ is continuous. 

Since $M\cap [0,1]^d$ is compact, we can cover $M\cap [0,1]^d$ by a finite set $\mathcal{M}$ of manifolds of the form $M_p$. It suffices to prove \eqref{deltaNbhdEstimate}  and \eqref{thickenedNbhdEstimate} for each of these manifolds. Let $C_0=\max_{M_p\in\mathcal{M}} C_p$. 

Let $M_p\in\mathcal{M}$. Let $s\geq\delta$ and let $\mathcal{Q}$ be the set of cubes of side-length $s$ aligned with the grid $(s\ZZ)^d$ that intersect $M_p$. We claim that at most $C_1$ cubes from $\mathcal{Q}$ can have the same projection under $\pi_{p}$, for some $C_1$ that depends only on $d$ and $C_0$. Indeed, if $C_1>2$ cubes have the same projection under $e_{i(p)}$, then select points $q,q'\in M_p$ from two cubes that have greatest distance. Since there are at least $C_1-2$ cubes in-between the cubes containing $q$ and $q'$, we have $|e_{i(p)}\cdot q - e_{i(p)}\cdot q'|\geq (C_1-2)s$. On the other hand, since all of the cubes (and in particular, the cubes containing $q$ and $q'$) have the same projection under $\pi_p$, we have that $|\pi_{p}(q)-\pi_{p}(q')|\leq \sqrt{d-1}s$. Thus by \eqref{eiCoordinateBd}, we must have $(C_1-2)s \leq C_0\sqrt{d-1}s,$ so $C_1\leq C_0\sqrt{d-1}+2$, which proves our claim. 
When $s=\delta$, this immediately implies \eqref{deltaNbhdEstimate}.

It remains to prove \eqref{thickenedNbhdEstimate}. For each such cube $Q\in\mathcal{Q}$ we have the estimate 
\begin{equation}\label{coveringNumberSCube}
\mathcal{E}_{\delta}(Q\cap A) \leq \delta^{-\eta}s^{\kappa}\mathcal{E}_{\delta}(A_{i(p)}) \mathcal{E}_{\delta}( \pi_{p}(A\cap Q)).
\end{equation}
Summing \eqref{coveringNumberSCube} over all cubes in $\mathcal{Q}$ and using the fact that at most $C_1$ cubes have the same projection under $\pi_p$, we obtain
\begin{equation*}
\begin{split}
\mathcal{E}_{\delta}(N_s(M)\cap A)&\leq C_1 \delta^{-\eta}s^{\kappa}\mathcal{E}_{\delta}(A_{i(p)})\mathcal{E}_{\delta}( \pi_p(A))\\
&\leq C_1 \delta^{-\eta}s^{\kappa}\mathcal{E}_{\delta}(A).\qedhere
\end{split}
\end{equation*}
\end{proof}

\begin{rem}
Note that in the special case where where exists an index $i$ so that $e_i$ is not contained in $T_p(M)$, for every $p\in M$, it is sufficient in Lemma~\ref{neighborhoodAnalyticVariety} to require the non-concentration condition only for $A_i$.
\end{rem}

Combining Lemma~\ref{neighborhoodAnalyticVariety} and Theorem~\ref{stratificationRealAnalSet}, we obtain the following.

\begin{cor}\label{coveringNbhAnalyticSet}
Let $U\subset\RR^d$ be an open set that contains $[0,1]^d$ and let $\phi\colon U \to\RR$ be real analytic and not identically zero. Then there is a constant $C$ so that the following holds. 

Let $A_1,\ldots,A_d\subset[0,1]$ be sets and let $A = A_1\times\cdots\times A_d$. Let $\delta>0$ and suppose that for all intervals $J$ of length at least $\delta$, each set $A_i$ satisfies the non-concentration condition
\begin{equation*}
\mathcal{E}_{\delta}(A_i \cap J) \leq \delta^{-\eta}|J|^{\kappa}\mathcal{E}_{\delta}(A_i).
\end{equation*}
Then
\begin{equation}
\mathcal{E}_{\delta}\big(  A\cap N_{\delta}(Z(\phi)) \big) \leq C \mathcal{E}_{\delta}(A) / \min_i \mathcal{E}_{\delta}(A_i),
\end{equation}
and for each $s>\delta$ we have
\begin{equation}\label{CoveringThinNbhdZPhi}
\mathcal{E}_{\delta}\big(  A\cap N_{s}(Z(\phi)) \big) \leq C \delta^{-\eta} s^{\kappa}\mathcal{E}_{\delta}(A).
\end{equation}
\end{cor}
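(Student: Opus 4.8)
The plan is to combine the stratification theorem with the per-manifold bound from Lemma~\ref{neighborhoodAnalyticVariety}, being careful that the constant coming out is uniform. First I would apply Theorem~\ref{stratificationRealAnalSet} to $\phi$: since $\phi$ is real analytic on the open set $U\supset [0,1]^d$ and not identically zero, there is a finite collection $\mathcal{M}$ of smooth proper submanifolds of $\RR^d$ with $Z(\phi)\cap[0,1]^d\subset\bigcup_{M\in\mathcal{M}}M$. Each $M\in\mathcal{M}$ is a submanifold of $\RR^d$; replacing $M$ by its closure intersected with a slightly larger box if needed, we may assume $M\cap[0,1]^d$ is compact, so that Lemma~\ref{neighborhoodAnalyticVariety} applies to each $M$.

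Next I would write the covering estimate as a union bound. For any $s\geq\delta$ (including $s=\delta$), observe that
\begin{equation*}
A\cap N_s(Z(\phi)) \subset \bigcup_{M\in\mathcal{M}} \big( A\cap N_s(M)\big),
\end{equation*}
using that $A\subset[0,1]^d$ and that $Z(\phi)$ meets $[0,1]^d$ inside $\bigcup_M M$ — more precisely one should intersect with $[0,1]^d$ and note $N_s(Z(\phi))\cap[0,1]^d\subset N_s\big(Z(\phi)\cap[0,1+s]^d\big)$, which is covered by the $s$-neighborhoods of finitely many (enlarged, compact) manifold pieces; this is the only mildly delicate bookkeeping point and it is routine. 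Subadditivity of $\mathcal{E}_\delta$ then gives
\begin{equation*}
\mathcal{E}_{\delta}\big(A\cap N_s(Z(\phi))\big) \leq \sum_{M\in\mathcal{M}} \mathcal{E}_{\delta}\big(A\cap N_s(M)\big).
\end{equation*}
Now apply Lemma~\ref{neighborhoodAnalyticVariety} to each term: for $s=\delta$ we get $\mathcal{E}_{\delta}(A\cap N_\delta(M))\leq C_M\,\mathcal{E}_{\delta}(A)/\min_i\mathcal{E}_{\delta}(A_i)$, and for $s>\delta$ we get $\mathcal{E}_{\delta}(A\cap N_s(M))\leq C_M\,\delta^{-\eta}s^{\kappa}\mathcal{E}_{\delta}(A)$. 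Setting $C = \sum_{M\in\mathcal{M}} C_M$ (a finite sum, since $\mathcal{M}$ is finite, and independent of $\delta$, $A$, and $s$) and summing yields both displayed inequalities of the corollary.

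The main obstacle — such as it is — is not analytic but organizational: making sure that the manifold pieces produced by the stratification theorem can be taken with $M\cap[0,1]^d$ compact (so Lemma~\ref{neighborhoodAnalyticVariety} is literally applicable) and that the neighborhood $N_s(Z(\phi))\cap[0,1]^d$ is genuinely contained in a finite union of the $N_s(M)$, uniformly in $s\in(\delta, \mathrm{diam})$. Both are handled by shrinking to a compact exhaustion and, if necessary, working on $[-1,2]^d$ in place of $[0,1]^d$ and using that $\mathcal{M}$ may be chosen to cover $Z(\phi)$ on this larger box; the constants then only get worse by a bounded factor. Everything else is the finite union bound plus the already-proven Lemma~\ref{neighborhoodAnalyticVariety}.
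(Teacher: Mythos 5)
Your proposal is correct and follows exactly the route the paper intends: the paper gives no written proof, stating only that the corollary is obtained ``combining Lemma~\ref{neighborhoodAnalyticVariety} and Theorem~\ref{stratificationRealAnalSet},'' which is precisely your stratification-plus-finite-union-bound argument. The bookkeeping points you flag (compactness of each $M\cap[0,1]^d$ and covering $N_s(Z(\phi))\cap[0,1]^d$ by working on a slightly enlarged box inside $U$, with large $s$ being trivial) are handled appropriately.
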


Corollary~\ref{coveringNbhAnalyticSet} applied to $\phi(z)- t$, where $\phi$ is real analytic and $t\in[0,1]$, implies that for each $t$ there is a constant $C$ so that \eqref{CoveringThinNbhdZPhi} holds with this choice of $C$. The next result says that for certain carefully chosen values of $t$, 
%says that under some mild constraints, 
the constant $C$ can be taken independent of $t$. 

\begin{lem}\label{uniformBdPhiT}
Let $U\subset\RR^d$ be an open set that contains $[0,1]^d$ and let $\phi\colon U \to\RR$ be real analytic and not identically zero. Then there is a constant $C$ so that the following holds. 

Let $A_1,\ldots,A_d\subset[0,1]$ be sets and let $A = A_1\times\cdots\times A_d$. Let $\delta>0$ and suppose that for all intervals $J$ of length at least $\delta$, each set $A_i$ satisfies the non-concentration condition
\begin{equation*}
\mathcal{E}_{\delta}(A_i \cap J) \leq \delta^{-\eta}|J|^{\kappa}\mathcal{E}_{\delta}(A_i).
\end{equation*}

Let $s>\delta$ and let $s^{\kappa/2}<t_0\leq 1/2$. Then there is a number $t\in [t_0,2t_0]$ so that
\begin{equation}\label{goodChoiceOfT}
\mathcal{E}_{\delta}\big(  A\cap N_{s}(\{\phi = t\}) \big) \leq C \delta^{-\eta} s^{\kappa/2}\mathcal{E}_{\delta}(A).
\end{equation}
\end{lem}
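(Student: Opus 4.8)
The plan is to bound, for each $t$, the quantity $\mathcal{E}_{\delta}\big(A\cap N_{s}(\{\phi=t\})\big)$ by the $\delta$-covering number of a thin ``slab'' $\{|\phi-t|\lesssim s\}$, and then extract a good $t\in[t_0,2t_0]$ by a Fubini/pigeonhole argument over $t$. Fix $\rho_0>0$ with $K:=\overline{N_{\rho_0}([0,1]^d)}\subset U$; then $K$ is compact and convex, so $\phi$ is $L$-Lipschitz on $K$ with $L=\sup_{K}|\nabla\phi|$. We may assume $s<s_0$ for a suitably small constant $s_0=s_0(U,\phi,d,\kappa)>0$ (to be fixed), since if $s\ge s_0$ then $s^{\kappa/2}\ge s_0^{\kappa/2}$ is bounded below and the conclusion holds for a large constant $C$ simply because $\mathcal{E}_{\delta}\big(A\cap N_{s}(\{\phi=t\})\big)\le\mathcal{E}_{\delta}(A)$ and $\delta^{-\eta}\ge1$ (apply the non-concentration hypothesis with $J=[0,1]$). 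Choose $s_0\le \rho_0/\sqrt d$; then in particular $\delta<s<s_0$ forces $\sqrt d\,\delta<\rho_0$, so every $\delta$-grid cube meeting $A\subset[0,1]^d$ is contained in $K$, and $N_{s_0}([0,1]^d)\subset K$.

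The key observation is that
\begin{equation*}
A\cap N_{s}(\{\phi=t\})\ \subset\ \{z\in K:\ |\phi(z)-t|<Ls\}.
\end{equation*}
Indeed, if $z\in A$ lies within distance $s<s_0$ of some $w$ with $\phi(w)=t$, then $w\in N_{s_0}([0,1]^d)\subset K$, hence $|\phi(z)-t|=|\phi(z)-\phi(w)|\le L|z-w|<Ls$. Consequently, letting $\mathcal{Q}$ denote the $\delta$-grid cubes,
\begin{equation*}
\mathcal{E}_{\delta}\big(A\cap N_{s}(\{\phi=t\})\big)\ \lesssim\ \#\big\{Q\in\mathcal{Q}:\ Q\cap A\neq\emptyset,\ \phi(Q)\cap(t-Ls,t+Ls)\neq\emptyset\big\},
\end{equation*}
because any such cube $Q\subset K$ meeting $A\cap N_{s}(\{\phi=t\})$ contains a point $z$ with $\phi(z)\in\phi(Q)\cap(t-Ls,t+Ls)$. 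For $Q\subset K$ a $\delta$-cube, $\phi(Q)$ is an interval of length at most $L\sqrt d\,\delta\le L\sqrt d\,s$, so $\{t:\phi(Q)\cap(t-Ls,t+Ls)\neq\emptyset\}$ is an interval of length at most $3L\sqrt d\,s$.

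Integrating the displayed bound over $t\in[t_0,2t_0]$ and exchanging sum and integral (everything is nonnegative),
\begin{equation*}
\int_{t_0}^{2t_0}\mathcal{E}_{\delta}\big(A\cap N_{s}(\{\phi=t\})\big)\,dt\ \lesssim\ \sum_{\substack{Q\in\mathcal{Q}\\ Q\cap A\neq\emptyset}} 3L\sqrt d\,s\ \lesssim\ Ls\,\mathcal{E}_{\delta}(A),
\end{equation*}
so there is some $t\in[t_0,2t_0]$ with $\mathcal{E}_{\delta}\big(A\cap N_{s}(\{\phi=t\})\big)\lesssim \tfrac{Ls}{t_0}\,\mathcal{E}_{\delta}(A)$. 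Finally, using $t_0>s^{\kappa/2}$ together with $0<\kappa\le1$ and $s<1$ we get $\tfrac{s}{t_0}<s^{\,1-\kappa/2}\le s^{\kappa/2}$, so $\mathcal{E}_{\delta}\big(A\cap N_{s}(\{\phi=t\})\big)\le C\,s^{\kappa/2}\mathcal{E}_{\delta}(A)\le C\,\delta^{-\eta}s^{\kappa/2}\mathcal{E}_{\delta}(A)$ for a suitable $C=C(U,\phi,d,\kappa)$, as claimed. I expect no serious obstacle here: the argument uses only that $\phi$ is Lipschitz near $[0,1]^d$ (the non-concentration hypothesis and the product structure of $A$ are not actually needed, beyond the trivial use of $\delta^{-\eta}\ge1$), and the only points requiring care are the degenerate regimes — $s$ not small, where $N_{s}(\{\phi=t\})$ could leave a fixed compact subset of $U$ or $\{\phi=t\}$ could be empty or all of $U$, and $\delta$ not small — all of which are handled as above by enlarging the constant $C$, since then $s^{\kappa/2}$ is bounded below.
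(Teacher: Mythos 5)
Your proof is correct, and it takes a genuinely different and more elementary route than the paper's. The paper proves the lemma by lifting to $d+1$ variables: it applies Corollary \ref{coveringNbhAnalyticSet} to $\Phi(z,t)=\phi(z)-t$ and the sets $\tilde A_1,\dots,\tilde A_d,[0,1]$ (where $\tilde A_i$ is a $2\delta$-thickening, used to pass between covering numbers and volumes), obtains $\int_0^1 \vol_d\big(\tilde A\cap N_s(\{\phi=t\})\big)\,dt\lesssim \delta^{-\eta}s^{\kappa}\vol_d(\tilde A)$, and then runs a Chebyshev argument with threshold $a=s^{-\kappa/2}$ so that the exceptional set of $t$ has measure at most $s^{\kappa/2}<t_0=|[t_0,2t_0]|$; this route leans on the analytic stratification machinery and the non-concentration of the $A_i$, and that is where the factors $\delta^{-\eta}$ and $s^{\kappa}$ (degraded to $s^{\kappa/2}$) come from. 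You instead observe that $N_s(\{\phi=t\})\cap[0,1]^d$ sits inside the sublevel set $\{|\phi-t|<Ls\}$ by a Lipschitz bound, and run Fubini directly over $\delta$-grid cubes: each cube meeting $A$ contributes to an interval of $t$ of length $O(Ls)$, so the average over $t\in[t_0,2t_0]$ is $O(Ls/t_0)\mathcal{E}_\delta(A)$, and $s/t_0<s^{1-\kappa/2}\le s^{\kappa/2}$. This is cleaner, uses neither analyticity nor the non-concentration hypothesis, and in fact yields a strictly stronger conclusion (no $\delta^{-\eta}$ loss, and $s/t_0$ in place of $s^{\kappa/2}$). Two small caveats, both of which you already flag: the final inequality $s^{1-\kappa/2}\le s^{\kappa/2}$ requires $\kappa\le 1$, which is not restated in the lemma but is the paper's standing assumption ($0<\kappa\le\alpha<1$) in every application; and the degenerate regimes ($s$ or $\delta$ not small) must be absorbed into the constant, which you do correctly.
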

\begin{proof}
For each index $i$, let $\tilde A_i$ be the $2\delta$-neighborhood of $A_i$ and let $\tilde A = \tilde A_1\times\ldots\times\tilde A_d$. Then for each $t\in [0,1]$, 
\begin{equation}\label{compareCoveringNumberVolume}
\mathcal{E}_{\delta}\big(  A\cap N_{s}(\{\phi = t\}) \big)\lesssim \delta^{-d}|\tilde A \cap N_s(\{\phi=t\})|.
\end{equation}

Apply Corollary \ref{coveringNbhAnalyticSet} to the analytic function $\Phi(z,t) = \phi(z)-t$ (which has domain $U\times\RR\supset [0,1]^{d+1}$) and the sets $\tilde A_1,\ldots,\tilde A_d,$ and $\tilde A_{d+1}=[0,1]$ (note that $\tilde A_{d+1}$ satisfies the non-concentration hypothesis of Corollary \ref{coveringNbhAnalyticSet}). We have that
\begin{equation}\label{consequenceOfCorcoveringNbhAnalyticSet}
\mathcal{E}_{\delta}\big( (\tilde A \times \tilde A_{d+1})\cap N_s(Z(\Phi)) \big) \leq C_0 \delta^{-\eta} s^{\kappa}\mathcal{E}_{\delta}(\tilde A \times \tilde A_{d+1}).
\end{equation}
Since each $\tilde A_i$ is a union of intervals of length $2\delta$ and $s>\delta$, we have
\begin{equation*}
\mathcal{E}_{\delta}\big( (\tilde A \times \tilde A_{d+1})\cap N_s(Z(\Phi)) \big)\sim\delta^{-(d+1)}\vol_{d+1}\big( (\tilde A \times \tilde A_{d+1})\cap N_s(Z(\Phi)) \big),
\end{equation*}
and 
\begin{equation*}
\mathcal{E}_{\delta}(\tilde A \times \tilde A_{d+1})\sim\delta^{-(d+1)}\vol_{d+1}(\tilde A \times \tilde A_{d+1}).
\end{equation*}
Thus by \eqref{consequenceOfCorcoveringNbhAnalyticSet}, we have
\begin{equation}
\begin{split}
\int_0^1 \vol_{d}\big( \tilde A\cap N_s(\{\phi = t\} ) \big)dt
&=\vol_{d+1}\big(\bigcup_{t\in [0,1]} (\tilde A\cap N_s(\{\phi = t\}) \big)\\
&\leq \vol_{d+1}\big( (\tilde A \times\tilde A_{d+1})\cap N_s(Z(\Phi))\big)\\
& \lesssim C_0 \delta^{-\eta} s^{\kappa}\vol_{d+1}(\tilde A \times \tilde A_{d+1})\\
& = C_0 \delta^{-\eta} s^{\kappa}\vol_d(\tilde A ),
\end{split}
\end{equation}
where on the last line we used the fact that $\tilde A_{d+1}=[0,1]$. Thus there is a constant $C_1$ so that for each $a\geq 1$, 
\begin{equation*}
\Big|\Big\{ t\in [0,1]\colon \vol_{d}\big( \tilde A \cap N_s(\{\phi = t\} ) \big)\geq aC_1 \delta^{-\eta} s^{\kappa}\vol_d(\tilde A ) \Big\}\Big| \leq 1/a.
\end{equation*}
Selecting $a = s^{-\kappa/2}$, and using \eqref{compareCoveringNumberVolume}, we conclude that there exists $t\in [t_0,2t_0]$ so that
\begin{equation*}
\begin{split}
\mathcal{E}_{\delta}\big(  A\cap N_{s}(\{\phi = t\}) \big) 
& \lesssim \delta^{-d}|\tilde A \cap N_s(\{\phi=t\})|\\
& \lesssim C_1 \delta^{d-\eta} s^{\kappa/2}|\tilde A|\\
& \lesssim C_1 \delta^{-\eta} s^{\kappa/2}\mathcal{E}_{\delta}(A).\qedhere
\end{split}
\end{equation*}
\end{proof}

\subsection{Cutting non-concentrated sets into cubes}
In this section, we will show that if $\phi\colon U \to\RR$ is real analytic and if $A=A_1\times\ldots\times A_d$ is a product of non-concentrated sets contained in $U$, then most of $A$ can be covered by a small number of axis-parallel cubes, so that $\phi$ is well behaved on each of these cubes.

\begin{lem}\label{semiAnalyticBoundaryCubeDecomp}
Let $U\subset\RR^d$ be an open set that contains $[0,1]^d$ and let $\phi\colon U \to\RR$ be real analytic and not identically zero. Let $V\subset [0,1]^d$ and suppose that $\operatorname{bdry}(V) \subset Z(\phi)$. 
%Then there exists a constant $c>0$ (small) so that the following holds.
Let $A_1,\ldots,A_d\subset[0,1]$ be sets and let $A = A_1\times\cdots\times A_d$. Let $0<\eta<\kappa$, let $\delta>0$, and suppose that for all intervals $J$ of length at least $\delta$, each set $A_i$ satisfies the non-concentration condition
\begin{equation*}
\mathcal{E}_{\delta}(A_i \cap J) \leq \delta^{-\eta}|J|^{\kappa}\mathcal{E}_{\delta}(A_i).
\end{equation*}
%
%Suppose that 
%\begin{equation}\label{ACapULarge}
%\mathcal{E}_{\delta}(A\cap U) \geq c\delta^{\kappa-\eta}\mathcal{E}_{\delta}(A).
%\end{equation}
Then for each
\begin{equation}\label{restrictionOnP}
p\geq  \delta^{\kappa-\eta}\frac{\mathcal{E}_{\delta}(A)}{\mathcal{E}_{\delta}(A\cap V)},
\end{equation}
there exists a set $\mathcal{Q}$ of cubes with disjoint interiors, each of which is contained in $S$, with 
\begin{equation}\label{notTooManyCubes}
\#\mathcal{Q}\leq  \delta^{-\frac{d \eta}{\kappa}}p^{\frac{-d}{\kappa}}\Big(\frac{\mathcal{E}_{\delta}(A\cap V)}{\mathcal{E}_{\delta}(A)}\Big)^{-\frac{d}{\kappa}},
\end{equation}
and 
\begin{equation}\label{SsetminusCubesSmall}
\mathcal{E}_{\delta}\left(A\cap V\setminus\bigcup_{Q\in\mathcal{Q}}Q\right)\lesssim p\mathcal{E}_{\delta}(A\cap V). 
\end{equation} 
\end{lem}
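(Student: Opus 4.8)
The plan is to apply Whitney's decomposition (Theorem~\ref{whitneyCubeDecomp}) to the open set $\Omega:=\operatorname{int}(V)$, retain the Whitney cubes that are not too small, and discard the rest. The first step is the topological observation that $\partial\Omega\subseteq Z(\phi)$: since $\Omega=\operatorname{int}(V)$ we have $\partial\Omega=\overline{\operatorname{int}(V)}\setminus\operatorname{int}(V)\subseteq\overline V\setminus\operatorname{int}(V)=\operatorname{bdry}(V)$, and by hypothesis $\operatorname{bdry}(V)\subseteq Z(\phi)$; moreover $\overline\Omega\subseteq[0,1]^d$, so $\partial\Omega$ is a compact subset of $Z(\phi)$. (If $\Omega=\emptyset$ then $V\subseteq\operatorname{bdry}(V)\subseteq Z(\phi)$, so $A\cap V\subseteq A\cap N_\delta(Z(\phi))$; taking $\mathcal Q=\emptyset$, the bound \eqref{notTooManyCubes} is trivial and \eqref{SsetminusCubesSmall} follows from Corollary~\ref{coveringNbhAnalyticSet} together with \eqref{restrictionOnP}. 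I would dispose of this degenerate case at the outset.)

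Next I would fix the threshold $\rho:=\bigl(p\,\delta^{\eta}\,\mathcal E_\delta(A\cap V)/\mathcal E_\delta(A)\bigr)^{1/\kappa}$, and let $\mathcal Q$ be the collection of Whitney cubes of $\Omega$ of side length at least $\rho$. The hypothesis \eqref{restrictionOnP} is precisely the statement $\rho\geq\delta$, which is what we will need to invoke Corollary~\ref{coveringNbhAnalyticSet}. Each cube of $\mathcal Q$ is contained in $\Omega\subseteq V$ and the cubes have pairwise disjoint interiors; since they lie in $[0,1]^d$ and each has volume at least $\rho^d$, there are at most $\rho^{-d}=\delta^{-d\eta/\kappa}p^{-d/\kappa}\bigl(\mathcal E_\delta(A\cap V)/\mathcal E_\delta(A)\bigr)^{-d/\kappa}$ of them, which is exactly \eqref{notTooManyCubes}.

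It remains to bound $\mathcal E_\delta\bigl(A\cap V\setminus\bigcup_{Q\in\mathcal Q}Q\bigr)$. Since $\bigcup_{Q}Q=\Omega$ ranging over all Whitney cubes, this leftover set equals $\bigl(A\cap(V\setminus\Omega)\bigr)\cup\bigl(A\cap\bigcup\{Q:\operatorname{side}(Q)<\rho\}\bigr)$. The first piece lies in $A\cap Z(\phi)\subseteq A\cap N_{2\delta}(Z(\phi))$. For the second piece, the defining property $2Q\not\subseteq\Omega$ of a Whitney cube forces any cube $Q$ of side $s$ to lie within $2\sqrt d\,s$ of $\partial\Omega$ (follow a segment from a point of $Q\subseteq\Omega$ to a point of $2Q\setminus\Omega$), hence within $2\sqrt d\,\rho$ of $\partial\Omega\subseteq Z(\phi)$ when $s<\rho$; so the union of the small cubes lies in $N_{2\sqrt d\,\rho}(Z(\phi))$. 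Applying the thin-neighborhood estimate \eqref{CoveringThinNbhdZPhi} with $s=2\sqrt d\,\rho\geq\delta$ (and with $s$ a small multiple of $\delta$ for the first piece) yields $\mathcal E_\delta\bigl(A\cap V\setminus\bigcup_{Q\in\mathcal Q}Q\bigr)\lesssim\delta^{-\eta}\rho^{\kappa}\mathcal E_\delta(A)$; by the choice of $\rho$ this equals $p\,\mathcal E_\delta(A\cap V)$ up to the constants from Corollary~\ref{coveringNbhAnalyticSet} and the dimensional factor $(2\sqrt d)^\kappa$, both of which are absorbed into the $\lesssim$, giving \eqref{SsetminusCubesSmall}.

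The proof is mostly routine once this setup is in place; the one point genuinely requiring care is the bookkeeping with $\rho$. The threshold must be small enough that the crude count $\#\mathcal Q\leq\rho^{-d}$ matches the \emph{constant-free} bound \eqref{notTooManyCubes}, yet large enough that the discarded small cubes and the boundary remnant $V\setminus\Omega$ together contribute at most a constant multiple of $p\,\mathcal E_\delta(A\cap V)$; the particular form of \eqref{restrictionOnP} is exactly what reconciles these two requirements (by making $\rho=\delta$ the borderline case) and lets Corollary~\ref{coveringNbhAnalyticSet} be applied at scale $\rho$. Beyond this, the only thing to verify carefully is the geometric claim that small Whitney cubes sit in a controlled neighborhood of $\partial\Omega$ and that $\partial\Omega\subseteq Z(\phi)$.
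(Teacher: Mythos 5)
Your proof is correct and follows essentially the same route as the paper: Whitney decomposition of $\operatorname{int}(V)$, a threshold scale $\rho=s$ chosen so that $\rho\geq\delta$ is equivalent to \eqref{restrictionOnP}, and Corollary~\ref{coveringNbhAnalyticSet} applied to $N_{O(\rho)}(Z(\phi))$ to control the discarded part. The only (immaterial) difference is that you retain cubes by the criterion $\operatorname{side}(Q)\geq\rho$ rather than ``not contained in $N_s(Z(\phi))$,'' which if anything makes the constant-free count \eqref{notTooManyCubes} cleaner.
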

\begin{proof}
By Corollary \ref{coveringNbhAnalyticSet}, for each $s>\delta$ we have 
\begin{equation}\label{AcapNcor}
\mathcal{E}_{\delta}(A\cap N_s(Z(\phi)))\lesssim  \delta^{-\eta} s^{\kappa}\mathcal{E}_\delta(A).
\end{equation}
%Choosing the constant $C=C(\phi)$ sufficiently large, and 
Using \eqref{restrictionOnP}, we can select $s$ satisfying
\begin{equation}
\label{valueOfTSubCubeProp}
\delta < s=  %C^{\frac{1}{d}}
\delta^{\frac{\eta}{\kappa}}p^{\frac{1}{\kappa}} \Big(\frac{\mathcal{E}_{\delta}(A\cap V)}{\mathcal{E}_{\delta}(A)}\Big)^{\frac{1}{\kappa}}.
\end{equation}
The inequality \eqref{AcapNcor} then gives 
\begin{equation}\label{AcapUtSmall}
\mathcal{E}_{\delta}(A\cap N_s(Z(\phi))) \lesssim p\mathcal{E}_\delta(A \cap V).
\end{equation} %,  and in particular 

%In particular, if the constant $C(\phi)$ is chosen sufficiently large then by \eqref{ACapULarge} and \eqref{restrictionOnP}, we can select 
%\begin{equation}
%\label{valueOfTSubCubeProp}
%\delta<s\leq C^{\frac{1}{d}} \delta^{\frac{\eta}{\kappa}}p^{\frac{1}{\kappa}} \Big(\frac{\mathcal{E}_{\delta}(A\cap U)}{\mathcal{E}_{\delta}(A)}\Big)^{\frac{1}{\kappa}}
%\end{equation}
%so that 
%\begin{equation}\label{AcapUtSmall}
%\mathcal{E}_{\delta}(A\cap N_s(Z(\phi))) < p\mathcal{E}_\delta(A \cap U).
%\end{equation} %,  and in particular 
%%\begin{equation*}
%%\mu(S \backslash U_t) \geq (1-p)\mu(S)\ge \frac12\mu(S).
%%\end{equation*}

Note that if $p$ is sufficiently small, \eqref{AcapUtSmall} implies that $V$ has non-empty interior. Let $V^{\circ}$ be the interior of $V$. If $V^{\circ}$ is empty, there is nothing to prove, and the lemma follows. Otherwise, apply Theorem \ref{whitneyCubeDecomp} to $V^{\circ}$, and let $\mathcal{Q}_0$ be the resulting collection of Whitney cubes. Let $\mathcal{Q}\subset\mathcal{Q}_0$  be the set of cubes that are not contained in $N_s(Z(\phi))$, for the value of $s$ given by \eqref{valueOfTSubCubeProp}.
Since 
\begin{equation*}
V\setminus\bigcup_{Q\in\mathcal{Q}}Q\subset N_s(Z(\phi)),
\end{equation*}
\eqref{AcapUtSmall} implies \eqref{SsetminusCubesSmall}. Since no cube from $\mathcal{Q}$ is contained in $N_s(Z(\phi))$, each cube in $\mathcal{Q}$ has sidelength $\gtrsim s$. Since the cubes are interior-disjoint and contained in $V\subset [0,1]^d$, we have $\#\mathcal{Q}\lesssim s^{-d}$, which implies \eqref{notTooManyCubes}.
\end{proof}

%%%%%
%%%%%

\begin{lem}\label{cubesWhereAnalyticFLarge}
Let $U\subset\RR^d$ be an open set that contains $[0,1]^d$ and let $f_1,\ldots,f_k\colon U\to\RR$ be real analytic functions, none of which are identically zero. Then there is a constant $C$ so that the following holds. 

Let $A_1,\ldots,A_d\subset[0,1]$ be sets and let $A = A_1\times\cdots\times A_d$. Let $\delta>0$ and suppose that for all intervals $J$ of length at least $\delta$, each set $A_i$ satisfies the non-concentration condition
\begin{equation*}
\mathcal{E}_{\delta}(A_i \cap J) \leq \delta^{-\eta}|J|^{\kappa}\mathcal{E}_{\delta}(A_i).
\end{equation*}

Let $0<w<\kappa/2$. Then there is a set of cubes $\mathcal{Q}$ with disjoint interiors that are contained in $[0,1]^d$, with $\#\mathcal{Q} \lesssim \delta^{-2dw/\kappa}$, so that for each cube $Q\in\mathcal{Q}$ and each index $j$, there is a number $v_{Q,j}\geq  \delta^{w}$ so that
\begin{equation}\label{fjRoughlyConstOnQ}
v_{Q,j}\leq f_j(z)< 4v_{Q,j}\quad\textrm{for all}\ z\in Q.
\end{equation}

Furthermore,
\begin{equation}\label{cubeHasLargeVolume}
\mathcal{E}_{\delta}\big(A\ \backslash\  \bigcup_{Q\in\mathcal{Q}}Q\big) \leq C \delta^{\frac{w\kappa}{C}-\eta}\mathcal{E}_{\delta}(A).
\end{equation}
In the above, the implicit constants in the quasi-inequalities may depend on $d,\kappa,s$, and $f_1,\ldots,f_k$; but they are independent of $\delta$.
 
Finally, if the functions $f_1,\ldots,f_k$ are polynomials, then we can take $C$ to depend only on $d$ and $\max(\deg f_1,\ldots,\deg f_k)$. 
\end{lem}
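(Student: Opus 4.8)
The plan is to iteratively ``peel off'' thin neighborhoods of the vanishing loci $Z(f_j)$, using the dyadic level sets of each $f_j$ to slice $[0,1]^d$ into regions where each $f_j$ is essentially constant, and then run a Whitney decomposition to turn these regions into cubes.

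First I would apply \L{}ojasiewicz's inequality (Theorem~\ref{Lojasiewicz}) to each $f_j$ on the compact set $[0,1]^d$, obtaining exponents so that $|f_j(z)| \gtrsim \operatorname{dist}(z, Z(f_j))^{C_j}$. This means that the sublevel set $\{|f_j| < \delta^w\}$ is contained in $N_{\delta^{w/C_j}}(Z(f_j))$, so by Corollary~\ref{coveringNbhAnalyticSet} it captures at most $\lesssim \delta^{-\eta}\delta^{w\kappa/C_j}\mathcal{E}_\delta(A)$ of the mass of $A$. Discarding the union of these sublevel sets over $j=1,\ldots,k$ costs at most $C\delta^{w\kappa/C - \eta}\mathcal{E}_\delta(A)$ for a suitable $C$ depending on $d$ and the $\L$ojasiewicz exponents (hence, for polynomials, only on $d$ and $\max_j \deg f_j$). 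This handles the ``furthermore'' contribution in \eqref{cubeHasLargeVolume}, provided the remaining good region can be cut into the claimed number of cubes.

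On the remaining region each $f_j$ satisfies $\delta^w \le |f_j| \le \max_{[0,1]^d}|f_j| =: M_j$. Partition $[\delta^w, M_j]$ into the $O(\log(1/\delta))$ dyadic blocks $[v, 4v)$ — actually one should use ratio-$4$ blocks as in \eqref{fjRoughlyConstOnQ} — and intersect over all $j$; this yields $O((\log 1/\delta)^k)$ ``cells'' $V$, on each of which every $f_j$ is trapped in a factor-$4$ window with lower value $v_{Q,j}\ge\delta^w$. Now I would be slightly careful about whether $|f_j|$ or $f_j$ is intended; since the statement writes $v_{Q,j}\le f_j(z)$ with no absolute value, one should first restrict to a region where each $f_j$ has a fixed sign (the sign loci are again bounded by the $Z(f_j)$-neighborhoods we already removed, or by a constant-sign decomposition of $[0,1]^d\setminus\bigcup Z(f_j)$), then replace $f_j$ by $-f_j$ where needed. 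The key point is that the boundary of each cell $V$ lies inside $\bigcup_j Z(f_j - (\text{dyadic value}))\cup\bigcup_j Z(f_j)$, which is the zero set of a single analytic function not identically zero. So I can apply Lemma~\ref{semiAnalyticBoundaryCubeDecomp} (or just run a Whitney decomposition directly and discard cubes too close to these boundary loci, as in that lemma's proof) with an appropriate choice of the parameter $p$, to decompose the bulk of $A\cap V$ into interior-disjoint cubes contained in $V$. Choosing $p$ a small power of $\delta$ comparable to $\delta^{w\kappa/C}$ keeps the discarded mass within the budget of \eqref{cubeHasLargeVolume}, and the Whitney cubes all have sidelength $\gtrsim \delta^{w/\kappa'}$ for a suitable exponent, so their number is $\lesssim \delta^{-2dw/\kappa}$ per cell; multiplying by the $O((\log 1/\delta)^k)$ cells (which is absorbed into the implicit constant, or into a slightly worse power of $\delta$) gives the bound $\#\mathcal{Q}\lesssim \delta^{-2dw/\kappa}$.

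The main obstacle I anticipate is bookkeeping the exponents so that all three requirements — $v_{Q,j}\ge\delta^w$, $\#\mathcal{Q}\lesssim\delta^{-2dw/\kappa}$, and the discarded mass $\le C\delta^{w\kappa/C-\eta}\mathcal{E}_\delta(A)$ — hold simultaneously with a single constant $C$ that, in the polynomial case, depends only on $d$ and $\max_j\deg f_j$. In particular one must check that the \L{}ojasiewicz exponent $C_j$, the exponent governing the sidelength of surviving Whitney cubes, and the choice of $p$ in Lemma~\ref{semiAnalyticBoundaryCubeDecomp} can all be controlled uniformly; the condition $w<\kappa/2$ is exactly what is needed so that $p$ can be taken $>\delta$ and so that the sublevel-set loss $\delta^{w\kappa/C}$ beats $\delta^{\eta}$ after a further shrinking of $\eta$. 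A secondary technical point is the sign issue mentioned above, which is harmless but needs to be stated; and one should note that the logarithmic factors $(\log 1/\delta)^k$ are swallowed by replacing $C$ with a slightly larger constant (or by enlarging the exponent $2dw/\kappa$ to any fixed larger multiple and re-optimizing $w$, since $w$ is a free small parameter).
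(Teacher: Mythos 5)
Your overall strategy is the same as the paper's: \L{}ojasiewicz to convert the sublevel set $\{|f_j|<\delta^w\}$ into a neighborhood of $Z(f_j)$, Corollary~\ref{coveringNbhAnalyticSet} to discard it cheaply, dyadic slicing of the range of each $f_j$ to trap the values in factor-$4$ windows, and a Whitney decomposition of what remains. However, there is one genuine gap at the dyadic-slicing step. You take the thresholds to be the \emph{fixed} values $4^\ell\delta^w$, so the boundaries of your cells are the level sets $\{f_j = 4^\ell\delta^w\}$, and you need to discard $A\cap N_{s'}(\{f_j=t\})$ for each such $t$. The only tool available for this is Corollary~\ref{coveringNbhAnalyticSet} applied to $f_j - t$, but — as the paper points out explicitly just before Lemma~\ref{uniformBdPhiT} — the constant there depends on $t$ (it comes from a stratification of $Z(f_j-t)$ and the manifold-dependent constants of Lemma~\ref{neighborhoodAnalyticVariety}). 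Your thresholds depend on $\delta$ and there are $\sim\log(1/\delta)$ of them, so you cannot absorb this $t$-dependence into a single constant, and the same objection applies to your alternative of feeding the product $\prod_{j,\ell}(f_j-4^\ell\delta^w)$ into Lemma~\ref{semiAnalyticBoundaryCubeDecomp}: that product is a $\delta$-dependent function, so the constants of Corollary~\ref{coveringNbhAnalyticSet} for it are not uniform in $\delta$. The non-concentration hypothesis on the $A_i$ simply does not give you, for free, a bound on $\mathcal{E}_\delta(A\cap N_{s'}(\{f_j=t\}))$ that is uniform over all $t$.

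The paper's fix is Lemma~\ref{uniformBdPhiT}: by a Fubini/averaging argument over $t$ (applying Corollary~\ref{coveringNbhAnalyticSet} to the $(d+1)$-variable function $\Phi(z,t)=f_j(z)-t$, whose constant depends only on $f_j$), one finds in each dyadic window $[2^\ell\delta^w,2^{\ell+1}\delta^w]$ a \emph{well-chosen} threshold $t_{j,\ell}$ for which $\mathcal{E}_\delta(A\cap N_{2s'}(\{f_j=t_{j,\ell}\}))\leq C\delta^{-\eta}(s')^{\kappa/2}\mathcal{E}_\delta(A)$ with $C$ independent of $t$ and $\ell$. The price is the weaker exponent $\kappa/2$, which is exactly why the hypothesis $w<\kappa/2$ and the choice $s'=\delta^{2w/\kappa}$ appear. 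Since consecutive chosen thresholds still differ by a factor in $[2,4]$, the factor-$4$ trapping \eqref{fjRoughlyConstOnQ} survives. Your remaining points are fine: the sign issue you flag is real but harmless (the paper implicitly works componentwise where each $f_j$ has fixed sign), and the cube count follows from disjointness plus the minimum sidelength $\gtrsim s'$ without any $(\log 1/\delta)^k$ loss, so no re-optimization of $w$ is needed.
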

\begin{proof}
By Theorem \ref{Lojasiewicz}, there is a constant $C\ge 1$ so that for each index $1\leq j\leq k$ and each $z\in [0,1]^d$, we have
\begin{equation*}
|f_j(z)| \gtrsim {\rm dist}(z, Z(f_j) )^{C}.
\end{equation*}
If $f_1,\ldots,f_k$ are polynomials, then we can take $C$ to depend only on $d$ and $\max(\deg f_1,\ldots,\deg f_k)$.

Since $f_j$ is not identically zero, by %Lemma~\ref{uniformBdPhiT} 
Corollary~\ref{coveringNbhAnalyticSet} we have that for each index $j=1,\ldots,k$ %and each $t\in [0,1]$,
\begin{equation}\label{coveringNeighborhoodLevelSet}
\mathcal{E}_{\delta}\big(A \cap N_s( Z(f_j))\big)\lesssim \delta^{-\eta}s^{\kappa}\mathcal{E}_{\delta}(A).
\end{equation}

Thus we can select a choice of $s$ with
\begin{equation}\label{valueOfT}
s\sim  \delta^{\frac{w}{C}}
\end{equation}
so that for each index $j$ we have $|f_j| > \delta^w$ on $[0,1]^d\backslash N_s( Z(f_j))$, and
\begin{equation}\label{WjtSmallMeausre}
\mathcal{E}_{\delta}(A \cap N_{2s}( Z(f_j) )\lesssim \delta^{\frac{w\kappa}{C}-\eta}\mathcal{E}_{\delta}(A).
\end{equation}

Let $L\sim|\log\delta|$ be chosen sufficiently large so that $\sup_{z\in [0,1]^d}|f_j(z)|\leq 2^L \delta^w$ for each index $j$. Define
\begin{equation}\label{valueOfSPrime}
s'= \delta^{2w/\kappa}.
\end{equation}
For each index $j$ and each integer $1\leq \ell\leq L$, use Lemma \ref{uniformBdPhiT} to select a choice of $t_{j,\ell}\in [2^\ell\delta^w, 2^{\ell+1}\delta^w]$ so that
\begin{equation}\label{NeighborhoodLevelSetstjl}
\mathcal{E}_{\delta}\big(  A\cap N_{2s'}(\{f_j = t_{j,\ell}\}) \big) \leq C_0 \delta^{-\eta} (s')^{\kappa/2}\mathcal{E}_{\delta}(A).
\end{equation}
Note that for each $\ell\geq 0$, $t_0 = 2^\ell\delta^w$ satisfies $(s')^{\kappa/2}<t_0$, so the hypotheses of Lemma \ref{uniformBdPhiT} are satisfied.

For $i=1,2$, define
\begin{equation*}
K_i =  \bigcup_{j=1}^k N_{is}( Z(f_j))\ \cup \ \bigcup_{j=1}^k \bigcup_{\ell=1}^L N_{i s'}(\{f_j = t_{j,\ell}\}). 
\end{equation*}
Then combining \eqref{WjtSmallMeausre}  and \eqref{NeighborhoodLevelSetstjl} %and \eqref{coveringNeighborhoodLevelSet} 
we have 
\begin{equation}\label{WjtSmallMeausre2}
\begin{split}
\mathcal{E}_{\delta}(A \cap K_2) & \leq \sum_{j=1}^k\mathcal{E}_{\delta}\big(A \cap N_{2s}( Z(f_j) )\big)\ +\ \sum_{j=1}^k\sum_{\ell=1}^L  \mathcal{E}_{\delta}\big(A \cap N_{2s'}( Z(\{f_j=t_{j,\ell}\}))\big)\\
&\lesssim   \delta^{\frac{w\kappa}{C}-\eta}\mathcal{E}_{\delta}(A) + |\log\delta|  \delta^{-\eta}(\delta^{2w/\kappa})^{\kappa/2}\mathcal{E}_{\delta}(A)\\
&\lesssim \delta^{\frac{w\kappa}{C}-\eta}\mathcal{E}_{\delta}(A).
\end{split}
\end{equation} 

Let $\mathcal{Q}_0$ be the set of cubes obtained by applying Theorem \ref{whitneyCubeDecomp} to the interior of $[0,1]^d\backslash \overline K_1$, and let $\mathcal{Q}\subset\mathcal{Q}_0$ be the set of all cubes that intersect $[0,1]^d\backslash K_2$; we have that each cube $Q\in\mathcal{Q}$ has side-length $\gtrsim s'$, so in particular $\#\mathcal{Q}\lesssim (s')^{-d}\lesssim \delta^{-2dw/\kappa}$.

By construction, for each index $j$ and each connected component $U\subset [0,1]^d\backslash \overline K_1$ we have
\begin{equation*}
\sup_{z\in U} f_j(z) \leq 4 \inf_{z\in U}f_j(z),
\end{equation*}
and thus for each cube $Q\in\mathcal{Q}$ there is a number $v_{Q,j}$ so that \eqref{fjRoughlyConstOnQ} holds. Since $|f_j|\geq\delta^w$ on $[0,1]^d\backslash \overline K_1$, we can take $v_{Q,j}\geq\delta^w$. 

Finally, \eqref{cubeHasLargeVolume} follows from \eqref{WjtSmallMeausre2} and the observation that
\begin{equation*}
[0,1]^d\ \backslash \bigcup_{Q\in\mathcal{Q}}Q \subset K_2. \qedhere
\end{equation*}
\end{proof}

\section{Shmerkin's nonlinear discretized projection theorem and its consequences}\label{ShmerkinSection}
In this section we will discuss Shmerkin's nonlinear generalization \cite{Shm} of Bourgain's discretized projection theorem \cite{B10}. Shmerkin's nonlinear projection theorem concerns smooth functions of the form $G\colon Q\times I\to\RR$, where $Q\subset[0,1]^2$ is a square and $I\subset[0,1]$ be an interval. We will call these ``projection functions.''

In what follows, it will be helpful to introduce some additional notation. If $G\colon Q\times I\to \RR$ is a projection function, define $G_{(z)}(x,y) = G(x,y,z)$. In particular, 
\begin{equation*}
\nabla G_{(z)}(x,y)=\big( \partial_x  G(x,y,z),\ \partial_y  G(x,y,z)\big),
\end{equation*} 
so $\nabla G_{(z)}(x,y)$ is a vector in $\RR^2$. For such a function $G$, and for $(x,y)\in Q$ and $z\in I$, define the map
\begin{equation}\label{defnTheta}
\theta_{(x,y)}(z) = \angle\operatorname{dir}(\nabla G_{(z)}(x,y)).
\end{equation}

With these definitions, we can now state Shmerkin's result from \cite{Shm}.
\begin{thm}[Shmerkin]\label{Shm}
For every $\eta>0$ and $C>0$, there exists $\tau=\tau(\eta)>0$ such that the following holds for all $\delta>0$ sufficiently small compared to $\eta$ and $C$. Let $Q\subset[0,1]^2$ be a square and let $X\subset Q$ be a union of $\delta$-squares. Let $I\subset[0,1]$ be an interval, and let $Z\subset I$ be a union of $\delta$-intervals.

Let $G\colon Q\times I\to\RR$ be a projection function that satisfies 
\begin{equation}\label{boundsOnG}
\sup_{z\in Z}\Vert G_{(z)}\Vert_{C^2(Q)}<C,\quad\quad\inf_{(x,y,z)\in X\times Z}|\nabla G_{(z)}(x,y)|>C^{-1}.
\end{equation}

Suppose that $X$ satisfies the non-concentration estimate
\begin{equation}\label{Xnonconcentration}
|X \cap Q^\prime|\leq \delta^{\eta}|X|,
\end{equation}
whenever $Q^\prime$ is a square of side-length $|X|^{1/2}$. Suppose furthermore that for each $(x,y)\in X$ and each direction $v\in S^1$, we have the ``transversality'' estimate
\begin{equation}\label{thetaimagenoncon}
|\{z\in Z\colon |\theta_{(x,y)}(z)-v|\leq r\}| \leq \delta^{-\tau}r^{\eta}|Z|\quad\textrm{for all}\ \delta\leq r\leq 1.
\end{equation}

Then there is a small bad set $Z_{\operatorname{bad}}\subset Z$ with $|Z_{\operatorname{bad}}|\leq \delta^{\tau}|Z|$ so that for all $z\in Z\backslash Z_{\operatorname{bad}}$, the following holds: Let $X^\prime\subset X$ with $|X^\prime|\geq \delta^{\tau}|X|$. Then
\begin{equation}
\mathcal{E}_{\delta}(\{G(x,y,z)\colon (x,y)\in X^\prime\})\geq\delta^{-\tau}\mathcal{E}_{\delta}(X)^{1/2}.
\end{equation}
\end{thm}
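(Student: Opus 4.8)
The plan is to deduce Theorem~\ref{Shm} from Bourgain's linear discretized projection theorem \cite{B10} by a localization-and-multiscale argument, with the transversality hypothesis \eqref{thetaimagenoncon} playing the role that a non-concentrated set of directions plays in the linear statement. It is convenient to work with the uniform probability measure $\mu$ on the $\delta$-squares making up $X$, and to track the problem through its behaviour at each dyadic scale between $\delta$ and $1$; on a small square the projection $G_{(z)}$ is, after rescaling, indistinguishable at the relevant resolution from the orthogonal projection in the direction $\theta_{(x,y)}(z)$, so that the nonlinear problem decouples into many linear ones.

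First, fix an intermediate scale $\delta < \rho < 1$ and cover $Q$ by squares of side $\rho$. On such a square $Q_0$ with center $p_0$, the $C^2$ bound in \eqref{boundsOnG} and Taylor's theorem give $|G_{(z)}(w) - G_{(z)}(p_0) - \nabla G_{(z)}(p_0)\cdot(w - p_0)| = O(\rho^2)$ for $w \in Q_0$, which is below the resolution $\rho$; hence the image under $G_{(z)}$ of $\mu$ restricted and rescaled to $Q_0$ agrees, up to an error $O(\rho)$, with the orthogonal projection of that rescaled measure onto the line in direction $\theta_{(x,y)}(z)$. Applying Bourgain's linear projection theorem at this single scale --- the lower bound on $|\nabla G_{(z)}|$ in \eqref{boundsOnG} keeps the linearised map non-degenerate, the non-concentration hypothesis \eqref{Xnonconcentration} (together with a routine pigeonholing so that the rescaled measure is still suitably non-concentrated) supplies the Frostman condition, and \eqref{thetaimagenoncon} supplies the spread of directions as $z$ ranges over $Z$ --- yields, for every $z$ outside a set of measure $\le \delta^{\tau'}|Z|$, a genuine entropy increment for the image of $\mu$ at scale $\rho$ relative to the square-root heuristic.

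Next, chain these increments over a dyadic sequence $\delta = \rho_0 < \rho_1 < \cdots < \rho_M \sim 1$ with $M = O(|\log\delta|)$. Here one must certify that the scale-by-scale gains genuinely accumulate rather than cancel; this is exactly the point at which Shmerkin's inverse theorem for the $L^q$-norms of convolutions (equivalently, a Pl\"unnecke--Ruzsa-type submultiplicativity for covering numbers of images) does the work, and it delivers the global estimate $\mathcal{E}_{\delta}(\{G(x,y,z) \colon (x,y)\in X\}) \ge \delta^{-\tau}\mathcal{E}_{\delta}(X)^{1/2}$ for all $z$ outside the union $Z_{\mathrm{bad}}$ of the $O(|\log\delta|)$ exceptional sets arising at the individual scales; choosing $\tau$ a small enough multiple of the single-scale gain keeps $|Z_{\mathrm{bad}}| \le \delta^{\tau}|Z|$. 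Passing from $X$ to a subset $X' \subset X$ with $|X'| \ge \delta^{\tau}|X|$ costs nothing: $X'$ still satisfies \eqref{Xnonconcentration} with $\eta$ replaced by $\eta - \tau$, so after shrinking $\tau$ the entire argument applies to $X'$ in place of $X$.

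The main obstacle is precisely the nonlinearity: because $\theta_{(x,y)}(z)$ depends on the base point $(x,y)$, there is no global change of coordinates straightening the pencil $\{G_{(z)}\}$ into linear projections, so the linearisation is available only locally and only one scale at a time. One must therefore verify carefully that the $O(\rho^2)$ Taylor error remains below resolution at \emph{every} scale simultaneously, and --- more seriously --- that the transversality hypothesis \eqref{thetaimagenoncon}, which is a single bound involving the $\delta$-scale directions, can be localised to furnish the non-concentration of directions needed at each intermediate scale $\rho$ and on each $\rho$-square, with constants that do not deteriorate as $M \to \infty$. Keeping the final loss $\tau$ dependent only on $\eta$ throughout this bookkeeping is the technical heart of the proof.
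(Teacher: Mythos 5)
This is Theorem \ref{Shm}, which the paper does not prove: it is imported verbatim from Shmerkin \cite{Shm} and used as a black box, so there is no in-paper proof to compare against. Judged on its own merits, your sketch does capture the broad strategy of Shmerkin's actual argument (linearize $G_{(z)}$ on small squares via the $C^2$ bound, apply a linear discretized projection theorem to the linearizations, and combine gains across dyadic scales), but as written it has genuine gaps rather than being a proof.

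The first gap is the claim that ``applying Bourgain's linear projection theorem at this single scale'' yields an entropy increment at scale $\rho$. Bourgain's theorem in \cite{B10} is inherently a statement about the full range of scales from $\delta$ to $1$; it does not directly furnish a single-scale increment, and converting it into one --- and then certifying that the increments accumulate --- is precisely the content of the inverse-theorem machinery you invoke in one sentence without using. Moreover, the mechanism you name for the accumulation (``Pl\"unnecke--Ruzsa-type submultiplicativity for covering numbers'') is not the right one; what is actually needed is superadditivity of entropy across scales together with quantitative structure on the exceptional sets of directions, which vary from square to square and from scale to scale, so that one can bound the measure of $z$ for which $\theta_{(x,y)}(z)$ lands in an exceptional set at a positive proportion of scales. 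The second gap is the final reduction: the theorem asserts a \emph{single} bad set $Z_{\operatorname{bad}}$ that works simultaneously for \emph{every} subset $X'\subset X$ with $|X'|\geq\delta^{\tau}|X|$. Your proposal to ``apply the entire argument to $X'$ in place of $X$'' produces a bad set depending on $X'$, and since there are far too many such subsets for a union bound, this does not yield the stated conclusion. The robustness under passage to large subsets has to be built into the projection theorem one applies at each scale, not added at the end. Both points are exactly where the technical work of \cite{Shm} lives, and the proposal defers rather than resolves them.
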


Our task for the remainder of this section is to reformulate Theorem \ref{Shm} as a statement about energy dispersion.

\subsection{Weakening the hypotheses of Theorem~\ref{Shm}}
Our first task is to state a version of Theorem~\ref{Shm} where the bounds \eqref{boundsOnG} are slightly less restrictive. We will also recast several of the hypotheses and conclusions to align more closely with the setup of Theorem \ref{main-energy-dispersion}. 
\begin{lem}\label{Shmcor}
For every $0<\alpha<1$ and $0<\kappa\leq\alpha$, there exists $\sigma=\sigma(\alpha,\kappa)>0$ such that the following holds. 

 Let $I\subset[0,1]$ be an interval, let $Q\subset[0,1]^2$ be a square, let $\delta>0$, and let $G\colon Q\times I \to\RR$ be a projection function that satisfies the non-degeneracy conditions
\begin{align}
\sup_{z\in I}\Vert G_{(z)}\Vert_{C^2(Q)}&\leq\delta^{-\sigma},\label{boundOnC2NormG}\\
\sup_{(x,y,z)\in Q\times I}|\partial_z G(x,y,z)|&\leq \delta^{-\sigma},\label{boundOnGz}\\
\inf_{(x,y,z)\in Q\times I}|\nabla G_{(z)}(x,y)|& \geq \delta^\sigma,\label{boundOnNablaG} \\
\inf_{(x,y,z)\in Q\times I} |\partial_z \theta_{(x,y)}(z)|&\geq \delta^{\sigma},  \label{lowerBoundThetap}
\end{align}
and suppose that $\nabla G_{(z)}(x,y)$ is roughly constant on $Q\times I$, i.e.
\begin{equation}\label{nablaGRoughlyConst}
\sup_{(x,y,z)\in Q\times I} |\nabla G_{(z)}(x,y)| \leq 4 \inf_{(x,y,z)\in Q\times I}|\nabla G_{(z)}(x,y)|.
\end{equation}

Let $A_1,A_2,A_3$ be unions intervals of length $\delta$ with $A_1\times A_2\subset Q$ and $A_3\subset I$. Suppose that for each index $i=1,2,3$  we have $|A_i|\le \delta^{1-\alpha}$, and that for each interval $J$ we have the non-concentration condition
\begin{equation}\label{nonconcorA1234}
|A_i\cap J|\le \delta^{-\sigma}|J|^\kappa|A_i|.
\end{equation}
Let $R\subset\RR$ be a set with $\mathcal{E}_{\delta}(R)\leq \delta^{-\alpha-\sigma}$.
Then 
\begin{equation}\label{volOfTriples}
\mathcal{E}_{\delta}\big( \{(a_1,a_2,a_3)\in A_1\times A_2\times A_3\colon G(a_1,a_2,a_3)\in R\} \big) \lesssim \delta^{-3\alpha+\sigma}.
\end{equation}
\end{lem}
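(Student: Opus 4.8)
The plan is to derive Lemma~\ref{Shmcor} from Theorem~\ref{Shm} by a rescaling argument that absorbs the loss of $\delta^{-\sigma}$ factors into the $\delta^\eta$-type gains, together with a counting reduction that converts the statement ``$G(a_1,a_2,a_3)\in R$'' into a lower bound on $\mathcal E_\delta$ of an image.

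\textbf{Step 1: Rescaling to get genuine $C^2$ bounds.} First I would introduce new variables so that the quantitative non-degeneracy hypotheses \eqref{boundOnC2NormG}--\eqref{nablaGRoughlyConst} become honest $O(1)$ bounds at the price of changing the scale $\delta$ to $\tilde\delta=\delta^{1-O(\sigma)}$. Concretely, since $\|G_{(z)}\|_{C^2}\le\delta^{-\sigma}$, $|\partial_z G|\le\delta^{-\sigma}$ and $|\partial_z\theta_{(x,y)}|\ge\delta^\sigma$, after composing with affine maps in the $x,y,z$ and output variables on scale $\delta^{c\sigma}$ (for a suitable absolute constant $c$) we obtain a new projection function $\tilde G$ on unit-sized domains with $\|\tilde G_{(z)}\|_{C^2}\lesssim 1$, $|\nabla\tilde G_{(z)}|\sim 1$, and $|\partial_z\tilde\theta|\gtrsim 1$; the last condition is exactly what lets one verify the transversality hypothesis \eqref{thetaimagenoncon} of Theorem~\ref{Shm}, since a uniform lower bound on $\partial_z\theta_{(x,y)}$ means the map $z\mapsto\theta_{(x,y)}(z)$ is a bi-Lipschitz change of variable, so the non-concentration of $A_3$ transfers to non-concentration of its $\theta$-image. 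All of these rescalings change $\delta\mapsto\delta^{1-O(\sigma)}$ in covering-number statements, which is harmless provided $\sigma$ is chosen small relative to the $\eta$ we feed into Theorem~\ref{Shm}.

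\textbf{Step 2: The counting reduction.} Assume for contradiction that \eqref{volOfTriples} fails, so the set $T=\{(a_1,a_2,a_3): G(a_1,a_2,a_3)\in R\}$ has $\mathcal E_\delta(T)\ge\delta^{-3\alpha+\sigma}$. Think of $T$ as a subset of $X\times Z$ where $X=A_1\times A_2$ and $Z=A_3$, and for $z\in Z$ let $X_z=\{(a_1,a_2):(a_1,a_2,z)\in T\}$. By pigeonholing over the $\lesssim\delta^{-\alpha}$ many $\delta$-intervals making up $Z$, a positive $\delta^{\sigma'}$-fraction of $z\in Z$ have $\mathcal E_\delta(X_z)\gtrsim\delta^{-2\alpha+\sigma}\mathcal E_\delta(X)/\mathcal E_\delta(X)\cdot(\cdots)$ — more precisely, a set $Z'\subset Z$ with $|Z'|\ge\delta^{\sigma''}|Z|$ such that for $z\in Z'$ we have $\mathcal E_\delta(X_z)\ge\delta^{\sigma'''}\mathcal E_\delta(X)$ (here $\mathcal E_\delta(X)\sim\delta^{-2\alpha}$ by the bound $|A_i|\le\delta^{1-\alpha}$ and the non-concentration lower bound it forces). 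This is the ``$X'\subset X$ with $|X'|\ge\delta^\tau|X|$'' hypothesis in the conclusion of Theorem~\ref{Shm}. On the other hand, for each such $z$, $G_{(z)}(X_z)\subset N_\delta(R)$, so $\mathcal E_\delta(\{G(x,y,z):(x,y)\in X_z\})\le\mathcal E_\delta(N_\delta(R))\lesssim\mathcal E_\delta(R)\le\delta^{-\alpha-\sigma}$. But Theorem~\ref{Shm} (applied with $\eta$ chosen so that $\tau(\eta)$ beats all the $\sigma$-powers accumulated above, and after discarding the bad set $Z_{\mathrm{bad}}$, which is possible since $|Z'|>|Z_{\mathrm{bad}}|$) gives $\mathcal E_\delta(\{G(x,y,z):(x,y)\in X_z\})\ge\delta^{-\tau}\mathcal E_\delta(X)^{1/2}\sim\delta^{-\tau-\alpha}$, a contradiction once $\tau>2\sigma$ say. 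One also needs to check the non-concentration hypothesis \eqref{Xnonconcentration} on $X=A_1\times A_2$ at scale $|X|^{1/2}\sim\delta^{-\alpha}\cdot\delta$, i.e.\ for squares of side $\sim\delta^{1-\alpha}$; this follows from \eqref{nonconcorA1234} applied to $A_1$ and $A_2$ separately, taking $J$ of length $\delta^{1-\alpha}$ and using $\kappa>0$, again at the cost of a $\delta^{-O(\sigma)}$ factor absorbed into $\delta^\eta$.

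\textbf{Main obstacle.} The delicate point is the bookkeeping of exponents: one must choose $\sigma=\sigma(\alpha,\kappa)$ first, but $\tau=\tau(\eta)$ from Theorem~\ref{Shm} depends on $\eta$, and $\eta$ in turn must dominate all the $O(\sigma)$ losses coming from rescaling, from pigeonholing $Z$ into $\delta^{-\alpha}$ pieces, from the Cartesian-product non-concentration check, and from the $\mathcal E_\delta(R)\le\delta^{-\alpha-\sigma}$ slack. So the logical order is: pick $\alpha,\kappa$; this determines an admissible $\eta=\eta(\alpha,\kappa)$ (roughly, $\eta$ a small multiple of $\min(\kappa,\alpha,1-\alpha)$) and hence $\tau=\tau(\eta)$; then choose $\sigma$ small enough (a small multiple of $\tau$) that every accumulated loss is $\le\tau/10$, so the final inequality $\delta^{-\tau-\alpha}\le\delta^{-\alpha-O(\sigma)}$ is a genuine contradiction. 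Verifying that the rescaling in Step~1 really does produce a function to which Theorem~\ref{Shm} applies — in particular that \eqref{lowerBoundThetap} is the correct hypothesis to yield \eqref{thetaimagenoncon} after the change of variables, and that the $C^2$ control survives the affine rescaling of the output variable — is the part that requires genuine care rather than routine estimation.
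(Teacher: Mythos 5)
Your proposal follows essentially the same route as the paper's proof: restrict to a sub-square and sub-interval of scale $\delta^{O(\sigma)}$ capturing most of the triples, rescale to obtain a projection function with genuine $O(1)$ bounds, verify the non-concentration and transversality hypotheses of Theorem~\ref{Shm} (with the transversality coming from \eqref{lowerBoundThetap} exactly as you say), and derive a contradiction by comparing the lower bound $\delta^{-\tau}\mathcal{E}_{\delta}(X)^{1/2}$ from Theorem~\ref{Shm} with the upper bound $\mathcal{E}_{\delta}(R)\leq\delta^{-\alpha-\sigma}$, with the parameter ordering ($\eta$ from $\alpha,\kappa$, then $\tau(\eta)$, then $\sigma$ small) matching the paper's. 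One small correction: the non-concentration condition \eqref{nonconcorA1234} does not force a lower bound $\mathcal{E}_{\delta}(X)\gtrsim\delta^{-2\alpha}$ as you claim; instead one splits into cases, noting that if some $|A_i|\leq\delta^{1-\alpha+5\sigma}$ then \eqref{volOfTriples} follows from the trivial bound $\mathcal{E}_{\delta}(T)\leq\mathcal{E}_{\delta}(A_1)\mathcal{E}_{\delta}(A_2)\mathcal{E}_{\delta}(A_3)$, and otherwise $\mathcal{E}_{\delta}(X)^{1/2}\geq\delta^{-\alpha+5\sigma}$ still suffices for the contradiction.
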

\begin{proof}
Our proof consists of three main steps. While there are a few technical details, each of these steps follow standard arguments. First, note that our projection function $G$ satisfies the conditions \eqref{boundOnC2NormG} and \eqref{boundOnNablaG}. These are different from the requirement \eqref{boundsOnG} that is needed when applying Theorem \ref{Shm}.  We will apply a re-scaling argument to create a new function $\tilde G$ that satisfies \eqref{boundsOnG}. 

Second, our projection function $G$ satisfies the condition \eqref{lowerBoundThetap}, and our sets $A_1,A_2,A_3$ satisfy the non-concentration condition \eqref{nonconcorA1234}. These are different from the corresponding requirements \eqref{thetaimagenoncon} and \eqref{Xnonconcentration}. We will verify that our new function $\tilde G$ and appropriately constructed sets $X$ and $Z$ satisfy \eqref{Xnonconcentration} and \eqref{thetaimagenoncon}.

Third, we will show that our desired bound \eqref{volOfTriples} follows by applying Theorem \ref{Shm} to our newly constructed function $\tilde G$. In brief, if the bound \eqref{volOfTriples} failed then there must exist a large subset $A_3^\prime\subset A_3$ so that $G(A_1,A_2,z)$ has small $\delta$-covering number for each $z\in A_3^\prime$. We will show that this contradicts Theorem \ref{Shm}. 

\medskip

\noindent \textbf {Constructing $\tilde G$: A re-scaling argument}.
First we will find a slightly smaller sub-square of $Q$ and sub-interval of $I$ that still captures most of the triples from \eqref{volOfTriples}. Let $Q^\prime\subset Q$ be a square of side-length at most $\delta^{2\sigma}$ and $I'\subset I$ an interval of length at most $\delta^{4\sigma}$ that maximizes the quantity
\begin{equation*}
\mathcal{E}_{\delta}\big( \{(a_1,a_2,a_3)\in (Q^\prime\cap (A_1\times A_2))\times (I^\prime\cap A_3)\colon G(a_1,a_2,a_3)\in R\} \big).
\end{equation*}
Define $A_1^\prime\times A_2^\prime =Q^\prime\cap  (A_1\times A_2)$ and $A_3^\prime=I'\cap A_3$.
Since $Q\times I\subset [0,1]^3$, by pigeonholing, we have
\begin{equation}\label{mostEntropyCapturedSubSquare}
\begin{split}
\mathcal{E}_{\delta}&\big( \{(a_1,a_2,a_3)\in A_1\times A_2\times A_3\colon G(a_1,a_2,a_3)\in R\} \big) \\
&\leq \delta^{-8\sigma} \mathcal{E}_{\delta}\big( \{(a_1,a_2,a_3)\in A_1^\prime \times A_2^\prime\times A_3^\prime\colon G(a_1,a_2,a_3)\in R\} \big). 
\end{split}
\end{equation}

Let
\begin{equation*}
m = \inf_{(x,y,z)\in Q^\prime \times I^\prime}|\nabla G_{(z)}(x,y)|.
\end{equation*}
By \eqref{boundOnNablaG} we have $m\geq\delta^{\sigma}$.

Let $(x_0,y_0)$ be the bottom-left corner of $Q^\prime$, and let $\tilde Q  = \delta^{-2\sigma}( Q^\prime  -(x_0,y_0))$; thus $\tilde Q$ is a sub-square contained in $[0,1]^2$ (possibly $\tilde Q = [0,1]^2$) with bottom left endpoint $(0,0)$. Let $z_0$ be the leftmost point in $I^\prime$ and define $\tilde I = I^\prime - z_0$. 
For $(x,y)\in \tilde Q$ and $z\in \tilde I$, define 
\begin{equation*}
\tilde G(x,y,z) = m^{-1}\delta^{-2\sigma}( G(\delta^{2\sigma} x+x_0, \delta^{2\sigma} y+y_0, z+z_0)-G(x_0,y_0,z_0)).
\end{equation*}
We have $\tilde G\colon \tilde Q\times \tilde I\to\RR$, with $\tilde G(0,0,0)=0$. Indeed, if $(x,y)\in\tilde Q$ and $z\in \tilde I$, then $(\delta^{2\sigma} x+x_0, \delta^{2\sigma} y+y_0)\in Q^\prime$, $z+z_0\in I^\prime$, and thus $G(\delta^{2\sigma} x+x_0, \delta^{2\sigma} y+y_0, z+z_0)-G(x_0,y_0,z_0)$ is well-defined. 

We have 
\begin{equation}\label{rescaledGTilde}
\begin{split}
\partial_x \tilde G(x,y,z) & = m^{-1} G_1(\delta^{2\sigma} x + x_0, \delta^{2\sigma} y + y_0, z+z_0),\\
\partial_y \tilde G(x,y,z) & = m^{-1} G_2(\delta^{2\sigma} x + x_0, \delta^{2\sigma} y + y_0, z+z_0),
\end{split}
\end{equation}
where for clarity we write $G_1(\cdot,\cdot,\cdot)$ (resp.~$G_2(\cdot,\cdot,\cdot)$ ) to denote the partial derivative of $G$ with respect to its first (resp.~second) variable; this notation will not be used anywhere else in the paper. \eqref{rescaledGTilde} implies that 
\begin{equation}\label{infGradientTildeG}
\inf_{(x,y,z)\in \tilde Q \times \tilde I}|\nabla \tilde G_{(z)}(x,y)|\geq 1.
\end{equation}
On the other hand, by \eqref{nablaGRoughlyConst} we have
\begin{equation*}
\sup_{(x,y,z)\in \tilde Q \times \tilde I}|\nabla \tilde G_{(z)}(x,y)|\leq 4.
\end{equation*}
This implies 
$
|\tilde G(x, y, z)-\tilde G(0,0,z)| \le  4 \sqrt2
$ for all $(x, y, z) \in \tilde Q\times \tilde I$.
By \eqref{boundOnGz}, we have
$|\tilde G(0,0,z)|
\le \delta^{-4\sigma}|z|\le 1,$
for all $z\in \tilde I$.
Thus
\begin{equation}
|\tilde G(x, y, z)| \le  4 \sqrt2+1\quad\textrm{for all }\ (x, y, z) \in \tilde Q\times \tilde I.
\end{equation}

Finally, we can compute
\begin{equation*}
|\partial_{xx}\tilde G(x,y,z)| = |m^{-1}\delta^{2\sigma} (\partial_{xx} G)(\delta^{2\sigma} x + x_0, \delta^{2\sigma} y + y_0, z+z_0)|\leq 1,
\end{equation*}
where the final inequality used the bound $m\geq\delta^\sigma$ and \eqref{boundOnC2NormG}. A similar computation shows that the other second derivatives of $\tilde G$ with respect to $x$ and $y$ are bounded by 1, and thus
\begin{equation}\label{C2NormTildeG}
\sup_{z\in \tilde I }\Vert \tilde G_{(z)}\Vert_{C^2(\tilde Q)}\leq 100.
\end{equation}
Finally, if $(x,y)\in\tilde Q$ and $z\in \tilde I$, then
\begin{equation*}
\tilde\theta_{(x,y)}(z)=\theta_{(\delta^{2\sigma} x + x_0, \delta^{2\sigma} y + y_0)}(z+z_0),
\end{equation*}
so for each $(x,y,z)\in\tilde Q\times \tilde I$, \eqref{lowerBoundThetap} implies
\begin{equation}\label{sizeThetaTildeG}
|\partial_z \tilde \theta_{(x,y)}(z)|\geq \delta^{\sigma}.
\end{equation}

Let $\tilde A_1 = \delta^{-2\sigma}(A_1^\prime-x_0)$, $\tilde A_2 = \delta^{-2\sigma}(A_2^\prime-y_0)$, and $\tilde A_3 = A_3^\prime-z_0$. Observe that the sets $\tilde A_1,\tilde A_2,\tilde A_3$ are still unions of intervals of length $\delta$, so in particular $\mathcal{E}_{\delta}(\tilde A_i)\sim\delta^{-1}|\tilde A_i|$. We also have that 
\begin{equation}\label{volBoundAi}
|\tilde A_i|\leq \delta^{1-\alpha-2\sigma}.
\end{equation}
The sets $\tilde A_1$ and $\tilde A_2$ satisfy a slightly weaker version of \eqref{nonconcorA1234}---if $J\subset[0,1]$ is an interval , then
\begin{equation}\label{nonConcentrationTildeAi}
\begin{split}
|\tilde A_i\cap J| &= \delta^{-2\sigma}|A_i \cap \delta^{2\sigma}(J+x_0)|\\
&\leq \delta^{-2\sigma} (\delta^{-\sigma} (\delta^{2\sigma}|J|)^{\kappa}|A_i|)\\
&\leq \delta^{-(2+\kappa)\sigma}|J|^{\kappa}\delta^{1-\alpha}.
\end{split}
\end{equation}
The set $\tilde A_3$ is just a translate of $A_3^\prime$, so, in view of \eqref{nonconcorA1234}, we have
\begin{equation*}
|\tilde A_3\cap J| \le \delta^{-\sigma}|J|^\kappa\delta^{1-\alpha}.
\end{equation*}

Finally, let $\tilde R = m^{-1}\delta^{-2\sigma}(R - G(x_0,y_0,z_0))$ and note that $\mathcal{E}_{\delta}(\tilde R)\le \delta^{-\alpha-4\sigma}$.  Then 
\begin{equation}\label{entropyCapturedTildeAi}
\begin{split}
\mathcal{E}_{\delta}&\big( \{(a_1,a_2,a_3)\in A_1^\prime \times A_2^\prime\times A_3^\prime\colon G(a_1,a_2,a_3)\in R\} \big)\\
%&\leq \mathcal{E}_{\delta}\big( \{(a_1,a_2,a_3)\in \tilde A_1 \times \tilde A_2 \times \tilde A_3 \colon \tilde G(a_1,a_2,a_3)\in \tilde R\} \big).
&\leq\delta^{4\sigma} \mathcal{E}_{\delta}\big( \{(a_1,a_2,a_3)\in \tilde A_1 \times \tilde A_2 \times \tilde A_3 \colon \tilde G(a_1,a_2,a_3)\in \tilde R\} \big).
\end{split}
\end{equation}
Combining \eqref{mostEntropyCapturedSubSquare} and \eqref{entropyCapturedTildeAi}, we see that in order to establish \eqref{volOfTriples}, it suffices to prove the bound
\begin{equation}\label{modifiedVolOfTriples}
\mathcal{E}_{\delta}\big( \{(a_1,a_2,a_3)\in \tilde A_1 \times \tilde A_2 \times \tilde A_3 \colon \tilde G(a_1,a_2,a_3)\in \tilde R\} \big)\lesssim \delta^{-3\alpha+5\sigma}.
\end{equation}
We will establish the bound \eqref{modifiedVolOfTriples} by applying Theorem \ref{Shm} to $\tilde G$. To do so, we must verify that $\tilde G$, $\tilde A_1$, $\tilde A_2$, and $\tilde A_3$ satisfy the hypotheses of Theorem \ref{Shm}.

\medskip
\noindent \textbf{Verifying the conditions for Theorem \ref{Shm}}
Let $\eta = (1-\alpha)\kappa$, and let $\tau>0$ be the value obtained by applying Theorem \ref{Shm} with this choice of $\eta$. Let 
\begin{equation}\label{sigmasmall}
\sigma < \min\big( (1-\alpha)\kappa/20,\ \tau/9\big).
\end{equation}
Define
\begin{equation*}
T:=\{(a_1,a_2,a_3)\in \tilde A_1\times \tilde A_2\times \tilde A_3\colon \tilde G(a_1,a_2,a_3)\in \tilde R\}.
\end{equation*}
Since $\tilde A_1,\tilde A_2$ an $\tilde A_3$ are unions of $\delta$ intervals, to obtain \eqref{modifiedVolOfTriples} it suffices to prove that
\begin{equation}\label{desiredBoundOnT}
\vol_3(T)\leq \delta^{3-3\alpha+5\sigma}.
\end{equation}

First, we can suppose that for each index $i$, 
\begin{equation}\label{AiNotTooSmall}
\vol_1(\tilde A_i)\geq \delta^{1-\alpha+5\sigma}.
\end{equation} 
Indeed, if \eqref{AiNotTooSmall} fails for some index $i$ then \eqref{desiredBoundOnT} follows from the trivial bound 
\begin{equation*}
\begin{split}
\vol_3(T) &\leq \vol_3(\tilde A_1 \times \tilde A_2 \times \tilde A_3)\\
&\leq \delta^{2-2\alpha-4\sigma}\min_{1\leq i\leq 3}\vol_1(A_i),
\end{split}
\end{equation*}
where on the last line we used \eqref{volBoundAi}.

Let $X=\tilde A_1\times \tilde A_2$ and let $Z = \tilde A_3$. We will verify that $\tilde G$ and the sets $X$ and $Z$ satisfy the hypotheses of Theorem \ref{Shm} with parameter $\eta$. To begin, \eqref{boundsOnG} follows from \eqref{infGradientTildeG} and \eqref{C2NormTildeG}. 

Next we will verify that $X$ satisfies the non-concentration estimate \eqref{Xnonconcentration}. Let $r = \vol_2(X)^{1/2}\leq \delta^{1-\alpha-2\sigma}$. Let $B(x,r)$ be a ball of radius $r$, and let $J_1\times J_2$ be a square of side-length $2r$ that contains $B(x,r)$. Then by \eqref{nonConcentrationTildeAi}, we have
\begin{equation*}
\begin{split}
\vol_2(X\cap B(x,r) )
&\le \vol_1(\tilde A_1\cap J_1)\vol_1(\tilde A_2\cap J_2)\\
&\leq \big( (\delta^{-3\sigma}(2\delta^{1-\alpha-2\sigma})^{\kappa}\delta^{1-\alpha}\big)^2\\
&\leq 4^{\kappa}\frac{\delta^{2-2\alpha-6\sigma-4\kappa \sigma}}{|\tilde A_1| |\tilde A_2|}\delta^{2\eta}|\tilde A_1| |\tilde A_2|\\
&\leq \delta^{\eta} \vol_2(X),
\end{split}
\end{equation*}
where for the last line we used \eqref{sigmasmall} and \eqref{AiNotTooSmall}.

Finally, we will verify the transversality estimate \eqref{thetaimagenoncon}. Note that for each $(x,y)\in \tilde Q$, the function $\tilde\theta_{(x,y)}(z)$ is monotone and continuous on $\tilde I$. Thus for each interval $K$, the pre-image
\begin{equation*}J:=\theta_{(x,y)}^{-1}(K)\end{equation*} 
is  necessarily an interval.
Moreover, \eqref{sizeThetaTildeG} implies that for every $z_1,z_2\in J$, we have
\begin{equation*}
\frac{|\tilde \theta_{(x,y)}(z_1)-\tilde \theta_{(x,y)}(z_2)|}{|z_1-z_2|}\ge \delta^{\sigma}.
\end{equation*}
Thus $|J|\le \delta^{-\sigma}| K|$. 
It follows that for every interval $K\subset \RR$ of length at most $2r$, 
%$J\subset\tilde I$ with $\delta\le |J|\le 1$, 
we have
\begin{align*}
|\{z\in \tilde A_3\colon \tilde \theta_{(x,y)}(z)\in K\} |
&=|\tilde A_3\cap J| \\
&\leq \delta^{-\sigma}|J|^{\kappa}\delta^{1-\alpha}\\
&\leq \delta^{-\sigma-\kappa \sigma}|K|^{\kappa}\frac{\delta^{1-\alpha}}{|\tilde A_3|}|\tilde A_3|\\ 
&\le \delta^{-(6+\kappa)\sigma}|\tilde A_3|r^{\eta}r^{\alpha\kappa}2^{\kappa}\\
&\le \delta^{-\tau}|\tilde A_3|r^{\eta},
\end{align*}
where we used \eqref{sigmasmall} and \eqref{AiNotTooSmall}.
Thus \eqref{thetaimagenoncon} holds.

\medskip

\noindent \textbf{Applying Theorem \ref{Shm}}
Apply Theorem \ref{Shm} to $\tilde G$, $X=\tilde A_1\times \tilde A_2$, and $Z=\tilde A_3$. We obtain a set $(\tilde A_3)_{\operatorname{bad}}\subset  \tilde A_3$ with
\begin{equation*}
\vol_1((\tilde A_3)_{\operatorname{bad}})\leq \delta^{\tau}\vol_1(\tilde A_3).
\end{equation*}

For each $c\in \tilde A_3$, define
\begin{equation*}
X_c:=\{(x,y)\in \tilde A_1\times \tilde A_2\colon \tilde G(x,y,c)\in \tilde R\}.
\end{equation*}
Let
\begin{equation*}
\tilde A_3':=\left\{z\in \tilde A_3\colon \vol_2(X_z)\ge \tfrac{\vol_3(T)}{2\vol_1(\tilde A_3)}\right\}.
\end{equation*}

Now, suppose \eqref{desiredBoundOnT} fails; we will obtain a contradiction. By Fubini, we have 
\begin{equation*}\vol_3(T)\le \vol_1(\tilde A_3')\vol_1(\tilde A_1)\vol_1(\tilde A_2)+\vol_1(\tilde A_3\setminus \tilde A_3')\cdot \tfrac{\vol_3(T)}{2\vol_1(\tilde A_3)},
\end{equation*}
which implies
\begin{align*}
|\tilde A_3'|
&\ge \frac{\vol_3(T)}{2|\tilde A_1||\tilde A_2|}\\
&=\frac{\vol_3(T)}{2|\tilde A_1||\tilde A_2||\tilde A_3|}|\tilde A_3|\\
&\ge \tfrac12 \delta^{9\sigma}|\tilde A_3|,
\end{align*}
where on the final line we used \eqref{volBoundAi} and the observation $|\tilde A_3|\le |A_3|\le \delta^{1-\alpha}$. 

In particular, since $\sigma>0$ satisfies \eqref{sigmasmall} (and assuming $\delta>0$ is sufficiently small), we have $|\tilde A_3'|> \delta^{\tau}|\tilde A_3|$ and thus
\begin{equation}
\tilde A_3'\not\subseteq (\tilde A_3)_{\rm bad}.
\end{equation}

Fix an element $z\in \tilde A_3'\backslash (\tilde A_3)_{\rm bad}$ and define $X^\prime = X_z$. 
By the definition of $\tilde A_3^\prime$, we have 
\begin{equation}
\vol_2(X_z)\ge
\tfrac{\vol_3(T)}{2|\tilde A_3|}\ge  \tfrac12\delta^{2-2\alpha+5\sigma}\geq \tfrac12 \delta^{9\sigma}\vol_2(X)\geq \delta^{\tau}\vol_2(X),
\end{equation}
and thus by Theorem \ref{Shm} we have
\begin{equation}
\mathcal{E}_{\delta}\big(\{\tilde G(x,y,z)\colon (x,y)\in X_z\}\big) \geq \delta^{-\tau}\mathcal{E}_{\delta}(X)^{1/2}.
\end{equation}
On the other hand, 
\begin{equation}
\mathcal{E}_{\delta}\big(\{\tilde G(x,y,z)\colon(x,y)\in X_z\}\big)
\le \mathcal{E}_{\delta}(\tilde R)
\le  \delta^{-\alpha-4\sigma}
\leq \delta^{-9\sigma}\mathcal{E}_{\delta}(X)^{1/2}\le \delta^{-\tau}\mathcal{E}_{\delta}(X)^{1/2}.
\end{equation}
This is a contradiction. We conclude that \eqref{desiredBoundOnT} holds, which completes the proof.
\end{proof}

\subsection{Reformulating Theorem \ref{Shm} as an energy dispersion estimate}\label{reformulationEnergyDispersionSec}
As the title suggests, in this section we will reformulate Theorem \ref{Shm} as an energy dispersion estimate. The basic idea is that we can count the number of solutions to $P(x,y) = P(x',y')$  with $x,x'\in A,\ y,y'\in B$ by counting the size of the intersection 
\begin{equation}\label{informalIntersection}
\{(x,x',y,y'): P(x,y)-P(x',y')=0\} \cap (A^2\times B^2).
\end{equation}
We will express the surface $Z(P(x,y)-P(x',y'))$ as the graph $y' = G(x,x',y'),$ and then use Lemma \ref{Shmcor} to estimate the size (or more accurately, $\delta$-covering number) of the set \eqref{informalIntersection}. We now turn to the details.

\begin{defn}\label{defnHFDefn}
If $F(x,x',y,y')$ is a function (that is at least twice differentiable on its domain), define
\begin{equation}\label{defnHF}
\begin{split}
H_F(x,x',y,y') &= (\partial_x F)(\partial_{y'} F)(\partial_{x'y}F)-(\partial_xF)(\partial_yF)(\partial_{x'y'}F)\\
&\qquad-(\partial_{x'}F)(\partial_{y'}F)(\partial_{xy}F)+(\partial_{x'}F)(\partial_yF)(\partial_{xy'}F).
\end{split}
\end{equation}
We will be particularly interested in functions $F$ of the form $F(x,x',y,y') = P(x,y) - P(x',y')$, where $P$ is smooth. In this case $H_F$ simplifies to
\begin{equation}\label{defnHFForP}
H_F(x,x',y,y') = \partial_xP(x,y)\partial_yP(x,y)\partial_{x'y'}P(x',y')-\partial_{x'}P(x',y')\partial_{y'}P(x',y')\partial_{xy}P(x,y).
\end{equation}
\end{defn}

\begin{lem}\label{EnergyDispersionForF}
For every $0< \alpha< 1$ and $0 < \kappa \leq \alpha,$ there exists $\eps=\eps(\alpha,\kappa)>0$ such that the following holds. 
Let $F(x,x',y,y')$ be smooth on an open set containing $[0,1]^4$. Then the following is true for all $\delta>0$ sufficiently small (depending on $F$, $\alpha$, and $\kappa$). Let $I_1,\ldots,I_4\subset[0,1]$ be intervals. Suppose that $|H_F|\geq\delta^{\eps}$ on $I_1\times\ldots\times I_4$, and that 
\begin{equation}\label{maxAndMinComparable}
\delta^{\eps}\leq \max|\partial_x F|\leq 2 \min|\partial_x F|,
\end{equation}
(where the $\max$ and $\min$ are taken over $I_1\times I_2\times I_3\times I_4$), and similarly for $\partial_{x'} F,\partial_{y} F$, and $\partial_{y'} F$. 
Let $A_i\subset I_i$, $i=1,2,3,4$ be sets. Suppose that for each index $i=1,\ldots,4$ and each interval $J$ of length at least $\delta$, $A_i$ satisfies the non-concentration condition
\begin{equation}\label{noncon4}
\mathcal{E}_{\delta}(A_i\cap J)\le |J|^\kappa \delta^{-\alpha-\eps}.
\end{equation}

Then we have the energy dispersion estimate
\begin{equation}\label{energyNonConcentrationInLem}
\mathcal{E}_{\delta}\big((A_1\times A_2\times A_3\times A_4) \cap Z(F)  \big) \leq  \delta^{-3\alpha+\eps}.
\end{equation}
\end{lem}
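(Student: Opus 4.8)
The goal is to reduce the energy dispersion estimate \eqref{energyNonConcentrationInLem} to an application of Lemma \ref{Shmcor}. The key is to realize $Z(F)$ as the graph of a projection function. Since $|\partial_{y'}F|\sim\max|\partial_{y'}F|\geq\delta^{\eps}$ on $I_1\times I_2\times I_3\times I_4$, the implicit function theorem lets us write (after shrinking the intervals slightly and rescaling) the zero set $\{F=0\}$ locally as $y' = G(x,x',y)$ for a smooth function $G$ defined on a box $Q\times I$, where $Q\subset[0,1]^2$ corresponds to the $(x,x')$-variables and $I$ to the $y$-variable. Then
\begin{equation*}
\mathcal{E}_{\delta}\big((A_1\times A_2\times A_3\times A_4)\cap Z(F)\big)\lesssim \mathcal{E}_{\delta}\big(\{(a_1,a_2,a_3)\in A_1\times A_2\times A_3\colon G(a_1,a_2,a_3)\in N_{C\delta}(A_4)\}\big),
\end{equation*}
since a $\delta$-cube meeting $Z(F)$ projects to a triple $(a_1,a_2,a_3)$ for which $G$ lands within $O(\delta)$ of $A_4$ (using the uniform Lipschitz bound on $G$ coming from $|\partial_{y'}F|\gtrsim\delta^{\eps}$ and $\|F\|_{C^1}=O(1)$). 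Here $R := N_{C\delta}(A_4)$ satisfies $\mathcal{E}_{\delta}(R)\lesssim\mathcal{E}_{\delta}(A_4)\leq\delta^{-\alpha-\eps}$ by \eqref{noncon4} applied to $J=[0,1]$.

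**Verifying the hypotheses of Lemma \ref{Shmcor}.** With $\sigma = \sigma(\alpha,\kappa)$ the constant from Lemma \ref{Shmcor}, I would choose $\eps$ small enough (a suitable fraction of $\sigma$, with a factor accounting for the number of derivatives of $F$) that each of \eqref{boundOnC2NormG}--\eqref{lowerBoundThetap}, \eqref{nablaGRoughlyConst}, \eqref{nonconcorA1234} holds for $G$ and the sets $A_i$. The bounds \eqref{boundOnC2NormG} and \eqref{boundOnGz} follow from the standard formulas for derivatives of an implicitly defined function: $\partial_x G = -(\partial_x F)/(\partial_{y'}F)$, etc., together with $\|F\|_{C^2}=O(1)$ and $|\partial_{y'}F|\gtrsim\delta^{\eps}$; the second derivatives pick up an extra factor $|\partial_{y'}F|^{-2}\lesssim\delta^{-2\eps}$, which is still $\geq\delta^{-\sigma}$ after rescaling provided $\eps$ is small. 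The lower bound \eqref{boundOnNablaG} on $|\nabla G_{(z)}|$ follows from $|\partial_x F|\gtrsim\delta^\eps$ and $|\partial_{y'}F|=O(1)$. The condition \eqref{nablaGRoughlyConst} that $\nabla G$ is roughly constant is not automatic, but can be arranged by a further decomposition of $Q\times I$ into $O(\delta^{-O(\eps)})$ subboxes on each of which $|\partial_x F|$, $|\partial_{x'}F|$, $|\partial_{y}F|$, $|\partial_{y'}F|$ each vary by a factor of at most $2$ (this is already guaranteed by hypothesis \eqref{maxAndMinComparable}, so in fact a single box suffices, which is exactly why \eqref{maxAndMinComparable} is assumed). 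The non-concentration \eqref{nonconcorA1234} for $A_1,A_2,A_3$ is immediate from \eqref{noncon4} since $\mathcal{E}_{\delta}(A_i\cap J)\sim\delta^{-1}|A_i\cap J|$ after passing to $\delta$-neighborhoods, and $|A_i|\leq\delta^{-\alpha-\eps}\cdot\delta\leq\delta^{1-\alpha-\sigma}$ holds for $\eps$ small (or one first discards the case where some $A_i$ is too large or too small via a trivial bound, exactly as in the proof of Lemma \ref{Shmcor}).

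**The main point: the transversality condition \eqref{lowerBoundThetap}.** The condition $|\partial_z\theta_{(x,y)}(z)|\geq\delta^\sigma$, where $\theta_{(x,x')}(y)=\angle\operatorname{dir}(\nabla G_{(y)}(x,x'))$, is the crux and is precisely where the quantity $H_F$ enters. A direct computation shows that $\partial_y\theta_{(x,x')}(y)$ is, up to a nonvanishing factor of size $|\nabla G_{(y)}|^{-2}\sim 1$, equal to the Jacobian-type expression $\partial_x G\,\partial_{xy}G - \partial_{x'}G\cdot(\text{corresponding term})$, and upon substituting $\partial_x G = -F_x/F_{y'}$ etc.\ and clearing denominators this becomes $H_F$ divided by a power of $\partial_{y'}F$. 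Thus $|\partial_y\theta_{(x,x')}(y)|\gtrsim |H_F|\cdot|\partial_{y'}F|^{-O(1)}\gtrsim\delta^{\eps}\cdot\delta^{-O(\eps)}\cdot\delta^{O(\eps)}\geq\delta^\sigma$ once $\eps$ is chosen small enough relative to $\sigma$. I expect carrying out this identification of $\partial_y\theta$ with $H_F$ cleanly — getting the algebra right and tracking which power of $F_{y'}$ appears — to be the main technical obstacle; it is the analogue of the Elekes--R\'onyai observation that the relevant non-degeneracy is captured by a single explicit differential expression. Once \eqref{lowerBoundThetap} is established, Lemma \ref{Shmcor} applies directly and yields $\mathcal{E}_{\delta}(\{(a_1,a_2,a_3)\colon G(a_1,a_2,a_3)\in R\})\lesssim\delta^{-3\alpha+\sigma}$, and after summing over the $O(\delta^{-O(\eps)})$ boxes and absorbing constants this gives \eqref{energyNonConcentrationInLem} with a slightly smaller $\eps$.
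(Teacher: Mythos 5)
Your proposal follows the paper's argument almost exactly: use $|\partial_{y'}F|\gtrsim\delta^{\eps}$ to reduce to the projection onto the $(x,x',y)$-coordinates, realize $Z(F)$ as a graph $y'=G(x,x',y)$, verify the hypotheses of Lemma \ref{Shmcor} for $G$ with $R$ essentially equal to $A_4$, and obtain the transversality condition \eqref{lowerBoundThetap} from the lower bound on $|H_F|$. The computation you flag as the crux is precisely the one the paper carries out: with $\varphi_{(x,x')}(y)=\partial_{x'}G/\partial_xG=\tan(\theta_{(x,x')}(y))$ one gets $\partial_y\varphi_{(x,x')}=H_F/\big((\partial_{y'}F)(\partial_xF)^2\big)$, whence $|\partial_y\theta_{(x,x')}|\gtrsim\delta^{3\eps}$; your guess of ``$H_F$ divided by a power of the first derivatives'' is the right shape.

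The one step you gloss over is the domain of $G$, and it is a genuine (though fixable) gap. Lemma \ref{Shmcor} requires $G$ to be defined, with the stated bounds, on an entire box $Q\times I$ containing $A_1\times A_2\times A_3$. But $G$ is only defined on $V:=\pi\big(Z(F)\cap(I_1\times I_2\times I_3\times I_4)\big)$, i.e.\ on those $(x,x',y)$ for which the strictly monotone function $y'\mapsto F(x,x',y,y')$ actually changes sign on $I_4$; this set need not be a box, and $G$ cannot in general be extended past $\operatorname{bdry}(V)$ while keeping its values and derivative bounds under control. Your appeal to \eqref{maxAndMinComparable} disposes of the roughly-constant condition \eqref{nablaGRoughlyConst} but not of this issue. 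The paper handles it by decomposing $V$ into Whitney cubes via Lemma \ref{semiAnalyticBoundaryCubeDecomp}: the cubes that survive have sidelength $\gtrsim\delta^{O(\eps)}$, so there are only $\delta^{-O(\eps/\kappa)}$ of them, and the non-concentration of the $A_i$ shows that the portion of $A_1\times A_2\times A_3$ lying within the thin neighborhood of $\operatorname{bdry}(V)$ that is discarded contributes only $O(\delta^{-3\alpha+2\eps})$; Lemma \ref{Shmcor} is then applied on each surviving cube and the results are summed. Some device of this kind is needed to make your reduction legitimate.
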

\begin{proof}
First, we can suppose that the interior of $I_1\times I_2\times I_3\times I_4$ contains at least one point of $Z(F)$. Indeed, if $Z(F)\cap (I_1\times I_2\times I_3\times I_4)$ is empty then there is nothing to prove. If $Z(F)$ only intersects the boundary of $(I_1\times I_2\times I_3\times I_4)$, then we can enlarge the intervals $I_1,\ldots,I_4$ very slightly, and the hypotheses of the lemma (perhaps with the associated constants weakened by a factor of 2) will still be satisfied on these enlarged intervals. 

Next, observe that for each $(x,x',y)\in I_1\times I_2\times I_3$, there is at most one $y' \in I_4$ so that $F(x,x',y,y')=0$. Furthermore, by the lower bound \eqref{maxAndMinComparable} for $\partial_{y'} F$, we have that
\begin{equation*}
\begin{split}
\mathcal{E}_{\delta}\big((A_1\times A_2\times A_3\times & A_4) \cap Z(F)  \big) \\
&\leq \delta^{-\eps}\mathcal{E}_{\delta}\{(x,x',y)\in A_1\times A_2\times A_3\colon\ \exists\ y'\in A_4\ \textrm{s.t.}\ F(x,x',y,y')=0\}.
\end{split}
\end{equation*}
In particular, to establish \eqref{energyNonConcentrationInLem} it suffices to prove the estimate
\begin{equation}\label{sufficesToBoundProjection}
\mathcal{E}_{\delta}\Big( \pi\big( (A_1\times A_2\times A_3\times  A_4) \cap Z(F) \big) \Big)\leq \delta^{-3\alpha+2\eps},
\end{equation}
where $\pi\colon\RR^4\to\RR^3$ is the projection to the first three coordinates. 

Let $V=\pi\big(Z(F)\cap (I_1\times I_2\times I_3\times I_4)\big)$. Since the interior of $I_1\times I_2\times I_3\times I_4$ contains at least one point of $Z(F)$ and $\partial_{y'}F\neq 0$ on $I_1\times I_2\times I_3\times I_4$, we have that $V$ has non-empty interior, and  
\begin{equation}\label{formulaForBdryU}
\operatorname{bdry}(V)\subset \operatorname{bdry}(I_1\times I_2\times I_3)\ \cup\ \pi\Big( Z(F)\cap \big(I_1\times I_2\times I_3 \times \operatorname{bdry}(I_4) \big) \Big).
\end{equation}

Let $\eps=\eps(\alpha,\kappa)>0$ be a small quantity that will be specified below. If $\mathcal{E}_{\delta}(V\cap (A_1\times A_2\times A_3))\leq \delta^{-3\alpha + 2\eps}$ then \eqref{sufficesToBoundProjection} holds and we are done.  

Henceforth we shall assume that 
\begin{equation}\label{23ACapULarge}
\mathcal{E}_{\delta}(V\cap (A_1\times A_2\times A_3))\geq \delta^{-3\alpha + 2\eps}\ge \delta^{5\eps}\mathcal{E}_{\delta}(A_1\times A_2\times A_3).
\end{equation} %If $\eps$ is selected sufficiently small compared to $\kappa$, then 
%\begin{equation*}
%\mathcal{E}_{\delta}(V\cap (A_1\times A_2\times A_3))\geq \delta^{-3\alpha + 2\eps}\geq \delta^{\kappa-\eps}\mathcal{E}_\delta(A_1\times A_2\times A_3),
%\end{equation*}
%and thus 
%\begin{equation}\label{23ACapULarge}
%\delta^{\eps-\kappa}\ge \frac{\mathcal{E}_\delta(A_1\times A_2\times A_3)}{\mathcal{E}_{\delta}(V\cap (A_1\times A_2\times A_3))}.
%\end{equation}
%and thus condition \eqref{ACapULarge} from the statement of Lemma \ref{semiAnalyticBoundaryCubeDecomp} is satisfied. 
Select 
\begin{equation*}
p = \delta^{5\eps},
\end{equation*}
and apply Lemma \ref{semiAnalyticBoundaryCubeDecomp} to the sets $A = A_1\times A_2\times A_3$ and $V$ with this choice of $p$. Observe that if $\eps>0$ is selected sufficiently small compared to $\kappa$, and using \eqref{23ACapULarge}, condition \eqref{restrictionOnP} from the statement of Lemma \ref{semiAnalyticBoundaryCubeDecomp} is satisfied.  

We obtain a set $\mathcal{Q}$ of cubes, with 
\begin{equation}\label{numberOfCubesInDecomp}
\#\mathcal{Q}\leq  \delta^{-\frac{3 \eps}{\kappa}}p^{\frac{-3}{\kappa}}\Big(\frac{\mathcal{E}_{\delta}(A\cap V)}{\mathcal{E}_{\delta}(A)}\Big)^{-\frac{3}{\kappa}}\leq  \delta^{-\frac{C_1 \eps}{\kappa}},
\end{equation}
where $C_1$ is an absolute constant, and 
\begin{equation}\label{coveringNumberOutsideDecomp}
\mathcal{E}_{\delta}\left( A \cap V\setminus\bigcup_{Q\in\mathcal{Q}}Q\right)\lesssim p\mathcal{E}_{\delta}(A\cap V)\leq \delta^{-3\alpha+2\eps}.
\end{equation} 
Thus to establish \eqref{sufficesToBoundProjection} it suffices to prove that for each cube $Q\in\mathcal{Q}$, we have
\begin{equation}\label{neededCoveringBdEachCube}
\mathcal{E}_{\delta}\Big( \pi\big( (A_1^Q\times A_2^Q\times A_3^Q\times  A_4) \cap Z(F) \big) \Big) \leq \delta^{-3\alpha+(\frac{C_1}{\kappa}+2)\eps},
\end{equation}
where $A_1^Q\times A_2^Q\times A_3^Q=(A_1\times A_2\times A_3)\cap Q$. 

Fix a cube $Q\in\mathcal{Q}$, and define $G\colon Q \to\RR$ so that $Z(F)\cap (Q\times I_4)$ is the graph of $G$.  
By the implicit function theorem and \eqref{maxAndMinComparable} (and its analogues for $\partial_{x'} F,\partial_{y} F$, and $\partial_{y'} F$),  we have the bound
\begin{equation*}
C^{-1}\delta^{\eps} \le \max_{p\in Q}  \left|\frac{\partial_x F(p)}{\partial_{y'} F(p)}\right| \leq 4 \min_{p \in Q}  \left|\frac{\partial_x F(p)}{\partial_{y'} F(p)}\right|\le C \delta^{-\eps},
\end{equation*}
and similarly for $\left|\partial_{x'} F(p)/\partial_{y'} F(p)\right|$ and $\left|\partial_{y} F(p)/\partial_{y'} F(p)\right|$. Thus

\begin{equation}\label{nablaGEstimate}
C^{-1}\delta^{\eps}\le  \max_{(x,x',y)\in I_1^Q\times I_2^Q\times I_3^Q}|\nabla G_{(y)}(x,x')| \leq 4 \min_{(x,x',y)\in I_1^Q\times I_2^Q\times I_3^Q}|\nabla G_{(y)}(x,x')|.
\end{equation}

Differentiating our expression for $\partial_x G$ and $\partial_{x'} G$ a second time and using the fact that all of the second derivatives of $F$ are bounded on $I_1^Q\times I_2^Q\times I_3^Q \times I_4$, we see that each of the second derivatives of $G$ has magnitude at most $\lesssim \delta^{-3\eps}$ on $I_1^Q\times I_2^Q\times I_3^Q$, and thus 
\begin{equation}\label{C2GEstimate}
\sup_{y\in I_3^Q} \Vert G_{(y)}\Vert_{C^2(I_1^Q\times I_2^Q)}\lesssim \delta^{-3\eps}.
\end{equation}

Define 
\begin{equation*}
\varphi_{(x,x')}(y):=\frac{\partial_{x'} G(x,x',y)}{\partial_x G(x,x',y)} =\tan(\theta_{(x,x')}(y)).
\end{equation*} 
We have
\begin{equation*}
\varphi_{(x,x')}(y)=\frac{\partial_{x'} F}{\partial_x F},
\end{equation*}
and 
\begin{align*}
\partial_y & \varphi_{(x,x')}(y)\\
&=\frac{(\partial_x F)(\partial_{y'} F)(\partial_{x'y}F)-(\partial_xF)(\partial_yF)(\partial_{x'y'}F)-(\partial_{x'}F)(\partial_{y'}F)(\partial_{xy}F)+(\partial_{x'}F)(\partial_yF)(\partial_{xy'}F)}{(\partial_{y'} F) (\partial_x F)^2}\\
&=\frac{H_F}{(\partial_{y'} F) (\partial_x F)^2}.
\end{align*}
Since $|H_F|\geq \delta^\eps$ on $I_1\times I_2\times I_3\times I_4$, for every $(x,x',y)\in I_1^Q\times I_2^Q\times I_3^Q$ and $y'=G(x,x',y)$, we have
\begin{equation}\label{phiBd}
|\varphi_{(x,x')}(y)| \lesssim \delta^{-\eps},
\end{equation}
and
\begin{equation}\label{phiPrimeBd}
|\partial_y \varphi_{(x,x')}(y)|\gtrsim  \delta^\eps.
\end{equation}
Since 
\begin{equation*}
\begin{split}
|\partial_y \varphi_{(x,x')}(y)|
&=|\partial_y(\tan(\theta_{(x,x')}))|\\
&=|(1+\tan^2(\theta_{(x,x')}(y))||\partial_y \theta_{(x,x')}(y)|,
\end{split}
\end{equation*}
we have
\begin{equation*}
|\partial_y \theta_{(x,x')}(y)|= \frac{|\partial_y \varphi_{(x,x')}(y)|}{1+(\varphi_{(x,x')}(y))^2},
\end{equation*}
and thus
\begin{equation}\label{derivativeThetaEstimate}
\inf_{(x,x',y)\in I_1^Q\times I_2^Q\times I_3^Q} |\partial_y \theta_{(x,x')}(y)|\gtrsim \delta^{3\eps}.
\end{equation}

Note that the estimates \eqref{nablaGEstimate}, \eqref{C2GEstimate}, and \eqref{derivativeThetaEstimate} are precisely the requirements needed to apply Lemma \ref{Shmcor} to $G$ and the sets $A_1^Q,A_2^Q,A_3^Q$, and $R = A_4$.
If $\eps>0$ is sufficiently small (with respect to $\alpha$ and $\kappa$), then there exists $\sigma=\sigma(\alpha,\kappa)>0$ so that
\begin{equation}\label{bdOnZcapQcapA}
\mathcal{E}_{\delta}\Big( \pi\big( (A_1^Q\times A_2^Q\times A_3^Q\times  A_4) \cap Z(F) \big) \Big) \lesssim \delta^{-3\alpha+\sigma}.
\end{equation}
We conclude that if $\eps=\eps(\alpha,\kappa)>0$ is selected sufficiently small, then \eqref{neededCoveringBdEachCube} holds.
\end{proof}

\section{The auxiliary function $K_P$}\label{auxiliaryFunctionSection}
In this section we will define and study the auxiliary function $K_P$ discussed in the introduction. To begin, we recall the following result from \cite{ER}. A detailed proof can be found in \cite[Lemma 10]{RaSh}.
\begin{lem}[Elekes and R\'onyai \cite{ER}]\label{special}
Let $U\subset\RR^2$ be a connected open set and let $P\colon U\to\RR$ be analytic. If $P_x$ or $P_y$ vanishes identically on $U$, then $P$ is an analytic special form, in the sense of Definition \ref{defnAnalyticSpecialForm}. If neither $P_x$ nor $P_y$ vanishes identically on $U$, then $P$ is an analytic special form if and only if $\partial_{xy}\left(\log\left(\frac{\partial_xP}{\partial_yP}\right)\right)$ vanishes everywhere on $U$ that it is defined (i.e.~everywhere $\partial_xP$ and $\partial_yP$ are non-zero). 
\end{lem}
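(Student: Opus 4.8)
The plan is to treat the two cases of the lemma separately, the first being essentially trivial and the second being the heart of the matter.

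\emph{The degenerate case, and the forward implication.} If $P_x$ vanishes identically then $\{P_x=0\}=U$, so $U\setminus\bigl(\{P_x=0\}\cup\{P_y=0\}\bigr)=\emptyset$ and the defining condition of Definition~\ref{defnAnalyticSpecialForm} is satisfied vacuously; hence $P$ is an analytic special form, and similarly if $P_y\equiv 0$. Now suppose neither $P_x$ nor $P_y$ vanishes identically, so that $W:=U\setminus\bigl(\{P_x=0\}\cup\{P_y=0\}\bigr)$ is open and dense in $U$ and $\log|P_x/P_y|$ is analytic on $W$. If $P$ is an analytic special form, then on each connected component $V$ of $W$ we are given analytic $h,a,b$ with $P=h(a(x)+b(y))$; since $P_x=h'(a+b)\,a'$ and $P_y=h'(a+b)\,b'$ do not vanish on $V$, neither do $a',b'$, and $P_x/P_y=a'(x)/b'(y)$. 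Hence on $V$ we have $\log|P_x/P_y|=\log|a'(x)|-\log|b'(y)|$, a function of $x$ plus a function of $y$, so its mixed partial derivative vanishes. As the components $V$ cover $W$, the quantity $\partial_{xy}\log|P_x/P_y|$ vanishes wherever it is defined.

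\emph{The reverse implication.} Assume $\partial_{xy}\log|P_x/P_y|\equiv 0$ on $W$ and fix a connected component $V$ of $W$. The key auxiliary fact is: a real-analytic function $\Psi$ on a connected open subset of $\RR^2$ with $\partial_y\Psi\equiv 0$ is the pullback under $(x,y)\mapsto x$ of a real-analytic function on the (interval) image of the projection. Locally this is immediate by integrating; globally it follows by analytic continuation of the germ, using that the projection is an interval, hence simply connected, so the relevant monodromy is trivial (see \cite[Lemma~10]{RaSh} for a careful treatment). Applying this to $\Psi=\partial_x\log|P_x/P_y|$ — which is locally a function of $x$ alone, because $\partial_y\Psi=\partial_{xy}\log|P_x/P_y|=0$ — and then applying the symmetric statement once more, we obtain $\log|P_x/P_y|=u(x)+v(y)$ on all of $V$ with $u$ analytic on $\pi_x(V)$ and $v$ analytic on $\pi_y(V)$. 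Exponentiating, and using that the sign of $P_x/P_y$ is constant on the connected set $V$, we get $P_x/P_y=a'(x)/b'(y)$, where $a(x)=\int e^{u}$ and $b(y)=\pm\int e^{-v}$ are analytic with nowhere-vanishing (hence strictly monotone) derivatives. Set $Q(x,y)=a(x)+b(y)$; then $\nabla P$ and $\nabla Q$ are everywhere proportional and nonvanishing on $V$. Since $a$ is strictly monotone, the map $(x,y)\mapsto(Q(x,y),y)$ is an analytic diffeomorphism of $V$ onto an open set, and in these coordinates the proportionality of $\nabla P$ and $\nabla Q$ says exactly that $\partial_y$ of (the transported) $P$ vanishes; applying the auxiliary fact a third time gives $P=h\circ Q$ for an analytic $h$ on the interval $Q(V)$. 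Thus $P(x,y)=h(a(x)+b(y))$ on $V$, which is what we needed.

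\emph{Main obstacle.} The only genuinely delicate point is the globalization in the reverse implication: a connected component $V$ of $W$ need not be simply connected, since $\{P_xP_y=0\}$ may enclose a hole, so one cannot write $\log|P_x/P_y|=u(x)+v(y)$ on all of $V$ at one stroke. The resolution is the auxiliary fact above, whose proof hinges on the observation that although $V$ may be topologically complicated, its projections $\pi_x(V)$ and $\pi_y(V)$ are intervals, so the continuation of the germs of $u$, $v$ (and later $h$) takes place over a contractible base; real-analyticity, via the identity theorem on intervals, is precisely what forces the local germs to glue consistently. Everything else is routine integration, exponentiation, and an elementary coordinate change.
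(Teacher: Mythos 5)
Your proof is correct. Note that the paper does not actually prove Lemma \ref{special}---it defers entirely to \cite[Lemma 10]{RaSh}---and your argument is a sound, self-contained rendering of that standard proof: the vacuous reading of Definition \ref{defnAnalyticSpecialForm} in the degenerate case, the computation $P_x/P_y=a'(x)/b'(y)$ for the forward direction, and integration followed by the change of variables $(x,y)\mapsto(a(x)+b(y),y)$ for the converse. You also correctly isolate the one genuinely delicate point (a component $V$ of $U\setminus(\{P_x=0\}\cup\{P_y=0\})$ need not be simply connected, so ``locally a function of $x$ alone'' must be globalized) and resolve it the same way the cited reference does, by gluing the local germs over the interval $\pi_x(V)$ via the one-variable identity theorem.
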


\begin{defn}
If $P(x,y)$ is a real analytic function, define
\begin{equation*}
K_P(x,y) = \nabla P\wedge \nabla\Big(\frac{\partial_x P\partial_y P}{\partial_{xy}P}\Big).
\end{equation*}
\end{defn}
A computation shows that (formally),
\begin{equation}\label{relateKpLog}
\begin{split}
\big(\partial_x P \partial_y P\big)^2&\partial_{xy}\left(\log\left(\frac{\partial_xP}{\partial_yP}\right)\right)\\
&=(\partial_yP)^2\big( \partial_x P \partial_{xxy}P - \partial_{xx}P\partial_{xy}P\big)
-(\partial_x P)^2 \big( \partial_y P \partial_{xyy}P - \partial_{xy}P\partial_{yy}P\big)\\
&\qquad\qquad\qquad\qquad\qquad\qquad\qquad\qquad\qquad\qquad=\big(\partial_{xy}P\big)^2\Big(\nabla P \wedge \nabla \Big(\frac{\partial_x P \partial_y P}{\partial_{xy}P}\Big) \Big).
\end{split}
\end{equation}
If $P$ is smooth, then the middle expression is always well-defined. The first equality holds wherever the first expression is well-defined, while the second equality holds wherever the last expression is well-defined.

The identity \eqref{relateKpLog} and Lemma~\ref{special} has the following consequence.
\begin{lem}\label{whenIsPSpecialFormProp}
Let $U\subset\RR^2$ be a connected open set and let $P\colon U\to\RR$ be analytic (resp.~polynomial). Suppose that none of $\partial_x P$, $\partial_y P$, and $\partial_{xy}P$ vanishes identically on $U$. Then $K_P$ vanishes identically on $U$ if and only if $P$ is an analytic (resp.~polynomial) special form.
\end{lem}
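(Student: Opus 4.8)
The statement to prove is Lemma~\ref{whenIsPSpecialFormProp}: under the hypothesis that none of $\partial_x P$, $\partial_y P$, $\partial_{xy}P$ vanishes identically on $U$, the auxiliary function $K_P$ vanishes identically if and only if $P$ is a special form. The plan is to reduce this to Lemma~\ref{special} via the identity \eqref{relateKpLog}, being careful about where the various expressions are defined. First I would observe that, since $P$ is analytic and $\partial_x P$, $\partial_y P$, $\partial_{xy}P$ are not identically zero, the zero sets $Z(\partial_x P)$, $Z(\partial_y P)$, $Z(\partial_{xy}P)$ are proper real-analytic subvarieties of $U$, hence each is closed with empty interior, and their union $S$ is closed with empty interior; in particular $U\setminus S$ is open and dense in $U$, and (since $U$ is connected) one checks that $U\setminus S$ is still dense and that any analytic function vanishing on $U\setminus S$ vanishes on all of $U$ by the identity theorem.

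Next I would invoke the identity \eqref{relateKpLog}: on the open dense set $U\setminus S$, all three expressions in \eqref{relateKpLog} are well-defined, and the chain of equalities gives
\begin{equation*}
\big(\partial_x P\,\partial_y P\big)^2\,\partial_{xy}\!\left(\log\left(\frac{\partial_x P}{\partial_y P}\right)\right)
=\big(\partial_{xy}P\big)^2\, K_P(x,y)\qquad\text{on }U\setminus S.
\end{equation*}
Since $\partial_x P\,\partial_y P$ and $\partial_{xy}P$ are nonzero on $U\setminus S$, this shows that, restricted to $U\setminus S$, the function $\partial_{xy}\big(\log(\partial_x P/\partial_y P)\big)$ vanishes identically if and only if $K_P$ vanishes identically on $U\setminus S$. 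For the forward direction, if $K_P\equiv 0$ on $U$ then in particular $K_P\equiv 0$ on $U\setminus S$, so $\partial_{xy}\big(\log(\partial_x P/\partial_y P)\big)\equiv 0$ wherever defined, and Lemma~\ref{special} (whose hypothesis "neither $P_x$ nor $P_y$ vanishes identically" is part of our assumptions) gives that $P$ is an analytic special form. Conversely, if $P$ is a special form then Lemma~\ref{special} gives $\partial_{xy}\big(\log(\partial_x P/\partial_y P)\big)\equiv 0$ on $U\setminus S$, hence $K_P\equiv 0$ on $U\setminus S$; since $K_P$ is itself real analytic on $U$ (it is built from $\nabla P$ and gradients of the analytic function $\partial_x P\,\partial_y P/\partial_{xy}P$, which is analytic off the proper subvariety $Z(\partial_{xy}P)$) and vanishes on the open dense set $U\setminus Z(\partial_{xy}P)$, the identity theorem for real-analytic functions forces $K_P\equiv 0$ on all of $U$. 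The polynomial case is identical, since Lemma~\ref{special} and the special-form definitions carry the polynomial qualifier along unchanged, and $K_P$ is a rational function of the partial derivatives of $P$ so it vanishing on a dense open set means the polynomial $K_P\cdot(\partial_{xy}P)^2$ — which is the numerator appearing in \eqref{relateKpLog} — vanishes identically.

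The one genuine subtlety, and the step I expect to require the most care, is the bookkeeping about \emph{where} each of the three expressions in \eqref{relateKpLog} is defined, and making sure the "vanishes identically where defined" conclusions really propagate to $U$ rather than only to the smaller open set. The middle expression in \eqref{relateKpLog} is well-defined wherever $\partial_y P\neq 0$ (and $\partial_x P\neq 0$, for the $\log$ to make sense), the last expression wherever $\partial_{xy}P\neq 0$; so strictly the chain of equalities is only asserted on $U\setminus\big(Z(\partial_x P)\cup Z(\partial_y P)\cup Z(\partial_{xy}P)\big)$. The resolution is exactly the density/identity-theorem argument above: all of $K_P$, the polynomial $(\partial_{xy}P)^2 K_P$, and $(\partial_x P\,\partial_y P)^2\,\partial_{xy}\log(\partial_x P/\partial_y P)$ (the latter being a polynomial when $P$ is, namely the displayed combination of derivatives on the right-hand side of the first line of \eqref{relateKpLog}) are real-analytic on $U$, so vanishing on a dense open subset is equivalent to vanishing on $U$. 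Apart from this, the proof is a two-line deduction from Lemma~\ref{special} once \eqref{relateKpLog} is in hand.
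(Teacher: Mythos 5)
Your proposal is correct and follows exactly the route the paper intends: the paper states this lemma without proof, as an immediate consequence of the identity \eqref{relateKpLog} and Lemma~\ref{special}. Your write-up simply supplies the density/identity-theorem bookkeeping (propagating ``vanishes on $U\setminus\big(Z(P_x)\cup Z(P_y)\cup Z(P_{xy})\big)$'' to ``vanishes wherever defined'') that the paper leaves implicit, and that bookkeeping is handled correctly.
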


\begin{lem}\label{countingNumberQuadruplesNablePWedgeP}

Let $0< \kappa\leq \alpha< 1$, let $U\subset\RR^2$ be a connected open set that contains $[0,1]^2$, and let $P\colon U \to\RR$ be analytic. Then there exists $\eps=\eps(\alpha,\kappa)>0$ and $\delta_0=\delta_0(\alpha,\kappa,P)$ such that the following holds for all $0<\delta\leq\delta_0$. Let $Q,Q^\prime\subset [0,1]^2$ be squares, and suppose that on $Q\cup Q'$ we have 
\begin{equation}
 |\partial_{xy}P|\geq\delta^\eps,\ |K_P|\geq\delta^\eps.
\end{equation}
Suppose furthermore that
\begin{equation}
\begin{split}
&\delta^{\eps}\leq\max_{(x,y)\in Q}|\partial_x P(x,y)|\leq 2\min_{(x,y)\in Q}|\partial_x P(x,y)|,\\
&\delta^{\eps}\leq\max_{(x,y)\in Q}|\partial_y P(x,y)|\leq 2\min_{(x,y)\in Q}|\partial_y P(x,y)|,
\end{split}
\end{equation}
and similarly for $Q^\prime$. 

Let $A,A',B,B'$ be sets with $A\times B\subset Q$ and $A'\times B'\subset Q^\prime.$ Suppose that for all intervals $J$ of length at least $\delta$, $A$ and $B$ satisfy the non-concentration condition.
\begin{equation}
\begin{split}
\mathcal{E}_{\delta}(A \cap J) &\le  |J|^\kappa\delta^{-\alpha-\eps},\\
\mathcal{E}_{\delta}(B \cap J) &\le  |J|^\kappa\delta^{-\alpha-\eps},
\end{split}
\end{equation}
and similarly for $A^\prime$ and $B^\prime$. Then
\begin{equation}\label{semiDiscretizedEnergy}
\mathcal{E}_{\delta}\big(\{ (x,x',y,y')\in A\times A'\times B\times B' \colon P(x,y) = P(x^\prime,y^\prime)\}\big)\lesssim \delta^{-3\alpha+\eps}.
\end{equation}
\end{lem}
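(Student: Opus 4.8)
The plan is to apply Lemma~\ref{EnergyDispersionForF} to $F(x,x',y,y')=P(x,y)-P(x',y')$ after peeling off a small exceptional set on which the hypothesis $|H_F|\ge\delta^{\eps}$ fails. Set $g=(\partial_xP\,\partial_yP)/\partial_{xy}P$, which is analytic wherever $\partial_{xy}P\ne0$, in particular on $Q\cup Q'$. Dividing \eqref{defnHFForP} by $\partial_{xy}P(x,y)\,\partial_{x'y'}P(x',y')$ gives the identity
\[
H_F(x,x',y,y')=\partial_{xy}P(x,y)\,\partial_{x'y'}P(x',y')\,\bigl(g(x,y)-g(x',y')\bigr),
\]
and since $K_P=\nabla P\wedge\nabla g$, on $Q$ and on $Q'$ we have $|K_P|/|\nabla P|\le|\nabla g|\lesssim_P\delta^{-2\eps}$ (so $|\nabla g|\gtrsim_P\delta^{\eps}$), while each of $K_P,\partial_xP,\partial_yP,\partial_{xy}P$ has constant sign on the connected sets $Q,Q'$. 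Let $\eps_0=\eps_0(\alpha,\kappa)>0$ be the constant from Lemma~\ref{EnergyDispersionForF}. I will take $\eps=\eps(\alpha,\kappa)$ much smaller than $\eps_0\kappa$, fix a threshold $\rho=\delta^{c}$ for a small power $c=c(\eps,\kappa)$ chosen at the end, and allow the implicit constants ``$\lesssim_P$'' to depend on $P$, absorbing them into $\delta_0$ at the very end.

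First I would dispose of two trivial cases. If $\operatorname{side}(Q)\le\delta^{1-2\eps}$ (or symmetrically $\operatorname{side}(Q')\le\delta^{1-2\eps}$), then $\mathcal{E}_{\delta}(A\times B)\le(\operatorname{side}(Q)/\delta)^2\le\delta^{-4\eps}$, and \eqref{semiDiscretizedEnergy} follows from $\mathcal{E}_{\delta}(\mathrm{LHS})\le\mathcal{E}_{\delta}(A\times B)\,\mathcal{E}_{\delta}(A'\times B')$ and $\mathcal{E}_{\delta}(A'\times B')\le\delta^{-2\alpha-2\eps}$. So assume both squares have side $>\delta^{1-2\eps}$. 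Split the set $S$ in \eqref{semiDiscretizedEnergy} as $S_{\mathrm{main}}\cup S_{\mathrm{deg}}$ with $S_{\mathrm{deg}}=S\cap\{|g(x,y)-g(x',y')|<\rho\}$. For $S_{\mathrm{main}}$: regard $Q\times Q'$ as a product of four intervals in $(x,x',y,y')$-space (legitimate since $Q,Q'$ are squares) and subdivide it into $\lesssim_P\delta^{-O(c+\eps)}$ subboxes of side $\sim\rho\,\delta^{2\eps}$, chosen small enough that $g(x,y)-g(x',y')$ oscillates by less than $\rho/2$ on each. On a subbox meeting $\{|g(x,y)-g(x',y')|\ge\rho\}$ one then has $|H_F|\ge\tfrac12\delta^{2\eps}\rho\ge\delta^{\eps_0}$ (for $\delta$ small, provided $2\eps+c<\eps_0$); the comparability conditions \eqref{maxAndMinComparable} for $\partial_xF,\partial_{x'}F,\partial_yF,\partial_{y'}F$, which equal $\pm\partial_xP,\pm\partial_yP$ evaluated at $(x,y)$ or $(x',y')$, are inherited (with $\eps_0$ in place of $\eps$, at the cost of a factor $2$) from the hypotheses on $Q,Q'$; and restricting $A,A',B,B'$ to the subbox's intervals preserves \eqref{noncon4} with the smaller exponent $\eps\le\eps_0$. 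Hence Lemma~\ref{EnergyDispersionForF} applies on each subbox, giving $\le\delta^{-3\alpha+\eps_0}$, and summing yields $\mathcal{E}_{\delta}(S_{\mathrm{main}})\lesssim_P\delta^{-O(c+\eps)}\,\delta^{-3\alpha+\eps_0}$.

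For $S_{\mathrm{deg}}$ I would fibre over $(x,y)$. Fix $(x,y)\in A\times B$ and put $c_1=P(x,y)$, $c_2=g(x,y)$. Since $\partial_xP,\partial_yP$ are nonvanishing (hence of constant sign) on $Q'$, the level set $\{P=c_1\}\cap Q'$ is the graph $x'=\psi(y')$ of a monotone analytic function over a single interval $J'$, with $|\psi'|=|\partial_yP/\partial_xP|\lesssim_P\delta^{-\eps}$; along this graph $\frac{d}{dy'}g(\psi(y'),y')=K_P/\partial_xP$ has constant sign and modulus $\gtrsim_P\delta^{\eps}$, so $\{y'\in J':|g(\psi(y'),y')-c_2|<\rho\}$ is a subinterval $J''$ with $|J''|\lesssim_P\rho\,\delta^{-\eps}=\delta^{c-\eps}$. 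Thus (after harmlessly $O(\delta)$-thickening the equation $P(x,y)=P(x',y')$, and a negligible correction near $\partial Q'$ that is controlled using $\operatorname{side}(Q')>\delta^{1-2\eps}$) the pairs $(x',y')\in A'\times B'$ contributing to the fibre over $(x,y)$ lie within $O_P(\delta^{1-\eps})$ of the graph of $\psi$ over $B'\cap J''$, so that fibre has $\delta$-covering number $\lesssim_P\delta^{-\eps}\,\mathcal{E}_{\delta}(B'\cap J'')\lesssim_P\delta^{-\eps}(\delta^{c-\eps})^{\kappa}\delta^{-\alpha-\eps}$ by the non-concentration of $B'$. Summing over the $\le\mathcal{E}_{\delta}(A\times B)\le\delta^{-2\alpha-2\eps}$ fibres gives $\mathcal{E}_{\delta}(S_{\mathrm{deg}})\lesssim_P\delta^{-3\alpha-4\eps+(c-\eps)\kappa}$.

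Finally, choosing $c$ with $\tfrac{6\eps}{\kappa}+\eps\le c\le\tfrac{\eps_0-10\eps}{4}$ — a nonempty range once $\eps$ is small enough in terms of $\alpha,\kappa$ — makes both $\mathcal{E}_{\delta}(S_{\mathrm{main}})$ and $\mathcal{E}_{\delta}(S_{\mathrm{deg}})$ at most $\delta^{-3\alpha+2\eps}$, and taking $\delta_0=\delta_0(\alpha,\kappa,P)$ small absorbs the $P$-dependent constants to yield \eqref{semiDiscretizedEnergy} (after renaming $2\eps$ as $\eps$). The hard part is exactly this reconciliation of two exponent budgets: Lemma~\ref{EnergyDispersionForF} saves only $\delta^{\eps_0}$ with $\eps_0$ a fixed, not‑arbitrarily‑small constant, which caps $c$ from above by $\approx\eps_0/4$, whereas the fibrewise count of $S_{\mathrm{deg}}$ improves over the trivial bound only by the factor $\rho^{\kappa}$ and so needs $c\gtrsim\eps/\kappa$ from below; checking that a legal window for $c$ (hence a legal $\eps\lesssim\eps_0\kappa$) exists is the main quantitative point. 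A secondary nuisance is the structural control of the level sets of $P$ inside $Q'$ used in the $S_{\mathrm{deg}}$ step (connectedness, the graph representation, and the monotonicity and Lipschitz bounds), which rely only on $|\partial_xP|,|\partial_yP|\ge\delta^{\eps}$ on $Q'$ together with the mild largeness $\operatorname{side}(Q')>\delta^{1-2\eps}$ imposed by hand to kill boundary effects.
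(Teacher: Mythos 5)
Your proposal is correct and follows essentially the same route as the paper's proof: the same factorization $H_F=\partial_{xy}P(x,y)\,\partial_{x'y'}P(x',y')\bigl(g(x,y)-g(x',y')\bigr)$ with $g=\partial_xP\,\partial_yP/\partial_{xy}P$, the same level-curve argument (using that $dg/ds=K_P/|\nabla P|$ along level sets of $P$) to handle the degenerate quadruples, and Lemma~\ref{EnergyDispersionForF} for the rest. Your explicit subdivision of $Q\times Q'$ into subboxes on which $g(x,y)-g(x',y')$ is essentially constant, together with the bookkeeping of the window for $c$, just makes precise a reduction the paper leaves implicit when it invokes Lemma~\ref{EnergyDispersionForF}.
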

\begin{proof}
Without loss of generality we can suppose that $A'$ and $B'$ are $\delta$-separated, and thus
\begin{equation}\label{cardApBp}
\#(A'\times B')\leq \delta^{-2\alpha-\eps}.
\end{equation}

Define $F(x,x',y,y') =P(x,y)-P(x',y')$ and recall Definition \ref{defnHFDefn}, which defines the auxiliary function $H_F$. By \eqref{defnHFForP} we have 
\begin{equation*}
\begin{split}
H_F(x,x',y,y') %&= H_F(x,x',y,y') = \partial_xP(x,y)\partial_yP(x,y)\partial_{x'y'}P(x',y')-\partial_{x'}P(x',y')\partial_{y'}P(x',y')\partial_{xy}P(x,y)\\
&=\partial_{xy}P(x,y)\partial_{x'y'}P(x^\prime,y^\prime)\Big(\frac{\partial_x P (x,y) \partial_y P(x,y)}{\partial_{xy}P(x,y)}-\frac{\partial_{x'}P(x^\prime,y^\prime)\partial_{y'}P(x^\prime,y^\prime)}{\partial_{x'y'}P(x^\prime,y^\prime)}\Big).
\end{split}
\end{equation*} 
Next we need to establish the estimate
\begin{equation}\label{coveringNumberH5small}
\mathcal{E}_{\delta}\big(\{ (x,x',y,y')\in A\times A'\times B\times  B^\prime \colon P(x,y) = P(x^\prime,y^\prime),\ |H_F(x,x',y,y')|<\delta^{4\eps/\kappa+3\eps}\}\big)\lesssim \delta^{-3\alpha+\eps}.
\end{equation}
Fix $x^\prime,y^\prime\in A^\prime \times B^\prime$. Use Theorem \ref{stratificationRealAnalSet} to cut $\{(x,y)\in Q\colon P(x,y) = P(x^\prime,y^\prime)\}$ into $O(1)$ smooth simple curves, where the implicit constant depends on $P$. Let $\gamma$ be one of these curves, and let $\gamma(t)\colon I\to Q$ be a unit speed parameterization of $\gamma$. For each $t\in I$, the unit tangent vector to $\gamma$ at $t$ is given by $\frac{\nabla P^\perp (\gamma(t))}{|\nabla P(\gamma(t))|}$. Thus for all $t\in I$ we have
\begin{equation*}
\begin{split}
\Big|\partial_t\Big(\frac{\partial_xP(\gamma(t))\partial_yP(\gamma(t))}{\partial_{xy}P(\gamma(t))}\Big)\Big|&=\Big|\frac{\nabla P^\perp}{|\nabla P|} \cdot \nabla\big(\frac{(\partial_x P)(\partial_y P)}{\partial_{xy}P}\big)(\gamma(t))\Big)\Big| \\
&= \frac{1}{|\nabla P|} \Big (\nabla P\wedge \nabla\big(\frac{(\partial_x P)(\partial_y P)}{\partial_{xy}P}\big)\Big)(\gamma(t))|\\
&= \frac{|K(\gamma(t))|}{|\nabla P|}\\
&\gtrsim \delta^{\eps}.
\end{split}
\end{equation*}
We conclude that for $(x^\prime,y^\prime)$ fixed, the set
\begin{equation*}
\Big\{(x,y)\in Q\colon P(x,y)=P(x^\prime,y^\prime),\ \Big|\frac{\partial_x P(x,y)\partial_y P(x,y)}{\partial_{xy}P(x,y)}-\frac{\partial_{x'}P(x',y')\partial_{y'}P(x',y')}{\partial_{x'y'}P(x',y')}\Big|\leq \delta^{4\eps/\kappa+\eps}\Big\}
\end{equation*}
is a union of $O(1)$ curve segments, each of length $\lesssim \frac{\delta^{4\eps/\kappa+\eps}}{\delta^{\eps}}=\delta^{4\eps/\kappa}$. Since $|\partial_{xy}P|\geq\delta^{\eps}$ on $Q\cup Q^\prime$, we conclude that the set
\begin{equation*}
\big\{(x,y)\in Q\colon P(x,y)=P(x^\prime,y^\prime),\ |H_F(x,x^\prime,y,y^\prime)|\leq \delta^{4\eps/\kappa+3\eps}\big\}
\end{equation*}
is a union of $O(1)$ curve segments, each of length $\lesssim \delta^{4\eps/\kappa}$. Thus
\begin{equation}\label{coveringNumberFixedxpyp}
\mathcal{E}_{\delta}\Big(\big\{(x,y)\in  A\times B \colon P(x,y)=P(x^\prime,y^\prime),\ |H(x,x^\prime,y,y^\prime)|\leq\delta^{4\eps/\kappa+3\eps}\big\}\Big) \lesssim \delta^{-\alpha-\eps} (\delta^{4\eps/\kappa})^{\kappa}=\delta^{-\alpha+3\eps}.
\end{equation}
This estimate holds for each $(x^\prime,y^\prime)\in A_1^\prime\times A_2^\prime$. \eqref{coveringNumberH5small} now follows from combining \eqref{cardApBp} and \eqref{coveringNumberFixedxpyp}.

Finally, we apply Lemma \ref{EnergyDispersionForF} to bound
\begin{equation}\label{energyBigH5}
\mathcal{E}_{\delta}\Big(\big\{ (x,x',y,y')\in A\times A'\times  B\times  B^\prime \colon P(x,y)=P(x^\prime,y^\prime),\ |H(x,x^\prime,y,y^\prime)|\geq\delta^{4\eps/\kappa+3\eps}\big\}\Big) \lesssim \delta^{-3\alpha+\eps}.
\end{equation}
Combining \eqref{coveringNumberH5small} and \eqref{energyBigH5}, we obtain \eqref{semiDiscretizedEnergy}, which completes the proof.
\end{proof}

\section{Proof of Theorem \ref{main-entropy-growth}} \label{mainThmProofsSectionOne}

At this point, we have assembled most of the necessary tools to prove Theorem \ref{main-entropy-growth}.  Lemma \ref{cubesWhereAnalyticFLarge} allows us to find a large square $Q$ where $P_x,P_y,P_{xy}$ and $K_P$ are large. Lemma~\ref{countingNumberQuadruplesNablePWedgeP} then says that there are few solutions to $P(a,b) = P(a',b')$ on this square. The final step is to use Cauchy-Schwarz to conclude that $P\big((A\times B) \cap Q\big)$ must be large. Here is a precise version of that statement
\begin{lem}\label{CSLem}
Let $Q\subset[0,1]$ be a square, and let $P\colon Q\to\RR$ be a smooth function with 
\[
|\partial_x P|\geq c,\quad |\partial_y P|\geq c\quad\textrm{on}\ Q.
\]
Let $X\subset Q$ be a union of squares of side-length $\delta$. Then
\[
\mathcal{E}_{\delta}(P(X))\gtrsim \frac{c\big(\mathcal{E}_{\delta}(X)\big)^2}{\mathcal{E}_{\delta}\big(\{(x,y,x',y')\in X^2\colon P(x,y) = P(x',y')\}\big)}.
\]
\end{lem}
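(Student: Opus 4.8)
The plan is a standard Cauchy--Schwarz (or "popularity") double-counting argument, adapted to the $\delta$-discretized setting. First I would replace the geometric objects by their natural discrete proxies: since $X$ is a union of $\delta$-squares, $\mathcal{E}_\delta(X)\sim \delta^{-2}|X|$, and likewise the quadruple set $\{(x,y,x',y')\in X^2 : P(x,y)=P(x',y')\}$, being essentially a union of $\delta$-cells in $\RR^4$ (thickened slightly using $|\nabla P|\ge c$ so that the level-set condition $P(x,y)=P(x',y')$ becomes a genuine $\delta$-thick slab), has $\delta$-covering number comparable to its $4$-dimensional Lebesgue measure times $\delta^{-4}$. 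So it suffices to prove the corresponding inequality for Lebesgue measures, up to the factor of $c$ coming from the thickness of the slab $\{|P(x,y)-P(x',y')|\le\delta\}$ in the $y'$ (or $x'$) direction.

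Next I would set up the counting function. Cover the range $P(Q)\subset\RR$ by $\sim\mathcal{E}_\delta(P(X))$ intervals $\{I_k\}$ of length $\delta$, and for each $k$ let $X_k = X\cap P^{-1}(I_k)$, so $\sum_k |X_k| \gtrsim |X|$ (each point of $X$ lands in $O(1)$ of the $I_k$). By Cauchy--Schwarz,
\[
|X|^2 \;\lesssim\; \Big(\sum_k |X_k|\Big)^2 \;\le\; \mathcal{E}_\delta(P(X)) \cdot \sum_k |X_k|^2.
\]
The term $\sum_k |X_k|^2 = \sum_k |X_k\times X_k|$ counts (Lebesgue-measure-wise) pairs $\big((x,y),(x',y')\big)\in X\times X$ with $P(x,y)$ and $P(x',y')$ lying in a common interval $I_k$, hence with $|P(x,y)-P(x',y')|\lesssim\delta$. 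Using the lower bound $|\partial_{y'}P|\ge c$ (so that for fixed $(x,y,x')$ the set of $y'$ with $|P(x,y)-P(x',y')|\le C\delta$ has measure $\lesssim \delta/c$), this quantity is $\lesssim \tfrac{1}{c}\cdot \delta \cdot \delta^{3}\mathcal{E}_\delta\big(\{(x,y,x',y')\in X^2: P(x,y)=P(x',y')\}\big)$, i.e.\ $\lesssim \tfrac{1}{c}\,\delta^{4}\,\mathcal{E}_\delta(\text{quadruples})$. Substituting $|X|\sim\delta^2\mathcal{E}_\delta(X)$ and rearranging gives exactly
\[
\mathcal{E}_\delta(P(X)) \;\gtrsim\; \frac{c\,\big(\mathcal{E}_\delta(X)\big)^2}{\mathcal{E}_\delta\big(\{(x,y,x',y')\in X^2: P(x,y)=P(x',y')\}\big)}.
\]

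The only genuinely non-routine point is the passage between $\delta$-covering numbers and Lebesgue measures for the quadruple set: the condition $P(x,y)=P(x',y')$ cuts out a $3$-dimensional surface in $\RR^4$, which has zero $4$-measure, so one must be careful to interpret $\mathcal{E}_\delta(\{\cdots\})$ as the covering number of the $\delta$-neighborhood (equivalently of the set of $\delta$-cells meeting the surface) and to check that thickening the constraint from $P(x,y)=P(x',y')$ to $|P(x,y)-P(x',y')|\le C\delta$ changes the count by at most a constant factor. This is where the transversality hypothesis $|\nabla P|\ge c$ (in particular $|\partial_{y'}P|\ge c$) is used, and it is the source of the single factor of $c$ in the statement; everything else is bookkeeping with the $\lesssim$ notation, whose implicit constants are permitted to depend on $d$ (here $d=2$), $\alpha$, and $\kappa$ but not on $\delta$.
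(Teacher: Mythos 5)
Your overall architecture (Cauchy--Schwarz over level sets of $P$, then a comparison between approximate and exact coincidences) matches the paper's, but there is a genuine gap at precisely the step you flag as the non-routine one, namely
\[
\sum_k|X_k|^2\;\lesssim\;\tfrac{1}{c}\,\delta^4\,\mathcal{E}_{\delta}\big(\{(x,y,x',y')\in X^2\colon P(x,y)=P(x',y')\}\big).
\]
The fiber bound $|\{y'\colon |P(x,y)-P(x',y')|\le C\delta\}|\lesssim\delta/c$ only controls the measure of the approximate-solution set by $\frac{\delta}{c}$ times the measure of its projection to $(x,y,x')$-space; you then need that projection to be comparable to $\delta^3$ times the covering number of the \emph{exact} solution set inside $X^2$, and this can fail by a further factor of $1/c$. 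The obstruction is that correcting an approximate solution to an exact one requires moving $y'$ by as much as $\delta/c\gg\delta$, and the corrected point need not lie in $X$. Concretely, take $P(x,y)=c(x+y)$ and $X=\bigcup_{j}S_j$ with $S_j=[3j\delta,3j\delta+\delta]\times[0,\delta]$, $0\le j\le J\sim 1/c$: the images $P(S_j)$ are pairwise disjoint intervals of length $2c\delta$ all contained in a single interval of length $\le\delta$. Every one of the $\sim c^{-2}$ pairs $(S_j,S_{j'})$ contributes a full $\delta^4$ to $\sum_k|X_k|^2$, while only the $\sim c^{-1}$ diagonal pairs meet the exact quadruple set. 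Here the displayed inequality holds with equality up to constants, but your argument, made precise, certifies only the weaker bound with $1/c^2$; since in the applications $c=\delta^w$ is a negative power of $\delta$, this loss is not absorbable into implicit constants. (The displayed inequality is in fact true, but proving it requires a global counting argument---e.g.\ for each $k$, partition the intervals $\{P(S)\}$ meeting $I_k$ into $O(1/c)$ cliques of pairwise-intersecting intervals, using $|P(S)|\ge c\delta$---rather than a pointwise perturbation.)

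The paper sidesteps this by discretizing the \emph{values} of $P$ at scale $c\delta$ rather than $\delta$: it rounds $P$ to $\tilde P\in\frac12 c\delta\,\ZZ$, so a coincidence $\tilde P(\tilde x,\tilde y)=\tilde P(\tilde x',\tilde y')$ forces $|P(\tilde x,\tilde y)-P(\tilde x',\tilde y')|\le\frac12c\delta$, and the correction to an exact solution moves $y'$ by at most $\delta/2$, which stays inside $X$. Cauchy--Schwarz then lower-bounds the number of distinct $\tilde P$-values, i.e.\ a covering number of $P(X)$ at scale $\sim c\delta$, and converting back to $\mathcal{E}_{\delta}(P(X))$ costs exactly one factor of $c$. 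So the factor of $c$ in the statement comes from this final rescaling of the covering number, not from the transversality/fiber bound as you assert. Your proof can be repaired the same way: take the $I_k$ of length $c\delta$ (so there are $\sim\mathcal{E}_{c\delta}(P(X))$ of them), note that the resulting approximate coincidences can be upgraded to exact ones without leaving $X$, and finish with $\mathcal{E}_{\delta}(P(X))\gtrsim c\,\mathcal{E}_{c\delta}(P(X))$.
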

\begin{proof}
Since $X$ is a union of squares of side-length $\delta$, we can select a $\delta$-separated set $\tilde X\subset X$ with $N_{\delta/2}(\tilde X) \subset X$ and $\#\tilde X = \mathcal{E}_{\delta}(X)$.

Define 
\begin{equation*}
\tilde P(x,y)=\frac{1}{2}c\delta\lfloor 2(c\delta)^{-1}P(x,y)\rfloor.
\end{equation*}
Let $\mathcal{Q}$ be the set of quadruples $(\tilde x, \tilde y, \tilde x', \tilde y')\in (\tilde X)^2$  with  $\tilde P(\tilde x, \tilde y) = \tilde P(\tilde x^\prime,\tilde y^\prime).$ For such a quadruple we have
\begin{equation}\label{tildePCloseToP}
|P(\tilde x, \tilde y) - P(\tilde x^\prime,\tilde y^\prime)|\leq \frac{1}{2}c\delta.
\end{equation}
Since $|\partial_y P|\geq c $ on $Q$, there exists $y^\prime$ with $|\tilde y'-y'|<\delta/2$ so that $P(\tilde x,\tilde y) = P(\tilde x^\prime,y^\prime)$. Since $N_{\delta/2}(\tilde X) \subset X$, we have $(\tilde x',y')\in X$. Since the quadruples in $\mathcal{Q}$ are $\delta$-separated, the corresponding quadruples $\{(\tilde x,\tilde y,\tilde x', y'\}$ are $\delta/2$ separated. In particular, we have 
\begin{equation}\label{boundSizeQVsPQuadruples}
\# \mathcal{Q}
\leq 16 \mathcal{E}_{\delta}\big(\{(x,y,x',y')\in X^2\colon P(x,y) = P(x',y')\}\big).
\end{equation}
By Cauchy-Schwarz, 
\begin{equation}\label{CSBoundTildeP}
\# \tilde P(\tilde X) \geq
(\# \tilde X)^2/ \#\mathcal{Q}.
\end{equation}
Finally, since $\tilde P$ takes values in $(\frac{1}{2}c\delta)\ZZ$ and in view of \eqref{tildePCloseToP},
 \begin{equation}\label{controlPAB}
\mathcal{E}_{\delta}(P(X))   \gtrsim c\big(\#\tilde P(\tilde X)\big).
\end{equation} 
The result now follows by combining \eqref{boundSizeQVsPQuadruples}, \eqref{CSBoundTildeP}, and \eqref{controlPAB}. 
\end{proof}

We are now ready to prove Theorem \ref{main-entropy-growth}. For the reader's convenience we will restate it here.
\begin{main-entropy-growthThm}
Let $0< \kappa\leq \alpha< 1$, let $U\subset\RR^2$ be a connected open set that contains $[0,1]^2$, and let $P\colon U \to\RR$ be analytic (resp.~polynomial). Then either $P$ is an analytic (resp.~polynomial) special form, or there exists $\eps = \eps(\alpha,\kappa)>0$ and $\eta = \eta(\alpha,\kappa,P)>0$ so that the following is true for all $\delta>0$ sufficiently small.

Let $A,B\subset[0,1]$ be sets with $\mathcal{E}_{\delta}(A)\geq\delta^{-\alpha}$, $\mathcal{E}_{\delta}(B)\geq\delta^{-\alpha},$ and suppose that for all intervals $J$ of length at least $\delta$, $A$ and $B$ satisfy the non-concentration conditions
\begin{equation*}\tag{\ref{nonConcentrationCond2}}
\begin{split}
\mathcal{E}_{\delta}(A \cap J) & \le  |J|^\kappa\delta^{-\alpha-\eta},\\
\mathcal{E}_{\delta}(B \cap J) & \le  |J|^\kappa\delta^{-\alpha-\eta}.
\end{split}
\end{equation*}
Then we have the entropy growth estimate
\begin{equation*}\tag{\ref{entropyGrowth}}
\mathcal{E}_{\delta}(P(A,B))\geq \delta^{-\alpha-\eps}.
\end{equation*}
If $P$ is a polynomial, then the quantity $\eta$ can be taken to depend only on $\alpha,\kappa$, and $\deg P$.
\end{main-entropy-growthThm}
\begin{proof}
First, since $|\nabla P|$ is bounded on $[0,1]^2$, we can replace $A$ and $B$ with their $\delta/2$ neighborhoods; doing this will only increase $\mathcal{E}_{\delta}(P(A,B))$ by a constant (depending on $P$, but independent of $\delta$) factor. Thus we can suppose that $A$ and $B$ are unions of $\delta$-intervals. 
We can assume that none of $\partial_x P$, $\partial_y P,$ $\partial_{xy}P$, or $K_P$ vanishes identically, since if any of these quantities vanish identically, then by Lemma~\ref{whenIsPSpecialFormProp}, $P$ is a special form and we are done. 

Let $C$ be the constant obtained by applying Lemma \ref{cubesWhereAnalyticFLarge} to the functions $\partial_x P$, $\partial_y P,$ $\partial_{xy}P$, and $K_P$. (Recall that in general $C$ depends on $P$, but if $P$ is a polynomial then $C$ depends only on $\deg P$.) We will apply this lemma with $w \sim C\eta/\kappa$ ($\eta>0$ will be chosen small enough that $w<\kappa/2$), and let $\mathcal{Q}$ be the resulting collection of cubes. With this choice of $w$, we have
\begin{equation*}
\mathcal{E}_{\delta}\Big( (A\times B) \cap \bigcup_{Q\in\mathcal{Q}}Q\Big)\geq \frac{1}{2}\mathcal{E}_{\delta}(A\times B).
\end{equation*}
By pigeonholing, there is a cube $Q\in\mathcal{Q}$ so that
\begin{equation*}
\mathcal{E}_{\delta}((A\times B)\cap Q) \gtrsim \delta^{4w/\kappa}\mathcal{E}_{\delta}(A\times B)\gtrsim \delta^{4C\eta/\kappa^2-2\alpha}.
\end{equation*}
Let $A_1\times B_1 = (A\times B)\cap Q$.  If $\eta = \eta(\alpha,\kappa,C)$ is selected sufficiently small, then by  Lemma~\ref{countingNumberQuadruplesNablePWedgeP}, there exists $\eps^\prime = \eps^\prime(\alpha,\kappa)>0$ so that 
\begin{equation}\label{quadruples}
{\mathcal E}_\delta\big(\{ ( x,  x',  y,  y')\in A_1^2\times B_1^2\colon P(x,y)=P(x',y')\}\big)\lesssim \delta^{-3\alpha+\eps^\prime}. 
\end{equation}

Applying Lemma \ref{CSLem} to $A_1\times B_1\subset Q$ with $c = \delta^w$, we conclude that
\[
\mathcal{E}_{\delta}(P(A,B))\geq \mathcal{E}_{\delta}(P(A_1,B_1))\gtrsim   \frac{\delta^{w} (\delta^{4C\eta/\kappa^2-2\alpha})^2}{\delta^{-3\alpha+\eps^\prime}}\geq \delta^{9C\eta/\kappa^2-\alpha-\eps'}.
\]
% Since such quadruples are $\delta$-separated, and in view of \eqref{quadruples}, we get
% \begin{equation*}
% \#\{ ( \tilde x,  \tilde x',  \tilde y,  \tilde y')\in \tilde A^2\times \tilde B^2\colon \tilde P(\tilde x,\tilde y)=\tilde P(\tilde x',\tilde y')\}
% \lesssim \delta^{-3\alpha+\eps^\prime}. 
% \end{equation*}
% By Cauchy-Schwarz,
% \begin{equation*}
% \# \tilde P(\tilde A,\tilde B) \gtrsim \delta^{-\alpha-\eps^\prime},
% \end{equation*}
%
% TODO: removed
%
% and thus
% \begin{equation*}
% \mathcal{E}_{\delta}(P(A,B))\gtrsim \delta^{-\alpha-\eps^\prime+w}\gtrsim \delta^{-\alpha-\eps^\prime+4\eta C/\kappa^2}.
% \end{equation*}
To complete the proof, select $\eta=\eta(\alpha,\kappa,C)$ sufficiently small so that $\eps^\prime\geq 18\eta C/\kappa^2$, and select $\eps = \eps^\prime/2$.
\end{proof}

%%%%
%%%%Thm 1.9 from edit
%%%
%%%

\section{Proof of Theorem \ref{main-energy-dispersion}} \label{mainThmProofsSectionTwo}
For the reader's convenience we will restate Theorem \ref{main-energy-dispersion} here.

\begin{main-energy-dispersionThm}
Let $0< \kappa\leq \alpha< 1$, let $U\subset\RR^2$ be a connected open set that contains $[0,1]^2$, and let $P\colon U \to\RR$ be analytic (resp.~polynomial). Then either $P$ is an analytic (resp.~polynomial) special form, or there exists $\eps = \eps(\alpha,\kappa,P)>0$ and $\eta = \eta(\alpha,\kappa,P)>0$ so that the following is true for all $\delta>0$ sufficiently small.

Let $A,B\subset[0,1]$ and suppose that for all intervals $J$ of length at least $\delta$, $A$ and $B$ satisfy the non-concentration condition
\begin{equation*}\tag{\ref{nonConcentrationCond}}
\begin{split}
\mathcal{E}_{\delta}(A \cap J) &\le  |J|^\kappa\delta^{-\alpha-\eta},\\
\mathcal{E}_{\delta}(B \cap J) &\le  |J|^\kappa\delta^{-\alpha-\eta}.
\end{split}
\end{equation*}
Then we have the energy dispersion estimate
\begin{equation*}\tag{\ref{energyDispersionEqn}}
\mathcal{E}_{\delta}\big(\{(x,x',y,y')\in A^2\times B^2 \colon P(x,y) = P(x^\prime,y^\prime) \}\big)\leq \delta^{-3\alpha+\eps}.
\end{equation*}
If $P$ is a polynomial, then the quantities $\eps$ and $\eta$ can be taken to depend only on $\alpha,\kappa$, and $\deg P$.
\end{main-energy-dispersionThm}
\begin{proof}
The idea of the proof is as follows: If $\partial_x P$, $\partial_y P,$ $\partial_{xy}P$, or $K_P$ vanishes identically, then $P$ is a special form and we are done. Next suppose this does not occur. The subset of $A\times B$ where the quantities $\partial_x P$, $\partial_y P,$ $\partial_{xy}P$, and $K_P$ are small has small $\delta$-covering number. We will call this the ``bad'' region, and the remainder of $A\times B$ we will call the ``good'' region. Since the bad region has small $\delta$-covering number, we will be able to prove that there are few quadruples $(x,x',y,y')$ contributing to \eqref{energyDispersionEqn} for which $(x,y)$ or $(x',y')$ is in the bad region. It remains to count the number of quadruples contributing to \eqref{energyDispersionEqn} for which both $(x,y)$ and $(x',y')$ are in the good region. Our argument here will be similar to the proof of Theorem~\ref{main-entropy-growth}. 
The good region can be efficiently covered by a small number of squares. We can use Lemma~\ref{countingNumberQuadruplesNablePWedgeP} to bound the number of quadruples $(x,x',y,y')$ contributing to \eqref{energyDispersionEqn} where $(x,y)$ and $(x',y')$ are contained in (possibly different) squares inside the good region. Summing this bound over all pairs of good squares completes the argument. We now turn to the details.

To begin, we can suppose that $A$ and $B$ are unions of intervals of the form $[n\delta, (n+1)\delta)$ with $n\in\ZZ$. Indeed, we can replace $A$ (resp. $B$) by the union of all intervals of this form that intersect $A$. Doing so weakens Inequality \eqref{nonConcentrationCond} by a harmless multiplicative factor, and does not decrease the LHS of \eqref{energyDispersionEqn}. 

Let $P\colon U\to\RR$ be analytic (resp.~polynomial). If $\partial_x P$, $\partial_y P,$ or $\partial_{xy}P$ vanish identically on $U$ then $P$ is an analytic (resp.~polynomial) special form and we are done. If $K_P$ vanishes identically on $U$, then by Lemma \ref{whenIsPSpecialFormProp}, $P$ is an analytic (resp.~polynomial) special form and we are done.

Next, suppose that none of these quantities vanish identically. Let $C$ be the constant obtained by applying Lemma \ref{cubesWhereAnalyticFLarge} to the functions $\partial_x P$, $\partial_y P,$ $\partial_{xy}P$, and $K_P$. (Recall that in general $C$ depends on $P$, but if $P$ is a polynomial then $C$ depends only on $\deg P$.) We will apply Lemma~\ref{cubesWhereAnalyticFLarge} with a choice of $w=w(\alpha,\kappa)>0$ to be chosen later, and let $\mathcal{Q}$ be the resulting collection of cubes. We have
\begin{equation}\label{numberOfCubes}
\#\mathcal{Q} \lesssim \delta^{-4w/\kappa},
\end{equation}
\begin{equation}
|\partial_{xy}P|,\ |K_P|\ \geq \delta^w\quad\textrm{on each cube}\ Q\in\mathcal{Q},
\end{equation}
and for each cube $Q\in\mathcal{Q}$, there are numbers $v_{Q,x},v_{Q,y}\geq \delta^w$ so that
\begin{equation}\label{sizeOfP1212Nabla}
\begin{split}
&v_{Q,x}\leq |\partial_x P|< 4v_{Q,x},\\
&v_{Q,y}\leq |\partial_y P|< 4v_{Q,y}
\end{split}
\end{equation}
on $Q$. Finally, we have 
\begin{equation*}
\mathcal{E}_{\delta}\Big( (A\times B)\ \backslash \bigcup_{Q\in\mathcal{Q}}Q\Big)\lesssim \delta^{-2\alpha + \frac{w\kappa}{C}-3\eta}.
\end{equation*}

Let $W = [0,1]^2 \backslash \bigcup_{Q\in\mathcal{Q}}Q$. The following claim allows us to control the number of quadruples $P(x,y)=P(x',y')$ where either $(x,y)$ or $(x',y')$ is contained in $W$. 
\begin{clm}
If $\delta>0$ is chosen sufficiently small (depending $P$), then
\begin{equation}\label{quadruplesNotCoveredBySquares}
\begin{split}
\mathcal{E}_{\delta}\Big((x,x',y,y')\colon (x,y)\in W\cap (A\times B)\ \textrm{or}\ (x',y')\in W\cap (A\times B),\ & P(x,y) = P(x',y')\Big)\\
&\lesssim \delta^{-3\alpha+\frac{w\kappa^2}{2C^2}-2\eta }.
\end{split}
\end{equation}
\end{clm}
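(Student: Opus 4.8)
The plan is to exploit the symmetry of the relation $P(x,y)=P(x',y')$, reduce to counting quadruples whose first point lies in the bad set $W\cap(A\times B)$, and then organize these quadruples according to the value $P(x,y)$, which (since $|\nabla P|$ is bounded on $[0,1]^2$) lies in a single $\delta$-interval up to a $P$-dependent constant. Concretely: since the equation is symmetric under $(x,y)\leftrightarrow(x',y')$, the quantity in \eqref{quadruplesNotCoveredBySquares} is at most twice $\mathcal{E}_\delta(T_W)$ with $T_W:=\{(x,x',y,y')\in A^2\times B^2:\ (x,y)\in W\cap(A\times B),\ P(x,y)=P(x',y')\}$. Tiling $\RR$ by $\delta$-intervals $V_k$, a $\delta$-cell of $T_W$ has $P(x,y),P(x',y')$ in a common dilate $C_0V_k$ ($C_0=C_0(P)$), so
\[
\mathcal{E}_\delta(T_W)\ \lesssim_P\ \sum_k \mu_k\,\nu_k,\qquad \mu_k:=\mathcal{E}_\delta\!\big(W\cap(A\times B)\cap P^{-1}(C_0V_k)\big),\quad \nu_k:=\mathcal{E}_\delta\!\big((A\times B)\cap P^{-1}(C_0V_k)\big).
\]
Because a $\delta$-cell of $A\times B$ contributes to $O_P(1)$ of the $\mu_k$ (and of the $\nu_k$), one has $\sum_k\mu_k\lesssim_P\mathcal{E}_\delta\big(W\cap(A\times B)\big)$, $\sum_k\nu_k\lesssim_P\mathcal{E}_\delta(A\times B)\le\delta^{-2\alpha-2\eta}$; and by \eqref{cubeHasLargeVolume} applied to $\partial_xP,\partial_yP,\partial_{xy}P,K_P$ (this is the collection $\mathcal{Q}$ with $W=[0,1]^2\setminus\bigcup_{Q\in\mathcal{Q}}Q$ from the passage above the Claim) we get $\sum_k\mu_k\lesssim\delta^{-2\alpha+\frac{w\kappa}{C}-3\eta}$.

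Next I split the sum according to whether $C_0V_k$ meets a fixed ($P$-independent) compact neighborhood $T^\ast$ of the finitely many critical values of $P$ on $[0,1]^2$; the critical set $\{\nabla P=0\}\cap[0,1]^2=Z\big((\partial_xP)^2+(\partial_yP)^2\big)\cap[0,1]^2$ has finitely many components by Theorem~\ref{stratificationRealAnalSet}. For a \emph{regular} $V_k$ (i.e.\ $C_0V_k\cap T^\ast=\emptyset$) one has $|\nabla P|\gtrsim_P 1$ on $P^{-1}(C_0V_k)$, so $P^{-1}(C_0V_k)$ lies in an $O_P(\delta)$-neighborhood of the curve $P^{-1}(k\delta)$; this curve decomposes into $O_P(1)$ arcs of slope at most $2$, where the crucial point is that the number of arcs is bounded \emph{uniformly in $k$} --- which follows from definability of the family $\{P^{-1}(t)\}$ in the o-minimal structure $\RR_{\mathrm{an}}$ (or, for polynomial $P$, from a B\'ezout bound). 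Intersecting each arc with $A\times B$ and using \eqref{nonConcentrationCond} gives $\nu_k\lesssim_P\mathcal{E}_\delta(A)+\mathcal{E}_\delta(B)\lesssim\delta^{-\alpha-\eta}$ for regular $k$, hence
\[
\sum_{k\ \mathrm{regular}}\mu_k\nu_k\ \lesssim_P\ \delta^{-\alpha-\eta}\sum_k\mu_k\ \lesssim\ \delta^{-3\alpha+\frac{w\kappa}{C}-4\eta}\ \le\ \delta^{-3\alpha+\frac{w\kappa^2}{2C^2}-2\eta},
\]
the last inequality holding once $\eta$ is small enough that $\tfrac{w\kappa}{C}\big(1-\tfrac{\kappa}{2C}\big)\ge 2\eta$.

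The remaining sum, over $k$ with $C_0V_k\subset T^\ast$, is the main obstacle. Here one works locally near each component $Z_0$ of the critical set, using the normal form of $P$ there (Morse/Morse--Bott, after possibly resolving a degenerate point via \L ojasiewicz's inequality). Two effects combine. First, $W$ is contained in a $\delta^{w/C}$-neighborhood of $Z(\partial_xP)\cup Z(\partial_yP)$, and these curves meet each level set $P^{-1}(t)$ transversally near $Z_0$ \emph{except} where the Hessian of $P$ is definite; so at saddle-type (and degenerate non-definite) points $W\cap P^{-1}(C_0V_k)$ is confined to $O_P(1)$ regions of one dimension $\lesssim\delta^{w/C}$, forcing $\mu_k$ to be much smaller than $\delta^{-2\alpha}$, and the estimate closes as in the regular case. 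Second, at a point where the Hessian is definite there is no such gain for $\mu_k$ --- the relevant region is a disk of radius $\sim\sqrt\delta$ lying entirely inside $W$ --- but on that disk $P=m+Q+O(\delta^{3/2})$ for a definite quadratic form $Q$, and a Gauss--circle count shows that the equation $P(x,y)=P(x',y')$ cuts the $\le\big(\mathcal E_\delta(A\cap J)\,\mathcal E_\delta(B\cap J)\big)^2$ quadruples in the disk down by a factor $\lesssim\sqrt\delta$; together with the non-concentration bound and the trivial bound $\mathcal E_\delta(A\cap J)\le\delta^{-1/2}$ (valid as $|J|\sim\sqrt\delta$) this yields a contribution $\lesssim\delta^{-3\alpha+g}$ with $g=g(\alpha,\kappa)>0$. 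Since $\sum_{k:\,C_0V_k\subset T^\ast}\mu_k\le\mathcal E_\delta\big(W\cap(A\times B)\big)$ absorbs the $\sim\delta^{-1}$ critical value-cells, these local estimates give, after choosing $w=w(\alpha,\kappa)$ small enough that $\tfrac{w\kappa^2}{2C^2}\le g$ (and all the other implicit small exponents above) and then $\eta$ small enough, a total critical-value contribution $\lesssim\delta^{-3\alpha+\frac{w\kappa^2}{2C^2}-2\eta}$; adding this to the regular-value bound proves the Claim. The hard part of the argument is precisely the analysis of these $\sqrt\delta$-disks about the definite critical points of $P$, where neither the bad-set estimate nor the level-set fibering estimate alone suffices and one must use the algebraic structure of the equation $P(x,y)=P(x',y')$ itself; this is also where the freedom to shrink $w$ (ultimately, the $P$-dependence of $\eps$ in Theorem~\ref{main-energy-dispersion}) is genuinely needed.
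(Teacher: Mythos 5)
Your reduction to $\sum_k \mu_k\nu_k$ and your treatment of the regular levels are sound and run parallel to half of the paper's argument, but the part you yourself flag as ``the main obstacle'' --- the levels $k$ for which $C_0V_k$ lies near a critical value of $P$ --- is where the proposal has a genuine gap, and the mechanism you propose there is not the one that makes the claim true. Two concrete problems. First, for such $k$ the set $P^{-1}(C_0V_k)$ is not localized near the critical set (for a saddle, the preimage of a small value-interval about the critical value is a neighborhood of two crossing curves running across the whole square), so ``working locally near each component of the critical set'' does not account for all of $\nu_k$. Second, for the piece that \emph{is} a $\sqrt\delta$-disk about a definite critical point, the claimed $\sqrt\delta$ gain from a ``Gauss--circle count'' is unsubstantiated: it amounts to asserting that $(A\times B)$ restricted to that disk does not concentrate on a single level-annulus, which is itself a non-concentration statement requiring proof, and the hypotheses \eqref{nonConcentrationCond} only control concentration relative to all of $A$ and $B$, not relative to their traces on a $\sqrt\delta$-interval. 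The Morse/Morse--Bott normal form also does not rigorously cover degenerate critical points of a general analytic $P$; ``resolving via \L{}ojasiewicz'' is not an argument.

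More importantly, the difficulty you are fighting is an artifact of fibering by the \emph{thickened} preimages $P^{-1}(C_0V_k)$, and the paper's proof never encounters it. The paper classifies pairs of $\delta$-squares $(S,S')$ by whether $|\nabla P|<\delta^{w_1}$ somewhere on $S$ (proximity to the critical \emph{set}, not the critical values; the number of such squares is controlled via Theorem \ref{Lojasiewicz} and Corollary \ref{coveringNbhAnalyticSet}). Then, for a fixed $S$, it uses the intermediate value theorem to show that any partner $S'$ with $S'_{\operatorname{var}}\geq S_{\operatorname{var}}/2$ must contain a point lying on one of at most three \emph{exact} level curves $\{P=z_i\}$ with $z_i\in[S_{\min},S_{\max}]$; an exact level curve is one-dimensional no matter how close $z_i$ is to a critical value, so it meets $\lesssim\delta^{-\alpha-\eta}$ squares of $A\times B$ by Corollary \ref{coveringNbhAnalyticSet}, and the case $S'_{\operatorname{var}}<S_{\operatorname{var}}/2$ is disposed of by a symmetrization. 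This is precisely how the definite-critical-point disks are handled: each bad square there still has only $\lesssim\delta^{-\alpha-\eta}$ partners, because a partner must sit on a specific circle, not merely somewhere in the disk. To salvage your decomposition you would need to pass from the thickened preimages to exact level curves via the intermediate value theorem and prove a covering bound for those curves that is uniform in the level (in the spirit of Lemma \ref{uniformBdPhiT}); as written, the critical-level case is not proved.
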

\begin{proof}
First, since $A$ and $B$ are unions of intervals of the form $[n\delta, (n+1)\delta)$, the estimate \eqref{quadruplesNotCoveredBySquares} is equivalent to counting the number of pairs of squares
\begin{equation}\label{pairsOfSquares}
\{ (S, S') \colon \exists\ (x,y)\in S,\ (x', y')\in S'\ \textrm{s.t.}\ (x,y)\in W\ \textrm{or}\ (x',y')\in W,\ P(x,y) = P(x',y')\},
\end{equation}
where $S$ and $S'$ are squares of the form $[n\delta, (n+1)\delta)\times [m\delta, (m+1)\delta)$ that are contained in $A\times B$. For each square $S$ of the above type, define
\begin{equation*}
S_{\max} = \sup_{(x,y)\in S}P(x,y),\quad S_{\min} = \inf_{(x,y)\in S}P(x,y),\quad S_{\operatorname{var}} = S_{\max}-S_{\min}.
\end{equation*}
Since $P$ is continuous and $S$ is connected, by the intermediate value theorem we have that
\begin{equation*}
(S_{\min},S_{\max}) \subset \{P(x,y)\colon (x,y)\in S\}\subset [S_{\min},S_{\max}].
\end{equation*}
Note that by the mean value theorem, there is a point $(x,y)\in S$ so that 
\begin{equation*}
|\nabla P(x,y)| \geq \frac{S_{\operatorname{var}}}{\operatorname{diam}(S)} = (\sqrt 2 \delta)^{-1} S_{\operatorname{var}}.
\end{equation*}
Note as well that since $|\nabla^2 P|$ is bounded on $[0,1]^2$, we have that $|\nabla P|$ can only change by $\lesssim \delta$ on $S$. %, where the implicit constant depends on $P$. 
Informally, this means that if $|\nabla P|$ is small somewhere on $S$, then it is small everywhere on $S$, and $S_{\operatorname{var}}$ must be small. 

Next, let $w_1>0$ be a small parameter that we will specify below. We will count the number of pairs of squares $(S,S')$ that satisfy:
\begin{itemize}
\item $P(x,y) = P(x',y')$ for some $(x,y)\in S,\ (x',y')\in S'$.
\item $|\nabla P(x,y)|<\delta^{w_1}$ for some $(x,y)\in S$.
\item $S_{\operatorname{var}}<2 S'_{\operatorname{var}}$.
\end{itemize}
Note that if $|\nabla P(x,y)|<\delta^{w_1}$ for some $(x,y)\in S$, then provided $\delta$ is chosen sufficiently small (depending on $P$), we have $|\nabla P|\leq (1.1)\delta^{w_1}$ everywhere on $S$ (the constant $1.1$ is not important; the statement is true for any constant larger than 1). 

To count the number of pairs $(S,S')$ satisfying the three properties listed above, we begin by observing that by Theorem \ref{Lojasiewicz} and Corollary \ref{coveringNbhAnalyticSet}, %Lemma \ref{neighborhoodAnalyticVariety}, 
after possibly increasing the constant $C$, there are $\lesssim \delta^{-2\alpha + \frac{w_1\kappa}{C}-3\eta}$ choices for $S$. Fix such a square $S$ and define the curves
\begin{equation*}
\begin{split}
\alpha_1(S) &= \{ (x',y')\in [0,1]^2\colon P(x',y') = S_{\min} \},\\
\alpha_2(S) &= \{ (x',y')\in [0,1]^2\colon P(x',y') = (S_{\min}+S_{\max})/2 \},\\
\alpha_3(S) &= \{ (x',y')\in [0,1]^2\colon P(x',y') = S_{\max} \}.
\end{split}
\end{equation*}
Next, let $S'$ be a square so that $P(x,y) = P(x',y')$ for some $(x,y)\in S,\ (x',y')\in S'$ and $S_{\operatorname{var}}< 2 S'_{\operatorname{var}}$. Then
\begin{equation*}
S_{\min}\leq P(x,y) = P(x', y') \leq S_{\max},
\end{equation*}
and the interval $(S'_{\min}, S'_{\max})$ has length greater than $S_{\operatorname{var}}/2$. Thus $(S'_{\min}, S'_{\max})$ must contain at least one of the points $S_{\min}$, $(S_{\min}+S_{\max})/2$, or $S_{\max}$, and this implies that $S'$ must intersect at least one of the three curves $\alpha_1(S),\alpha_2(S),$ or $\alpha_3(S)$. By Corollary \ref{coveringNbhAnalyticSet} (with $s=\delta$), each of these three curves intersect $\lesssim\delta^{-\alpha-\eta}$ cubes from $A\times B$; we conclude that the number of pairs $(S,S')$ satisfying the above requirements is $\lesssim \delta^{-3\alpha + \frac{w_1\kappa}{C}-4\eta}$. An identical argument allows us to bound the number of cubes $(S,S')$ satisfying the three properties listed above, with the roles of $S$ and $S'$ reversed. 

Finally, we will remove the requirement that $S_{\operatorname{var}}<2 S'_{\operatorname{var}}$. Suppose that the pair of squares $(S,S')$ satisfies the first two items listed above, but that $S_{\operatorname{var}}\geq 2 S'_{\operatorname{var}}$, then $|\nabla P|<(1.1)\delta^{w_1}$ everywhere on $S$, and this implies $S_{\operatorname{var}}\leq (1.1)\sqrt 2\delta^{1+w_1}<2\delta^{1+w_1}$, and hence $S'_{\operatorname{var}}< \delta^{1+w_1}$. But this means that the pair $(S',S)$ satisfies all three of the items listed above, and our  previous argument has already bounded the number of pairs of this form. 

To summarize,
\begin{equation}
\begin{split}
\# \{ (S,S') \colon&\ P(x,y) = P(x',y')\ \textrm{for some}\ (x,y)\in S,\ (x',y')\in S',\\
& |\nabla P(x,y)|\leq \delta^{w_1}\ \textrm{for some}\ (x,y)\in S\ \textrm{or}\ |\nabla P(x',y')|\leq \delta^{w_1}\ \textrm{for some}\ (x',y')\in S' \} \\
& \lesssim \delta^{-3\alpha + \frac{w_1\kappa}{C}-4\eta}.
\end{split}
\end{equation}

Next we will consider pairs $(S,S')$ in \eqref{pairsOfSquares} where $|\nabla P|\geq\delta^{w_1}$ on $S$ and on $S'$. 
As discussed above, assuming that $S\cap W\neq\emptyset$, there are $\lesssim  \delta^{-2\alpha + \frac{w\kappa}{C}-3\eta}$ choices for $S$. Fix such a square $S$, and let $z_1(S),\ldots z_N(S)$ be evenly spaced points in $[S_{\min},S_{\max}]$, with $z_1(S) = S_{\min}$ and $z_N(S) = S_{\max}$. We will choose $N = K\delta^{-w_1}$, where $K$ is a constant that depends only on $P$. For each index $i=1,\ldots,N$, define the curve
\begin{equation*}
\alpha_i(S) = \{(x',y')\in [0,1]^2 \colon P(x',y') = z_i(S)\}.
\end{equation*}
Since $|\nabla P|\lesssim 1$ on $[0,1]^2$, we have $S_{\operatorname{var}}\lesssim \delta$, and thus every number between $S_{\min}$ and $S_{\max}$ must be $\lesssim N^{-1}\delta= K^{-1}\delta^{1+w_1}$ close to some $z_i(S)$. 
Let $S'$ be a square so that $P(x,y) = P(x',y')$ for some $(x,y)\in S$ and some $(x',y')\in S^\prime$. Since $|\nabla P|\geq\delta^{w_1}$ on $S'$ we must have $S'_{\operatorname{var}}\gtrsim \delta^{1+w_1}$. 
In particular, if $K$ is chosen sufficiently large (depending on $P$), then the interval $(S'_{\min},S'_{\max})$ must contain one of the points $z_i(S)$, and thus the square $S'$ must intersect at least one of the curves $\alpha_i(S)$. By Corollary \ref{coveringNbhAnalyticSet}, each curve $\alpha_i(S)$ can intersect $\lesssim \delta^{-\alpha-\eta}$ cubes $S'$; we conclude that there are $\lesssim \delta^{-3\alpha + \frac{w\kappa}{C} - w_1-4\eta}$ pairs $(S,S')$ in \eqref{pairsOfSquares} where $|\nabla P|\geq\delta^{w_1}$ on $S$ and on $S'$. 

In summary,
\begin{equation}
\#\eqref{pairsOfSquares} \lesssim \delta^{-3\alpha}(\delta^{\frac{w_1\kappa}{C}-4\eta} + \delta^{\frac{w\kappa}{C} - w_1- 4\eta}). %\le \delta^{-3\alpha-4\eta}.
\end{equation}
To finish, we select $w_1=\frac{w\kappa}{2C}.$
\end{proof}

We are now ready to continue with the proof of Theorem \ref{main-energy-dispersion}. Let $\eps^\prime = \eps^\prime(\alpha,\kappa)>0$ be the constant from Lemma \ref{countingNumberQuadruplesNablePWedgeP}. If $w$ and $\eta$ are selected sufficiently small (depending on $\alpha,$ $\kappa,$ and $C$), then the hypotheses of Lemma \ref{countingNumberQuadruplesNablePWedgeP} are met for each pair of cubes $Q,Q^\prime\in\mathcal{Q}$. We conclude that
\begin{equation}\label{coveringQuadruplesInOnePairOfCubes}
\mathcal{E}_{\delta}\big(\{ (x,x',y,y')\colon (x,y)\in Q,\ (x',y')\in Q',\  P(x,y) = P(x',y')\}\big)\lesssim \delta^{-3\alpha+\eps^\prime}.
\end{equation} 
Combining \eqref{numberOfCubes} and \eqref{coveringQuadruplesInOnePairOfCubes}, and noting that there are at most  $(\#\mathcal{Q})^2$ pairs of cubes, we have
\begin{equation}\label{quadruplesCoveredBySquares}
\mathcal{E}_{\delta}\Big((x,x',y,y')\colon (x,y),(x',y')\in (A\times B)\backslash W,\ P(x,y) = P(x',y')\Big)\lesssim\delta^{-3\alpha+\eps^\prime -4w/\kappa}.
\end{equation}

Combining \eqref{quadruplesNotCoveredBySquares} and \eqref{quadruplesCoveredBySquares}, we have
\begin{equation}
\mathcal{E}_{\delta}\Big((x,x',y,y')\in A^2\times B^2\colon P(x,y) = P(x',y')\Big)\lesssim \delta^{-3\alpha}(\delta^{\frac{w\kappa^2}{2C^2}-4\eta } + \delta^{\eps^\prime -4w/\kappa}).
\end{equation}
To finish, choose $w$ sufficiently small so that $\eps^\prime -4w/\kappa\geq \eps^\prime/2$. 
Choose $\eta$ sufficiently small so that $\frac{w\kappa^2}{2C^2}-4\eta\geq \frac{w\kappa^2}{4C^2}$. 
Finally, choose $\eps>0$ sufficiently small so that $\eps\leq \min(\eps^\prime/2, \frac{w\kappa^2}{4C^2})$.
\end{proof}

\section{Dimension expansion}\label{dimExpansionSec}
In this section we will prove Theorem \ref{dimension-expander}.
The basic idea is we select a square $Q$ so that the sets $A',B'$ defined by $A'\times B' = (A\times B)\cap Q$ have large dimension, and $\partial_xP,\ \partial_yP, \partial_{xy}P$, and $K_P$ are large on $Q$. We then discretize and apply Lemma \ref{countingNumberQuadruplesNablePWedgeP}.

\subsection{Finding a good square}
\begin{defn}
Let $A\subset\RR$, let $x\in\RR$, and let $\beta>0$. We say that $A$ has {\it local dimension} $\geq\beta$ at $x$ if $\dim(A\cap U)\geq\beta$ for every neighborhood $U$ of $x$. Otherwise we say $A$ has local dimension $<\beta$ at $x$. 
\end{defn}

\begin{lem}\label{localDimMaximal}
Let $A\subset\RR$. Then for each $\eps>0$, there is at least one point $x\in \RR$ where $A$ has local dimension $\geq\dim(A)-\eps$. 
\end{lem}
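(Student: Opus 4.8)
The plan is to argue by contradiction, exploiting the countable stability of Hausdorff dimension. Suppose, for the sake of contradiction, that there is an $\eps>0$ such that at \emph{every} point $x\in\RR$ the set $A$ has local dimension $<\dim(A)-\eps$. By definition, this means that for each $x\in\RR$ there is an open neighborhood $U_x$ of $x$ with $\dim(A\cap U_x)<\dim(A)-\eps$.

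The family $\{U_x\}_{x\in\RR}$ is an open cover of $\RR$, and $\RR$ is a Lindel\"of space (equivalently, second countable), so we may extract a countable subcover $U_{x_1}, U_{x_2},\ldots$ with $\RR=\bigcup_{n} U_{x_n}$. Then
\[
A = \bigcup_{n} (A\cap U_{x_n}),
\]
and by countable stability of Hausdorff dimension,
\[
\dim(A) = \sup_n \dim(A\cap U_{x_n}) \le \dim(A)-\eps,
\]
which is absurd. Hence our assumption was false, and there exists at least one point $x\in\RR$ at which $A$ has local dimension $\ge\dim(A)-\eps$.

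There is essentially no obstacle here; the only thing to be careful about is that the statement is vacuous (and the conclusion trivially true) when $\dim(A)=0$, and that the Lindel\"of property of $\RR$ is exactly what licenses passing from an arbitrary open cover to a countable one, after which the countable stability of Hausdorff dimension does the rest.
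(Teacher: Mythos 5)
Your proof is correct and is essentially identical to the paper's: both argue by contradiction, pass from the pointwise cover $\{U_x\}$ to a countable subcover of $\RR$, and conclude via the countable stability of Hausdorff dimension. No issues.
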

\begin{proof}
Suppose not. For each $x\in \RR$, let $U_x$ be a neighborhood of $x$ with $\dim(A\cap U_x)\leq \dim(A)-\eps$. Let $\mathcal{U}\subset \{U_x\colon x\in A\}$ be a countable sub-cover of $\RR$. Then $\dim(A) = \max_{U\in\mathcal{U}}\dim(A\cap U)\leq \dim(A)-\eps$, which is impossible.
\end{proof}

\begin{cor}\label{ManyPointsAchieveLocalDim}
Let $A\subset\RR$, let $\eps>0$, and let $B$ be the set of points $x\in\RR$ where $A$ has local dimension $\ge \dim(A)-\eps$. Then either $A\setminus B=\emptyset$ or $\dim A\setminus B < \dim A$. In particular, if $\dim A>0$ then 
there are infinitely many points $x\in\RR$ where $A$ has local dimension $\geq\dim(A)-\eps$. 
\end{cor}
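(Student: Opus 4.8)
The plan is to prove the stated dichotomy first, and then deduce the ``infinitely many points'' clause by a short counting argument using countable stability of Hausdorff dimension.

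For the dichotomy, suppose $A\setminus B\neq\emptyset$; I will show $\dim(A\setminus B)<\dim A$. By the definition of $B$, each $x\in A\setminus B$ fails to have local dimension $\geq\dim A-\eps$, so by the ``otherwise'' clause in the definition of local dimension there is an open neighbourhood $U_x\ni x$ with $\dim(A\cap U_x)<\dim A-\eps$. Since $\{U_x\colon x\in A\setminus B\}$ is an open cover of a subset of $\RR$, the Lindel\"of property of $\RR$ yields a countable subfamily $U_{x_1},U_{x_2},\ldots$ with $A\setminus B\subseteq\bigcup_i U_{x_i}$, hence $A\setminus B\subseteq\bigcup_i(A\cap U_{x_i})$. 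Countable stability of Hausdorff dimension then gives
\[
\dim(A\setminus B)\leq \sup_i \dim(A\cap U_{x_i})\leq \dim A-\eps<\dim A.
\]
This is exactly the argument used to prove Lemma~\ref{localDimMaximal}, localized from $\RR$ to the set $A\setminus B$.

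For the ``in particular'' clause, assume $\dim A>0$ and suppose, toward a contradiction, that $B$ is finite, so that $\dim B=0$. By the dichotomy just proved, either $A\setminus B=\emptyset$ or $\dim(A\setminus B)<\dim A$. In the first case $A\subseteq B$ is finite, so $\dim A=0$, contradicting $\dim A>0$. In the second case, writing $A=(A\setminus B)\cup(A\cap B)$ and using monotonicity and finite stability of Hausdorff dimension,
\[
\dim A=\max\big(\dim(A\setminus B),\ \dim(A\cap B)\big)\leq\max\big(\dim(A\setminus B),\ 0\big)<\dim A,
\]
again a contradiction. Hence $B$ is infinite. (Nonemptiness of $B$ is already supplied by Lemma~\ref{localDimMaximal}, though the argument above does not require it as input.)

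I do not expect a genuine obstacle here: the entire proof reduces to invoking two standard facts about $\RR$ and Hausdorff dimension — the Lindel\"of property (to extract a countable subcover) and countable stability of $\dim$ — together with a careful reading of the definition of local dimension so as to produce the neighbourhoods $U_x$. The only thing to watch is that the inequality $\dim(A\cap U_x)<\dim A-\eps$ is strict for each $x$, which is all that is needed since in the supremum it still yields the bound $\dim A-\eps$.
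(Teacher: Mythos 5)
Your proof is correct, and it is exactly the intended argument: the paper leaves Corollary \ref{ManyPointsAchieveLocalDim} without an explicit proof, and your localization of the countable-subcover plus countable-stability argument from Lemma \ref{localDimMaximal} to the set $A\setminus B$, followed by the finite-stability contradiction for the ``infinitely many points'' clause, is precisely how it is meant to follow.
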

% Corollary~\ref{ManyPointsAchieveLocalDim} is sharp, in the sense that there might not exist infinitely many points $x\in A$ so that $A$ has local dimension $\geq\dim(A)$ at $x$. The example below illustrates this situation.
% \begin{example}
% Let $\{C_n\}$ be a sequence of closed subsets of $[0,1]$ with $\dim(C_n) = \alpha(1-1/n)$, and let $A = \{0\} \cup \bigcup_{n=1}^\infty \frac{1}{n}C_n$. It is easy to verify that $A$ is closed. Then $\dim(A)=\alpha,$ and for every point $x\in\RR\backslash \{0\}$ we have that $A$ has local dimension at most $\alpha(1-1/n)$ at $x$, where $n=\lceil 1/x\rceil$.
% \end{example}

\begin{lem}\label{findGoodPoint}
Let $f$ be a function that is analytic and not identically zero on an open neighborhood of $[0,1]^d$, let $\eps>0$, and let $A_1,\ldots,A_d\subset [0,1]$ have positive Hausdorff dimension. Then there is a point $p=(p_1,\ldots,p_d)\in (0,1)^d\backslash Z(f)$ so that for each index $i$, $A_i$ has local dimension $\geq\dim(A_i)-\eps$ at $p_i$.
\end{lem}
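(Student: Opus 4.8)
The plan is to deduce this from Corollary~\ref{ManyPointsAchieveLocalDim} together with the identity principle for real analytic functions, handling one coordinate at a time. First I would reduce to the case $\eps < \min_{1\le i\le d}\dim(A_i)$, since the conclusion only becomes stronger as $\eps$ shrinks. For each index $i$ let $B_i\subset\RR$ denote the set of points at which $A_i$ has local dimension $\ge \dim(A_i)-\eps$; since $\dim(A_i)>0$, Corollary~\ref{ManyPointsAchieveLocalDim} shows that $B_i$ is infinite. Because $A_i\subset[0,1]$ and $\eps<\dim(A_i)$, any point lying outside $[0,1]$ has a neighborhood disjoint from $A_i$ and hence does not belong to $B_i$; consequently $S_i:=B_i\cap(0,1)$ is still infinite, as it differs from $B_i$ by at most the two endpoints. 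It then suffices to find a point $p\in S_1\times\cdots\times S_d$ with $f(p)\neq 0$, since such a $p$ automatically lies in $(0,1)^d\setminus Z(f)$ and each coordinate $p_i\in B_i$ has the required local-dimension property.

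To produce such a $p$ I would prove, by induction on $d$, the following slightly more general statement: if $g$ is real analytic and not identically zero on a connected open neighborhood of $[0,1]^d$ and $T_1,\dots,T_d\subset(0,1)$ are infinite, then $g(q)\neq 0$ for some $q\in T_1\times\cdots\times T_d$. The case $d=1$ is immediate, since the zero set of a nonzero one-variable real analytic function is discrete and hence finite on the compact interval $[0,1]$, so it cannot contain the infinite set $T_1$. For $d\ge 2$, for each $t\in T_d$ consider the slice $g_t(x_1,\dots,x_{d-1})=g(x_1,\dots,x_{d-1},t)$, which is real analytic on a neighborhood of $[0,1]^{d-1}$. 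If some slice $g_{t_0}$ is not identically zero near $[0,1]^{d-1}$, apply the inductive hypothesis to $g_{t_0}$ and $T_1,\dots,T_{d-1}$ and append $q_d=t_0$. Otherwise every slice $g_t$ with $t\in T_d$ vanishes near $[0,1]^{d-1}$, and then for each fixed $(x_1,\dots,x_{d-1})$ near $[0,1]^{d-1}$ the one-variable analytic function $t\mapsto g(x_1,\dots,x_{d-1},t)$ vanishes on the infinite set $T_d$, which has an accumulation point in $[0,1]$, an interior point of the domain of analyticity; hence this function vanishes identically, so $g$ vanishes on a neighborhood of $[0,1]^d$ and therefore on its whole connected domain, contradicting the hypothesis.

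I do not expect a serious obstacle here, as the argument is routine; the two points that require a little care are (i) ensuring that $S_i$ remains infinite after intersecting with the open interval $(0,1)$, which is the reason one first shrinks $\eps$, and (ii) in the ``all slices vanish'' case, verifying that the identity principle genuinely forces $g\equiv 0$ — this uses that, by compactness, every open neighborhood of $[0,1]$ contains a connected open interval around $[0,1]$ on which the relevant one-variable restrictions are analytic, together with the standard fact that a real analytic function vanishing on a nonempty open subset of a connected domain is identically zero.
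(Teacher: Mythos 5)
Your proof is correct and follows essentially the same route as the paper: induction on $d$, using Corollary~\ref{ManyPointsAchieveLocalDim} to get infinitely many points of near-maximal local dimension in each coordinate, and the one-variable identity principle to rule out all slices lying in $Z(f)$. Your reformulation via arbitrary infinite sets $T_i$ and the explicit handling of the ``all slices vanish'' case is just a more carefully written version of the same argument.
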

\begin{proof}
We will prove the result by induction on $d$. When $d=1$ the result follows from Corollary~\ref{ManyPointsAchieveLocalDim} and the fact that since $f$ is analytic and not identically zero on a neighborhood of $[0,1]$, $Z(f)\cap [0,1]$ is finite. Now suppose the result is true for $d-1$ and let $f,$ $\eps$, and $A_1,\ldots,A_d$ be as in the statement of the lemma. Since $f$ is analytic and not identically zero on a neighborhood of $[0,1]^d$, there are finitely many points $p_d\in [0,1]$ so that the (truncated) hyperplane $\{x_d=p_d\}\cap [0,1]^d$ is contained in $Z(f) \cap [0,1]^d$. Use Corollary \ref{ManyPointsAchieveLocalDim} to select a point $p_d\in (0,1)$ so that $A_d$ has local dimension $\geq\dim(A_d)-\eps$ at $p_d$, and the truncated hyperplane $\{x_d=p_d\}\cap [0,1]^d$ is not contained in $Z(f)\cap [0,1]^d$. Then the function $(x_1,\ldots,x_{d-1})\mapsto f(x_1,\ldots,x_{d-1},p_d)$ is analytic and not identically zero on a neighborhood of $[0,1]^{d-1}$. We now find the point $p=(p_1,\ldots,p_{d-1},p_d)$ by applying the induction hypothesis to this function and the sets $A_1,\ldots,A_{d-1}$.
\end{proof}

We are now ready to prove Theorem \ref{dimension-expander}. For the reader's convenience we will reproduce it here.
\begin{dimension-expanderThm}
Let  $U\subset\RR^2$  be a connected open set that contains $[0,1]^2$ and let $P\colon U \to\RR$ be analytic (resp.~polynomial). Then either $P$ is an analytic (resp.~polynomial) special form, or for every $0< \alpha< 1$, there exists $\eps=\eps(\alpha)>0$ so that the following holds. Let $A,B\subset [0,1]$ be Borel sets with Hausdorff dimension at least $\alpha$. Then 
\begin{equation*}\tag{\ref{dimensionGrowth}}
\dim P(A,B) \ge \alpha+\eps.
\end{equation*}  
\end{dimension-expanderThm}
\begin{proof}
Let $\eps=\eps(\alpha)$ and $\delta_0=\delta_0(\alpha,P)$ be the quantities obtained by applying Lemma \ref{countingNumberQuadruplesNablePWedgeP} to $P$ with parameters $\alpha$ and $\kappa = \alpha/2$. We will prove that 
\begin{equation}\label{deimPAB}
\dim(P(A,B))>\alpha+\eps/2.
\end{equation} 

Suppose that $P$ is not a special form and \eqref{deimPAB} is false. Without loss of generality we can suppose that $P(A,B)\subset [0,1]$. Define 
\begin{equation*}
f= (\partial_x P)(\partial_y P)(\partial_{xy}P)^2(K_P).
\end{equation*}
By Lemma~\ref{whenIsPSpecialFormProp}, $f$ does not vanish identically on $[0,1]^2$. Note that initially $f$ is not defined on $Z(\partial_{xy}P)$, but if we define $f$ to be $0$ on this set, then $f$ is analytic on a neighborhood of $[0,1]^2$, so we can use Lemma \ref{findGoodPoint} to select a point $p=(p_1,p_2) \in (0,1)^2\backslash Z(f)$ so that $A$ and $B$ have local dimension $\geq \alpha-\eps/20$ at $p_1$ and $p_2$, respectively. Let $I_0,J_0\subset [0,1]$ be closed intervals containing $p_1$ and $p_2$ respectively so that $(I_0\times J_0)\cap Z(f)=\emptyset$ and $p$ is contained in the interior of $I_0\times J_0$. Then there is a number $c>0$ so that 
\begin{equation}\label{lowerBoundKeyQuantities}
|\partial_x P(x,y)|\geq c,\ |\partial_y P(x,y)|\geq c,\ |\partial_{xy}P(x,y)|\geq c,\ |K_p(x,y)|\geq c\quad\forall\ (x,y)\in I_0\times J_0. 
\end{equation}
%
%\orit{$c$ depends on what?}
Define $A^\prime = A\cap I_0$ and $B^\prime = B\cap J_0$. Let $C = P(A',B')$.

Since $\dim(A')\geq \alpha-\eps/20$ and $\dim(B')\geq \alpha-\eps/20$, by Frostman's lemma (see e.g.~\cite[Theorem 8.8]{Mat}), there are Borel  measures $\mu$ and $\nu$, supported on $A'$ and $B'$ respectively, with $\mu(A')>0,$ and $\nu(B')>0$, so that
\begin{equation}\label{FrostmanConditionIneq}
\begin{split}
&\mu(A\cap I)\leq |I|^{\alpha-\eps/10}\ \textrm{for every interval}\ I,\\
&\nu(B\cap J)\leq |J|^{\alpha-\eps/10}\ \textrm{for every interval}\ J.
\end{split}
\end{equation}
In particular, $\mu$ and $\nu$ have no atoms. Let $\lambda$ be the pushforward of $\mu\times\nu$ by $P$, i.e. for each Borel set $U \subset \RR$, we have 
\begin{equation*}
\lambda(U) = (\mu\times\nu) (\{(x,y)\in A\times B: P(x,y) \in U\}).
\end{equation*}
Observe that $\lambda$ is a measure supported on $C$, and $\lambda(C)>0$. Let $\delta_1\leq\delta_0$ be a number of the form $2^{-k_1}$ for some positive integer $k_1$. In what follows, all implicit constants will be independent of $\delta_1$. If $\delta_1>0$ is selected sufficiently small (depending only on $\alpha$, 
then we will eventually show that \eqref{deimPAB} must hold.

Let $\{[x_i, x_i+r_i]\}$ be a covering of $C$ by intervals of length at most $\delta_1$, with
\begin{equation}\label{sumOfRi}
\sum_i r_i^{\alpha+(3/5)\eps}<1.
\end{equation}
Such a covering must exist, since by assumption we have $\dim(C)\leq \alpha+\eps/2$. Without loss of generality we can suppose that each interval is of the form $[n2^{-k},(n+1)2^{-k}]$ for some $k\geq k_1$ and some integer $1\leq n< 2^{-k_1}$. 

For each $k\geq k_1$, let $m_k = \sum \lambda([x_i,x_i+r_i])$, where the sum is taken over all intervals of length $r_i=2^{-k}$. Since $\sum_{k\geq k_1} m_k \geq \lambda(C)>0$ and $\sum_{k\geq k_1}k^{-2}\le \pi^2/6$, there must exist an index $k_2\geq k_1$ with $m_{k_2} \gtrsim \lambda(C)k_2^{-2}$. Define $\delta = 2^{-k_2}$, so $0<\delta\leq\delta_1$.

At this point it will be helpful to introduce some additional notation. We say $X\lessapprox Y$ if there is an absolute constant $K$ so that $A\leq K |\log\delta|^K B$. If $A\lessapprox B$ and $B\lessapprox A$, we say $A\approx B$. In the arguments that follow, we can always take $K\leq 100$. With this notation we have $1\lessapprox m_{k_2}$. 

Let $C^\prime=\bigcup [x_i,x_i+r_i]$, where the union is taken over all intervals of length $r_i=\delta$ in the covering. Then $\lambda(C^\prime)=m_{k_2}\gtrapprox 1$, and $C^\prime$ is a union of interior-disjoint intervals of the form $[n\delta,(n+1)\delta]$, where $1\leq n<\delta^{-1}$ is an integer. After dyadic pigeonholing, we can select a set $C^{\prime\prime}\subset C^{\prime}$ that is again a union of $\delta$-intervals with
\begin{equation}\label{massOfCpp}
\lambda(C^{\prime\prime})\gtrapprox 1,
\end{equation} 
and each $\delta$-interval $I\subset C^{\prime\prime}$ has measure $\lambda(I)\sim \lambda(C^{\prime\prime})(\delta|C^{\prime\prime}|^{-1})$. 
Note that 
\begin{equation}\label{sizeOfCalC}
\delta^{1-\alpha+\eps/10}\lessapprox |C^{\prime\prime}|\le \delta^{1-\alpha-(3/5)\eps}.
\end{equation}
The lower bound on $|C''|$ follows from \eqref{FrostmanConditionIneq} and \eqref{massOfCpp}, while the upper bound follows from \eqref{sumOfRi}. 

Cover $I_0\times J_0$ by squares of the form $[n\delta,(n+1)\delta]\times[m\delta,(m+1)\delta]$, and let $\mathcal{Q}_0$ be the set of squares for which $P(Q)\cap C^{\prime\prime}\neq\emptyset$. We have
\begin{equation*}
\sum_{Q\in\mathcal{Q}_0}(\mu\times\nu)(Q)\geq \lambda(C^{\prime\prime})\gtrapprox 1. 
\end{equation*}
By dyadic pigeonholing, there is a set $\mathcal{Q}\subset\mathcal{Q}_0$ and numbers $m_x,m_y$ so that
\begin{equation}\label{QCapturesMostMass}
\sum_{Q\in\mathcal{Q}}(\mu\times\nu)(Q)\gtrapprox 1,
\end{equation}
$m_x\leq (\mu(\pi_x(Q))\leq 2m_x$, and $m_y\leq (\mu(\pi_x(Q))\leq 2m_y$ for each $Q\in\mathcal{Q}$. In particular,
\begin{equation*}
\#\mathcal{Q} \approx (m_xm_y)^{-1}.
\end{equation*}

Our next goal is to show that $\mathcal{Q}$ resembles (a dense subset of) a Cartesian product. Let $A_1=\bigcup_{Q\in\mathcal{Q}}\pi_x(Q)$ and let $B_1 = \bigcup_{Q\in\mathcal{Q}}\pi_y(Q).$ We have that $A_1$ and $B_1$ are interior-disjoint unions of $\delta$-intervals contained in $I_0$ and $J_0$, respectively; $m_x\leq \mu(I)<2m_x$ for each $\delta$ interval $I\subset A_1$; $m_y\leq \nu(J)<2m_y$ for each $\delta$ interval $J\subset B_1$; and $Q\subset A_1\times B_1$ for each square $Q\in\mathcal{Q}$. 

Clearly $A_1\times B_1$ is a union of $\delta$-squares, and there are at least $\#\mathcal{Q}\approx (m_xm_y)^{-1}$ squares in this union. On the other hand, we have $m_xm_y \leq (\mu\times\nu)(Q)< 4 m_x m_y$ for each $\delta$-square $Q\subset A_1\times B_1$, so $A_1\times B_1$ is a union of $\lesssim (m_xm_y)^{-1}$ $\delta$-squares. 

Our final task is to show that $A_1$ and $B_1$ each have size  roughly $\delta^{1-\alpha}$. First, since $\mathcal{Q}$ contains $\gtrapprox (m_xm_y)^{-1}$ squares and $A_1\times B_1$ contains $\lesssim (m_xm_y)^{-1}$ squares, there exists $y_0\in J_0$ so that the line $y=y_0$ intersects $\gtrapprox  \delta^{-1}|A_1|$ squares from $\mathcal{Q}$. By \eqref{sizeOfCalC} and \eqref{lowerBoundKeyQuantities}, we have
\begin{equation*}
\delta^{1-\alpha-\eps/10}\gtrapprox |C''|\geq \bigcup_{\substack{Q\in\mathcal{Q}\\
Q\cap \{y =y_0\}\neq\emptyset}} P(Q) \gtrsim \delta\#(\{Q\in\mathcal{Q}\colon Q\cap \{y=y_0\}\neq\emptyset\})\gtrapprox |A_1|,
\end{equation*}
and an identical argument shows that
\begin{equation*}
|B_1|\lessapprox  \delta^{1-\alpha-\eps/10}.
\end{equation*}

But \eqref{FrostmanConditionIneq} and \eqref{QCapturesMostMass} implies that $|A_1|,|B_1|\gtrapprox \delta^{1-\alpha+\eps/10}.$ 
We conclude that
\begin{equation}\label{sizeOfA0B0}
\delta^{1-\alpha+\eps/10}\lessapprox |A_1|,|B_1|\lessapprox  \delta^{1-\alpha-\eps/10}.
\end{equation}

Next, if $Q,Q^\prime\in \mathcal{Q}$ are two squares that intersect a common $\delta$-interval from $C''$, and if $c>0$ is the constant from \eqref{lowerBoundKeyQuantities}, then there are points $(x,y)\in N_{c^{-1}\delta}(Q)$ and $(x',y')\in N_{c^{-1}\delta}(Q')$ so that $P(x,y) = P(x',y')$. In particular, if we define $A_2 = N_{c^{-1}\delta}A_1$ and $B_2 = N_{c^{-1}\delta}B_1$, then
\begin{equation*}
\begin{split}
\mathcal{E}_{\delta}\big(& \{x,x',y,y'\in A_2^2\times B_2^2\colon P(x,y) = P(x',y')\big)\\
&\gtrsim \#\{ (Q,Q') \in \mathcal{Q}^2\colon Q\ \textrm{and}\ Q'\ \textrm{intersect a common $\delta$-interval from}\ C^{\prime\prime}\}\\
&\gtrsim (\#\mathcal{Q})^2(\delta|C''|^{-1})\\
&\gtrapprox \delta^{-3\alpha + (4/5)\eps},
\end{split}
\end{equation*}
i.e. there is an absolute constant $K_1$ so that
\begin{equation}\label{lowerBoundNumberQuadruples}
\mathcal{E}_{\delta}\big( \{x,x',y,y'\in A_2^2\times B_2^2\colon P(x,y) = P(x',y')\big)\geq K_1^{-1}|\log\delta|^{-K_1}\delta^{-3\alpha + (4/5)\eps}.
\end{equation}

Finally, by \eqref{FrostmanConditionIneq}, the sets $A_2$ and $B_2$ obey the hypotheses of Lemma \ref{countingNumberQuadruplesNablePWedgeP} with parameters $\alpha$ and $\kappa = \alpha/2$. Since $\delta\leq\delta_0$, there exists a constant $K_2$ (which may depend on $\alpha$) so that 
\begin{equation}\label{upperBoundNumberQuadruples}
\mathcal{E}_{\delta}\big( \{x,x',y,y'\in A_2^2\times B_2^2\colon P(x,y) = P(x',y')\big)\leq K_2 \delta^{-3\alpha + \eps}.
\end{equation}
Thus if $\delta_1$ is selected sufficiently small (depending on $K_1$ and $K_2$, which in turn depend only on $\alpha$), then \eqref{upperBoundNumberQuadruples} contradicts \eqref{lowerBoundNumberQuadruples}. We conclude that \eqref{deimPAB} must hold.
\end{proof}

\section{Blaschke curvature and discretized projections} \label{BlaschkeSection}
In this section we will prove Theorem \ref{curvatureProjections}. First, we will show that if a non-concentrated set has small projection under three different maps, then the image of this set under these projections must also be non-concentrated. 

\begin{lem}\label{findingLargeNonconcentratedCartesianProduct}
Let $Q\subset[0,1]^2$ be a square, let $P\colon Q\to\RR$ be differentiable, and suppose
\begin{equation}\label{boundsOnDerivativeP}
\begin{split}
&C^{-1}\leq \inf_{Q}|\partial_x P| \leq \sup_{Q}|\partial_x P|\leq C,\\
&C^{-1}\leq \inf_{Q}|\partial_y P| \leq \sup_{Q}|\partial_y P| \leq C.
\end{split}
\end{equation}
Let $X\subset Q$ be a union of squares of the form $[m\delta,(m+1)\delta]\times[n\delta,(n+1)\delta]$, with $\mathcal{E}_{\delta}(X) = \delta^{-2\alpha}$. Suppose that for all balls $B$ of radius $r\geq\delta$ we have 
\begin{equation}\label{nonConcentrationOnBalls}
\mathcal{E}_{\delta}(X \cap B) \leq r^{2\alpha}\delta^{-2\alpha-\eta}.
\end{equation}

Suppose that
\begin{equation}\label{EHasSmallProjections}
\begin{split}
\mathcal{E}_{\delta}(\pi_x(X))& \leq \delta^{-\alpha-\eta},\\
\mathcal{E}_{\delta}(\pi_y(X))&\leq \delta^{-\alpha-\eta},\\
\mathcal{E}_{\delta}(P(X))&\leq \delta^{-\alpha-\eta}.
\end{split}
\end{equation}

Then there exists sets $A\subset \pi_x(X),\ B\subset\pi_y(X)$ that are unions of intervals of the form $[m\delta,(m+1)\delta]$, with
\begin{equation}\label{ABLargeIntersectionX}
\mathcal{E}_{\delta}(X \cap (A\times B))\geq\mathcal{E}_{\delta}(X)/2,
\end{equation}
such that for all intervals $J$ of length at least $\delta$, we have
\begin{equation}\label{ABNonConcentrated}
\begin{split}
&\mathcal{E}_{\delta}(A\cap J)\leq (C^{\frac{5}{2}} |\log\delta|^{-1}\delta^{-\frac{5}{2}\eta}) |J|^{\alpha}\delta^{-\alpha},\\
&\mathcal{E}_{\delta}(B\cap J)\leq (C^{\frac{5}{2}}|\log\delta|^{-1}\delta^{-\frac{5}{2}\eta}) |J|^{\alpha}\delta^{-\alpha}.
\end{split}
\end{equation}
\end{lem}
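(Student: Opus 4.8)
The strategy is to build $A$ and $B$ by deleting from $\pi_x(X)$ and $\pi_y(X)$ exactly the $\delta$-intervals that witness concentration, and then to show that these deletions cost at most half of the $\delta$-squares of $X$. Concretely, for each dyadic scale $\rho\in[\delta,1]$ call a $\delta$-interval $I\subset\pi_x(X)$ \emph{$\rho$-heavy} if the length-$\rho$ dyadic interval $J\supseteq I$ satisfies $\mathcal{E}_\delta(\pi_x(X)\cap J)>\tau_\rho$, where $\tau_\rho\approx|\log\delta|^{-1}\rho^\alpha\,\mathcal{E}_\delta(\pi_x(X))$; set $A=\pi_x(X)$ with all heavy intervals removed, and define $B$ symmetrically. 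By construction $\mathcal{E}_\delta(A\cap J)\le\tau_{|J|}$ for dyadic $J$, and for a general interval one passes to the smallest enclosing dyadic interval at the cost of a bounded factor; using $\mathcal{E}_\delta(\pi_x(X))\le\delta^{-\alpha-\eta}$ and absorbing constants then gives \eqref{ABNonConcentrated} (the extra $|\log\delta|^{-1}$ is precisely the one built into $\tau_\rho$, and it is harmless since $C^{5/2}|\log\delta|^{-1}\delta^{-5\eta/2}\ge1$ for small $\delta$). Hence the real content is the bound $\mathcal{E}_\delta(X\cap(A\times B))\ge\tfrac12\mathcal{E}_\delta(X)$; equivalently, the vertical strips over the $x$-heavy intervals and the horizontal strips over the $y$-heavy intervals together meet at most $\tfrac12\mathcal{E}_\delta(X)$ of the $\delta$-squares of $X$, and it suffices to bound each of the two families by $\tfrac14\mathcal{E}_\delta(X)$.

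To estimate the $X$-mass in the heavy strips at a fixed scale $\rho$, I would first dyadically pigeonhole the column heights, reducing to the case where all columns under consideration have comparable height $T_x$, so that the number of columns over an interval is comparable to the $X$-mass there divided by $T_x$; the relation $\mathcal{E}_\delta(\pi_x(X))\,T_x\approx\mathcal{E}_\delta(X)=\delta^{-2\alpha}$ together with $\mathcal{E}_\delta(\pi_x(X))\le\delta^{-\alpha-\eta}$ then pins down $T_x=\delta^{-\alpha+O(\eta)}$ up to logarithmic factors, and likewise $T_y$. There are at most $\mathcal{E}_\delta(\pi_x(X))/\tau_\rho\lesssim|\log\delta|\,\rho^{-\alpha}$ heavy $\rho$-intervals, and I would bound the mass in the union of the corresponding vertical strips by partitioning the $y$-direction into length-$\rho$ intervals, applying the ball non-concentration hypothesis \eqref{nonConcentrationOnBalls} to each resulting $\rho$-ball, and — crucially — controlling the overcounting of tall columns by estimating how many length-$\rho$ intervals in $y$ a column can meet. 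This is where the remaining hypotheses enter: $\mathcal{E}_\delta(\pi_y(X))\le\delta^{-\alpha-\eta}$ caps the total $y$-extent of the columns, and $\mathcal{E}_\delta(P(X))\le\delta^{-\alpha-\eta}$ is used through the map $(x,y)\mapsto(x,P(x,y))$, which by \eqref{boundsOnDerivativeP} is injective with bi-Lipschitz constants controlled by $C$ (the source of the factor $C^{5/2}$ in \eqref{ABNonConcentrated}): it replaces the vertical strip over $J$ by the genuinely two-dimensional set $\{(x,P(x,y)):(x,y)\in X,\ x\in J\}\subset J\times P(X)$, on which the ball bound and the projection bound combine efficiently. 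Summing the per-scale estimate over the $O(|\log\delta|)$ dyadic scales then gives the bound $\tfrac14\mathcal{E}_\delta(X)$ for the $x$-heavy strips, and symmetrically for the $y$-heavy strips.

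The main obstacle is exactly this mass estimate. Applied naively, ball non-concentration at scale $\rho$ is far too lossy when $\rho$ is close to $\delta$, since a thin strip can contain a single very tall column carrying nearly $\delta^{-\alpha}$ squares; the resolution is that the three projection bounds force $X$, up to a $\delta^{-O(\eta)}$ factor, to spread over the whole combinatorial rectangle $\pi_x(X)\times\pi_y(X)$ (because $\mathcal{E}_\delta(\pi_x(X))\,\mathcal{E}_\delta(\pi_y(X))\le\delta^{-2\alpha-2\eta}$ while $\mathcal{E}_\delta(X)=\delta^{-2\alpha}$), and only after exploiting this near-product structure — and, in the $y$-variable, its image under the bi-Lipschitz map $P$ — does the ball bound become effective uniformly in $\rho$. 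Once the heavy strips are shown to be light, deleting them and intersecting the two resulting sets produces $A$ and $B$ satisfying \eqref{ABLargeIntersectionX} and \eqref{ABNonConcentrated}.
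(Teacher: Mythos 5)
Your reduction of the lemma to the single claim that the heavy strips carry at most half of the $\delta$-squares of $X$ is a legitimate reorganization, and the derivation of \eqref{ABNonConcentrated} from the deletion is routine (provided you calibrate $\tau_\rho$ against $\delta^{-\alpha}$ rather than $\mathcal{E}_\delta(\pi_x(X))$, so that the scale-one interval is not automatically heavy). But that mass estimate is the entire content of the lemma, and the two mechanisms you name are quantitatively insufficient to prove it. At scale $\rho$ there can be $\approx\rho^{-\alpha}$ heavy intervals (up to $\delta^{O(\eta)}$ and logs), so each strip has a mass budget of $\approx\rho^{\alpha}\delta^{-2\alpha}$. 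Covering a strip by $\rho\times\rho$ squares and applying \eqref{nonConcentrationOnBalls} gives at most $\rho^{2\alpha-1}\delta^{-2\alpha-\eta}$ per strip, which overshoots the budget by $\rho^{-(1-\alpha)}\delta^{-\eta}$ and hence only works for $\rho\geq\delta^{O(\eta)}$; the containment $\{(x,P(x,y))\colon (x,y)\in X,\ x\in J\}\subset J\times P(X)$ gives at most $C^{2}\rho\,\delta^{-1-\alpha-\eta}$ per strip, which overshoots by $(\rho/\delta)^{1-\alpha}\delta^{-\eta}$ and only works for $\rho\leq\delta^{1-O(\eta)}$. At the crossover $\rho=\delta^{1/2}$ the better of the two still exceeds the budget by $\delta^{-(1-\alpha)/2}$, a fixed negative power of $\delta$. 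The appeal to ``near-product structure'' does not repair this: if you regularize column heights first and then try to show the heavy strips carry little of the regularized mass, you find you need precisely the non-concentration of the regularized projection --- which is the conclusion you are trying to prove.

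The missing idea is a double count \emph{inside a single strip} that produces a gain quadratic in the concentration factor. The paper proceeds differently: it does not delete anything, but regularizes the bipartite incidence graph between $\pi_x(X)$ and $\pi_y(X)$ by a popularity argument, so that every surviving point of $A$ sits under a column with $\gtrsim\delta^{-\alpha+\eta/2}$ surviving points while the surviving rows number at most $\delta^{-\alpha-\eta/2}$. If $\#(A\cap J)=Kt^{\alpha}\delta^{-\alpha}$ for an interval $J$ of length $t$, the strip over $J$ then contains $\gtrsim Kt^{\alpha}\delta^{-2\alpha+\eta}$ points distributed over at most $\delta^{-\alpha-\eta}$ rows, so after pigeonholing a typical row $A_b\times\{b\}$ in the strip carries $m\gtrsim Kt^{\alpha}\delta^{-\alpha+2\eta}$ points. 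By \eqref{boundsOnDerivativeP}, $P$ restricted to such a row is bi-Lipschitz, so its image contains $\gtrsim C^{-1}m$ $\delta$-separated values inside an interval of length $\leq Ct$; rows whose $y$-coordinates are $\geq C^{2}t$-separated have disjoint image intervals; and \eqref{nonConcentrationOnBalls} applied at scale $C^2t$ forces the strip to contain $\gtrsim C^{-4\alpha}Kt^{-\alpha}\delta^{2\eta}$ such separated rows. Multiplying the two factors gives $\mathcal{E}_{\delta}(P(X))\gtrsim C^{-5}K^{2}\delta^{-\alpha+4\eta}$, which against \eqref{EHasSmallProjections} yields $K\lesssim C^{5/2}\delta^{-5\eta/2}$, i.e.\ \eqref{ABNonConcentrated}, while \eqref{ABLargeIntersectionX} is automatic from the popularity step. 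It is this stacking of disjoint full-length image intervals, one per $t$-separated row, that converts the hypothesis on $P(X)$ into non-concentration of the projection; without it, your deletion scheme cannot be shown to remove only half the mass.
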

\begin{proof}
Let $X'\subset (\delta\ZZ)^2$ be the set of bottom-left corners of the squares in $X$. Consider the bipartite graph $G=G(\pi_x(X') \sqcup \pi_y(X'),\ X')$; two vertices $x\in \Pi_x(X')$ and $y\in \pi_y(X')$ are adjacent if $(x,y)\in X'$.

Using a standard popularity argument (see e.g. \cite[Lemma 2.8]{DG}), we can select sets $A\subset \pi_x(X')$, $B\subset \pi_y(X')$ and an edge set $E\subset X'$ so that $\#E\geq (\#X')/2$; each element of $A$ is adjacent to at least $(\#X')/(4\#(\pi_x(X')))\gtrsim \delta^{-\alpha+\eta/2}$ elements of $B$; and each element of $A$ is adjacent to at most $\#B=\delta^{-\alpha-\eta/2}$ elements of $B$. Similarly, each element of $B$ is adjacent to at least $(\#X)/(4(\#\pi_y(X')))\gtrsim \delta^{-\alpha+\eta/2}$ elements of $A$, and at most $\#A=\delta^{-\alpha-\eta/2}$ elements of $A$.

Let $J\subset$ be an interval of length $t\geq\delta$ and suppose
\[
\#(A \cap J) = K t^{\alpha} \delta^{-\alpha},
\]
for some $K\geq 1$. Then 
\begin{equation}\label{lowerBoundECapPiInvJ}
\#(E \cap \pi_x^{-1}(J)) \gtrsim K t^{\alpha}\delta^{-2\alpha+\eta}.
\end{equation}
For each $p\in E \cap \pi_x^{-1}(J)$, let $m(p)$ be the number of points $p'\in E \cap \pi_x^{-1}(J)$ with $\pi_y(p) = \pi_y(p')$. We have $1\leq m(p) \leq \#(A \cap J)$. By dyadic pigeonholing, there is a number $1\leq m \leq \#(A \cap J)$ and a set $E'\subset (E \cap \pi_x^{-1}(J))$ with $\#E' \geq |\log\delta|^{-1} \#(E \cap \Pi_x^{-1}(J))$ so that $m(p)\sim m$ for all $p\in E'$. Note that if $p\in E'$ and $\pi_y(p) = \pi_y(p')$, then $p'\in E$.

We have $\pi_y(E')\subset B$, and thus $\#(\pi_y(E'))\leq \#B\leq \delta^{-\alpha-\eta}$. On the other hand $(\#E')/m \leq \#(\pi_y(E'))$, so by \eqref{lowerBoundECapPiInvJ} we have
\[
m \gtrsim \frac{|\log\delta|^{-1} K t^{\alpha}\delta^{-2\alpha+\eta}}{\delta^{-\alpha-\eta}}\gtrsim Kt^{\alpha}|\log\delta|^{-1}\delta^{-\alpha+2\eta}.
\]

By \eqref{nonConcentrationOnBalls} we have that 
\[
\mathcal{E}_{C^2t}(E') \geq \frac{E'}{(C^2t)^{2\alpha}\delta^{-2\alpha-\eta}}\gtrsim C^{-4\alpha}|\log\delta|^{-1} K t^{-\alpha}\delta^{2\eta}.
\]

Thus we can select a $C^2t$-separated set $B'\subset B$ with 
\[
\#B' \gtrsim C^{-4\alpha}|\log\delta|^{-1} K t^{-\alpha}\delta^{2\eta},
\]
so that for each $b\in B'$ there are $\sim m$ points $p\in E'$ with $\pi_y(p) = b$; let $A_b$ be the projection of this set under $\pi_x$, i.e.  $A_b = \pi_x(E' \cap \pi_y^{-1}(b))$. Then by \eqref{boundsOnDerivativeP}, $\mathcal{E}_{\delta}(P(A_b))\gtrsim C^{-1}m$, and $P(A_b)$ is contained in an interval of length $\leq Ct$. Since the points in $B'$ are $\geq C^2t$ separated, by \eqref{boundsOnDerivativeP} we have that if $b$ and $b'$ are distinct, then the sets $P(A_b)$ and $P(A,b')$ are contained in disjoint intervals. 

Thus 
\[
\mathcal{E}_{\delta}(E') \geq \sum_{b\in B'} \mathcal{E}_{\delta}{E' \cap \pi_y^{-1}(b)}\geq C^{-1} (\#B') m
\]

We conclude that
\begin{equation}
\begin{split}
\mathcal{E}_{\delta}(P(X)) & \geq \mathcal{E}_{\delta}(E')\\
& \geq C^{-1} |B'| m\\
& \gtrsim  C^{4\alpha+1}|\log\delta|^{-1} K t^{-\alpha}\delta^{2\eta} Kt^{\alpha}|\log\delta|^{-1}\delta^{-\alpha+2\eta}\\
& = K^2  C^5|\log\delta|^{-2}   \delta^{-\alpha+4\eta}.
\end{split}
\end{equation}
Comparing this with \eqref{EHasSmallProjections}, we conclude that
\[
K \leq C^{\frac{5}{2}}|\log\delta|^{-1}   \delta^{-\frac{5}{2}\eta}.
\]
This establishes the first inequality in \eqref{ABNonConcentrated}. An identical argument establishes the second inequality. Our sets $A$ and $B$ are currently subsets of $(\delta\ZZ)$ rather than unions of intervals of the form $[m\delta,(m+1)\delta]$. To fix this, replace $A$ with $A + [0,\delta]$ and replace $B$ with $B + [0,\delta]$. 
\end{proof}

With this lemma, we are now ready to begin the proof of Theorem \ref{curvatureProjections}. For the reader's convenience we will restate the theorem here

\begin{curvatureProjectionsThm}
Let $0< \alpha< 1$, let $U\subset\RR^2$ be a connected open set, and let $K\subset U$ be compact. Let $\phi_1,\phi_2,\phi_3\colon U\to\RR$ be analytic functions whose gradients are pairwise linearly independent at each point of $U$. Then either the Blaschke curvature form \eqref{BlaschkeCurvatureForm} vanishes identically on $U$, or there exists $\eps = \eps(\alpha)>0$ and $\eta = \eta(\alpha,\phi_1,\phi_2,\phi_3)>0$ so that the following is true for all $\delta>0$ sufficiently small.

Let $X\subset[0,1]^2$ be a set with $\mathcal{E}_{\delta}(X)\geq \delta^{-2\alpha}$, and suppose that for all balls $B$ of diameter $r\geq\delta$, $X$ satisfies the non-concentration condition
\begin{equation*}\tag{\ref{nonConcentrationCondX}}
\begin{split}
\mathcal{E}_{\delta}(X \cap B) &\le  r^\alpha\delta^{-2\alpha-\eta}.
\end{split}
\end{equation*}
Then for at least one index $i\in \{1,2,3\}$ we have
\begin{equation*}\tag{\ref{atLeastOneBigProjection}}
\mathcal{E}_{\delta}(\phi_i(X))\geq \delta^{-\alpha-\eps}.
\end{equation*}
\end{curvatureProjectionsThm}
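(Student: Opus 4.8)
The plan is to reduce Theorem~\ref{curvatureProjections} to the single-scale energy dispersion estimate contained in Lemma~\ref{countingNumberQuadruplesNablePWedgeP}, exactly as Theorem~\ref{dimension-expander} was reduced to it, but now with three projection functions instead of two coordinate functions plus $P$. First I would make a local normalization: since the gradients of $\phi_1,\phi_2,\phi_3$ are pairwise linearly independent on $U$, and $K$ is compact, I can cover $K$ by finitely many small squares, and after an analytic change of coordinates on each such square take $\phi_1(x,y)=x$ and $\phi_2(x,y)=y$; write $P=\phi_3$ in these coordinates. Blaschke's theorem (cited after \eqref{BlaschkeCurvatureForm}) says the curvature form is coordinate-independent, so if it does not vanish identically then $\partial_{xy}\log|P_x/P_y|$ does not vanish identically on this square, hence by \eqref{relateKpLog} and Lemma~\ref{whenIsPSpecialFormProp} none of $P_x,P_y,P_{xy},K_P$ vanishes identically. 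Pigeonholing $X$ into one of these finitely many coordinate squares only costs a constant factor in the covering numbers and the non-concentration exponent, so it suffices to prove the statement on a single square where $\phi_1=x$, $\phi_2=y$, $\phi_3=P$, and $P_x,P_y,P_{xy},K_P$ are all bounded below by a constant $c>0$ (after shrinking the square slightly, using \L ojasiewicz / Lemma~\ref{cubesWhereAnalyticFLarge} if necessary to localize away from their zero sets).

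Next I argue by contradiction: suppose all three projections satisfy $\mathcal{E}_\delta(\phi_i(X))\le\delta^{-\alpha-\eps}$ with $\eps$ small. Replacing $X$ by a union of $\delta$-squares (inflating by $\delta/2$) and pigeonholing to the subsquare where $P_x,P_y\sim1$, I am exactly in the hypotheses of Lemma~\ref{findingLargeNonconcentratedCartesianProduct} (with $\eta$ the concentration exponent and $C$ the constant bounding $P_x,P_y$). That lemma produces sets $A\subset\pi_x(X)$, $B\subset\pi_y(X)$, unions of $\delta$-intervals, with $\mathcal{E}_\delta(X\cap(A\times B))\ge\mathcal{E}_\delta(X)/2\gtrsim\delta^{-2\alpha}$ and satisfying the non-concentration bounds \eqref{ABNonConcentrated}, i.e.\ $\mathcal{E}_\delta(A\cap J),\mathcal{E}_\delta(B\cap J)\lesssim|\log\delta|^{-1}\delta^{-(5/2)\eta}|J|^{\alpha}\delta^{-\alpha}$. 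In particular $A,B$ obey the non-concentration hypotheses of Lemma~\ref{countingNumberQuadruplesNablePWedgeP} with $\kappa=\alpha$ provided $(5/2)\eta$ plus the $\eps'$ from that lemma is dominated by its allowed $\eps$; so we get the upper bound
\begin{equation*}
\mathcal{E}_{\delta}\big(\{(x,x',y,y')\in A^2\times B^2\colon P(x,y)=P(x',y')\}\big)\lesssim\delta^{-3\alpha+\eps'},
\end{equation*}
with $\eps'=\eps'(\alpha)>0$ independent of the $\phi_i$.

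For the lower bound on the same quantity I use that $\mathcal{E}_\delta(P(A\times B))\le\mathcal{E}_\delta(P(X))\le\delta^{-\alpha-\eps}$ together with a Cauchy--Schwarz / double-counting argument of the type in Lemma~\ref{CSLem}: since $P$ has $|\nabla P|\sim1$ on the square, $A\times B$ is a union of $\gtrsim\delta^{-2\alpha}$ $\delta$-squares, and its image under $P$ has covering number $\le\delta^{-\alpha-\eps}$, a pigeonhole on the $\delta$-fibers of $P$ restricted to $X\cap(A\times B)$ gives
\begin{equation*}
\mathcal{E}_{\delta}\big(\{(x,x',y,y')\in A^2\times B^2\colon P(x,y)=P(x',y')\}\big)\gtrsim\frac{(\mathcal{E}_\delta(X\cap(A\times B)))^2}{\mathcal{E}_\delta(P(X))}\gtrsim\delta^{-3\alpha+\eps}.
\end{equation*}
Choosing $\eps$ (and then $\eta$) small enough in terms of $\alpha$ and $\eps'$ makes the lower bound $\delta^{-3\alpha+\eps}$ contradict the upper bound $\delta^{-3\alpha+\eps'}$, completing the proof; the dependence of $\eta$ on the $\phi_i$ enters only through the constants $c$ and $C$ and the number of coordinate charts.

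The main obstacle I expect is the bookkeeping in the normalization step: one must check that after the analytic change of coordinates turning $(\phi_1,\phi_2)$ into $(x,y)$ the third function $P=\phi_3$ still has all of $P_x,P_y,P_{xy},K_P$ nonvanishing (this is where the non-vanishing of the Blaschke curvature is genuinely used, via \eqref{relateKpLog}, together with the coordinate-invariance statement of Blaschke), and that the change of coordinates together with the covering-number pigeonholing only perturbs the exponents in \eqref{nonConcentrationCondX} and \eqref{atLeastOneBigProjection} by amounts absorbable into $\eta$ and $\eps$. A secondary technical point is that Lemma~\ref{findingLargeNonconcentratedCartesianProduct} is stated with concentration exponent $2\alpha$ on balls for $X$ and yields exponent $\alpha$ for $A,B$; one should verify \eqref{nonConcentrationCondX} (which has exponent $\alpha$, not $2\alpha$, on the right) implies the hypothesis \eqref{nonConcentrationOnBalls} after absorbing constants — indeed $r^\alpha\le r^{2\alpha}$ for $r\le1$ so \eqref{nonConcentrationCondX} is the stronger hypothesis and the implication is immediate — so this point is harmless but worth a sentence.
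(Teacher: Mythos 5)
Your overall architecture matches the paper's: straighten coordinates so that $\phi_1=x$, $\phi_2=y$, $\phi_3=P$, use coordinate-invariance of the Blaschke form plus \eqref{relateKpLog} to see $K_P\not\equiv 0$, extract a Cartesian product $A\times B$ via Lemma~\ref{findingLargeNonconcentratedCartesianProduct}, get an upper bound on quadruples from Lemma~\ref{countingNumberQuadruplesNablePWedgeP}, and contradict it with Cauchy--Schwarz. However, there is a genuine gap in your normalization step. You propose to pass, \emph{before} extracting the product structure, to a single fixed square on which $P_{xy}$ and $K_P$ are bounded below by a constant $c>0$, claiming this costs only a constant factor. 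That cannot be arranged: $Z(K_P)$ is a curve that may pass through the region where $X$ lives, and the ball non-concentration \eqref{nonConcentrationCondX} does not prevent essentially all of $X$ from lying in a thin neighborhood of such a curve (cover $N_s(Z(K_P))$ by $\sim s^{-1}$ balls of radius $s$; the hypothesis only gives $\mathcal{E}_\delta(X\cap N_s(Z(K_P)))\lesssim s^{\alpha-1}\delta^{-2\alpha-\eta}$, which exceeds $\mathcal{E}_\delta(X)$ for every $s\le 1$). The tool you cite for this step, Lemma~\ref{cubesWhereAnalyticFLarge}, requires its input to be a \emph{product} of one-dimensional non-concentrated sets; a planar set satisfying \eqref{nonConcentrationCondX} is not of that form. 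The paper therefore runs the steps in the opposite order: first apply Lemma~\ref{findingLargeNonconcentratedCartesianProduct} to produce $A\times B$ (only the bounds $|\partial_xP|,|\partial_yP|\sim 1$, which do hold on the whole coordinate square by pairwise independence of the gradients, are needed here), and only then apply Lemma~\ref{cubesWhereAnalyticFLarge} to $A\times B$ to find a sub-cube $Q$ on which $K_P\geq\delta^{w}$. This costs a $\delta^{O(w)}$ (not constant) fraction of the mass, forces the Cauchy--Schwarz step to be run with $c=\delta^{w}$ rather than a constant, and is exactly why the final bookkeeping must balance $w$, $\eta$, $\eps$ against the $\eps'$ from Lemma~\ref{countingNumberQuadruplesNablePWedgeP}. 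As written, your cleaner-looking version skips a loss that is actually unavoidable.

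Two smaller points. First, your remark comparing \eqref{nonConcentrationCondX} with the hypothesis \eqref{nonConcentrationOnBalls} of Lemma~\ref{findingLargeNonconcentratedCartesianProduct} has the inequality backwards: for $0\le r\le 1$ one has $r^{2\alpha}\le r^{\alpha}$, so the bound $\mathcal{E}_\delta(X\cap B)\le r^{\alpha}\delta^{-2\alpha-\eta}$ is the \emph{weaker} (more permissive) condition, and it does not formally imply $\mathcal{E}_\delta(X\cap B)\le r^{2\alpha}\delta^{-2\alpha-\eta}$; this discrepancy needs to be addressed rather than dismissed. Second, in your final display the relevant image is $P\bigl(X\cap(A\times B)\bigr)\subset P(X)$, not $P(A\times B)$, which need not be contained in $P(X)$; the Cauchy--Schwarz should be applied to $X\cap(A\times B)\cap Q$ as in Lemma~\ref{CSLem}.
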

\begin{proof}
Suppose that the Blaschke curvature form does not vanish identically on $U$, and that \eqref{atLeastOneBigProjection} is false for $i=1,2$. If $\eta=\eta(\alpha,\phi_1,\phi_2,\phi_3)$ and $\eps=\eps(\alpha)$ are selected sufficiently small, we will show that \eqref{atLeastOneBigProjection} holds for $i=3$. 

For each point $p\in K$, there is a neighborhood $U_p\subset U$ and an analytic change of coordinates so that $\phi_1(x,y) = x$ and $\phi_2(x,y) = y$. For each set $U_p$, let $S_p\subset U_p$ be a closed square (in the straightened coordinates) whose interior $S_p^{\circ}$ contains $p$. Since the open sets $\{S_p^{\circ}\}$ cover $K$, by compactness we can select a finite sub-cover $\{S_p^{\circ}\}_{p\in\mathcal{P}}$. 

By pigeonholing, there exists $p_0\in\mathcal{P}$ so that $\mathcal{E}_{\delta}(S_{p_0}\cap X)\geq \mathcal{E}_{\delta}(X)/|\mathcal{P}|$. Let $U' = U_{p_0}$ and $S = S_{p_0}$. Applying the analytic change of coordinates described above, we can suppose that $\phi_1(x,y) = x$ and $\phi_2(x,y) = y$ on the square $S$. Define $P$ to be the function $\phi_3$ after this change of coordinates. We have that $P$ is analytic on $U'\supset S$.

% By hypothesis we have neither $\partial_x P$ nor $\partial_y P$ vanish identically on $U_{p_0}$. Furthermore, since the Blaschke curvature form \eqref{BlaschkeCurvatureForm} is invariant under change of coordinates, we have that $\partial_{xy}\log\Big(\frac{\partial_x P}{\partial_y P}\Big)$ does not vanish identically on $U$. Thus by Lemma \ref{special}, we have that $P$ is not a special form. 

% Let $X'$ be the image of $X\cap S$ under this change of coordinates. We have that $\mathcal{E}_{\delta}(X')\gtrsim\mathcal{E}_{\delta}(X)$, and $X'$ still satisfies \eqref{nonConcentrationCondX} (the inequality has been weakened by a harmless multiplicative constant that depends only on $\phi_1,\phi_2,\phi_3)$.

% Note that $S\subset[0,1]^2$, and $S$ has diameter at least $\min \{ \operatorname{diam}(S_p)\colon p\in\mathcal{P}\}$; this quantity depends on $\phi_1,\phi_2,\phi_3$, but (crucially) is independent of $\delta$ and $E$. Since $|\nabla^2 P|\lesssim 1$ on $[0,1]^2$, after restricting to a slightly smaller square (which by abuse of notation, we will also call $S$) we can assume that 
% \begin{equation}\label{partialDerivativesComparable}
% \begin{split}
% &\max_{(x,y)\in S}|\partial_x P|\leq 2 \min{(x,y)\in S}|\partial_x P|,\\
% &\max_{(x,y)\in S}|\partial_y P|\leq 2 \min{(x,y)\in S}|\partial_y P|.
% \end{split}
% \end{equation}
Since $S$ has diameter $\sim 1$, after translation and isotropic rescaling, we can suppose that $S = [0,1]^2$. This rescaling will introduce a harmless multiplicative factor (depending only $\phi_1,\phi_2,\phi_3$) into our final estimate. Define $X'$ to be the image of $S\cap X$ under this transformation and define $X_{\delta}$ to be the union of $\delta$-squares of the form $[m\delta,(m+1)\delta]\times [n\delta,(n+1)\delta]$ that intersect $X'$. Abusing notation slightly, we will continue to use $P\colon [0,1]^2\to\RR$ to refer to the function $\phi_3$ in these (straightened, rescaled) coordinates. 

Since the gradients of $\phi_1,\phi_2,\phi_3$ are pairwise linearly independent at each point of $U$, there is a constant $C$ (depending on $\phi_1,\phi_2,\phi_3$ but independent of $\delta$ and $X$) so that
\begin{equation}\label{sizeOfGradients}
C^{-1}\leq |\partial_x P|,\ |\partial_yP| \leq C \quad\textrm{on}\ [0,1]^2.
\end{equation}

Since $\mathcal{E}_{\delta}(\phi_3(X))\gtrsim \mathcal{E}_{\delta}(P(X_{\delta}))$ (with an implicit constant that depends only on $\phi_1,\phi_2,\phi_3$), in order to prove that \eqref{atLeastOneBigProjection} holds for $i=3$ (and $\delta>0$ sufficiently small), we must show that.
\begin{equation}\label{needToShowPXpLarge}
\mathcal{E}_{\delta}(P(X_{\delta}))\geq  \delta^{-\alpha-\eps}.
\end{equation}
Suppose \eqref{needToShowPXpLarge} fails. We will obtain a contradiction if $\eps = \eps(\alpha)$ and $\eta = \eta(\alpha,P)=\eta(\alpha,\phi_1,\phi_2,\phi_3)$ is selected sufficiently small.

Since \eqref{atLeastOneBigProjection} fails for $i=1,2$, we have that 
\begin{equation}
\begin{split}
&\mathcal{E}_{\delta}(\pi_x (X_{\delta}))\lesssim \delta^{-\alpha-\eps},\\
&\mathcal{E}_{\delta}(\pi_y (X_{\delta}))\lesssim \delta^{-\alpha-\eps}.
\end{split}
\end{equation}
Since we are supposing that \eqref{needToShowPXpLarge} fails, we can apply Lemma \ref{findingLargeNonconcentratedCartesianProduct} (we can apply this lemma provided we choose $\eta\leq\eps$) to obtain sets $A,B\subset[0,1]$ that are unions of intervals of the form $[m\delta,(m+1)\delta]$ that satisfy

\begin{equation}\label{sizeOfAdeltaBdelta}
\begin{split}
&\mathcal{E}_{\delta}(A) \lesssim \delta^{-\alpha-\eps},\\
&\mathcal{E}_{\delta}(B) \lesssim \delta^{-\alpha-\eps},
\end{split}
\end{equation}
and for all intervals $J$ of length at least $\delta$, 
\begin{equation}\label{nonConcAB}
\begin{split}
&\mathcal{E}_{\delta}(A\cap J)\lesssim  |J|^{\alpha}\delta^{-\alpha-3\eps},\\
&\mathcal{E}_{\delta}(B\cap J)\lesssim |J|^{\alpha}\delta^{-\alpha-3\eps},
\end{split}
\end{equation}
and 
\begin{equation}\label{XDeltaLargeIntersectionATimesB}
\mathcal{E}_{\delta}\big( X_{\delta} \cap (A\times B)\big) \gtrsim \mathcal{E}_{\delta}(X_{\delta})\gtrsim \delta^{-2\alpha},
\end{equation}
i.e.
\begin{equation}\label{XDeltaLargeIntersectionATimesBV2}
\mathcal{E}_{\delta}\big( X_{\delta} \cap (A\times B)\big) \gtrsim \delta^{2\eps}\mathcal{E}_{\delta}(A\times B).
\end{equation}

%Let $A_{\delta} = A + [-\delta,\delta]$ and $B_{\delta} = B + [-\delta,\delta]$, so $A_{\delta}$ is the union of intervals of the form $[n\delta,(n+1)\delta]$ that intersect $A$, and similarly for $B_{\delta}$. We have that $A_{\delta}$ and $B_{\delta}$ also satisfy \eqref{sizeOfAdeltaBdelta}--\eqref{XDeltaLargeIntersectionATimesBV2}.

Since the Blaschke curvature form \eqref{BlaschkeCurvatureForm} is invariant under change of coordinates, we have that $\partial_{xy}\log\Big(\frac{\partial_x P}{\partial_y P}\Big)$ does not vanish identically on $[0,1]^2$. By \eqref{relateKpLog} we have that $K_P(x,y)$ does not vanish identically on $[0,1]^2$. 

The remainder of the argument is similar to the proof of Theorem \ref{main-entropy-growth} from Section \ref{mainThmProofsSectionOne}. Let $C$ be the constant obtained by applying Lemma \ref{cubesWhereAnalyticFLarge} to the function $K_P$. We will apply this lemma with $w \sim C\eps\eta/\alpha$ ($\eta>0$ will be chosen small enough that $w<\alpha/2$), and let $\mathcal{Q}$ be the resulting collection of cubes. With this choice of $w$, we have
\begin{equation*}
\mathcal{E}_{\delta}\Big( (A \times B) \backslash \bigcup_{Q\in\mathcal{Q}}Q\Big)\leq \delta^{3\eps}\mathcal{E}_{\delta}(A\times B).
\end{equation*}
Thus by \eqref{XDeltaLargeIntersectionATimesBV2}, if $\delta>0$ is selected sufficiently small then
\begin{equation*}
\begin{split}
\mathcal{E}_{\delta}\Big( X_{\delta} \cap (A \times B ) \cap  \bigcup_{Q\in\mathcal{Q}}Q\Big) &\geq
\mathcal{E}_{\delta}\big( X_{\delta} \cap (A\times B)\big) - \mathcal{E}_{\delta}\Big( (A \times B) \backslash \bigcup_{Q\in\mathcal{Q}}Q\Big)\\
&\geq\frac{1}{2}\mathcal{E}_{\delta}(X_{\delta}).
\end{split}
\end{equation*}
By pigeonholing, there is a cube $Q\in\mathcal{Q}$ so that
\begin{equation}\label{lotsOfMassInCube}
\mathcal{E}_{\delta}(X_{\delta}\cap (A \times B )\cap Q) \gtrsim \delta^{4w/\alpha}\mathcal{E}_{\delta}(A \times B )\gtrsim \delta^{4C\eta/\alpha^2}\mathcal{E}_{\delta}(X_{\delta})\gtrsim \delta^{4C\eps\eta/\alpha^2-2\alpha}.
\end{equation}
Let $A_1\times B_1 = (A\times B)\cap Q$.  If $\eta = \eta(\alpha,C)=\eta(\alpha,\phi_1,\phi_2,\phi_3)$ is selected sufficiently small, then by  Lemma~\ref{countingNumberQuadruplesNablePWedgeP}, there exists $\eps' = \eps'(\alpha)>0$ so that 
\[
{\mathcal E}_\delta\big(\{ ( x,  x',  y,  y')\in A_1^2\times B_1^2\colon P(x,y)=P(x',y')\}\big)\lesssim \delta^{-3\alpha+\eps'}. 
\]
But this implies
\begin{equation}\label{upperBoundOnQuadruples}
{\mathcal E}_\delta\big(\{ ( x,y,x',y')\in \big(X_{\delta}\cap (A\times B) \cap Q\big)^2 \colon P(x,y)=P(x',y')\}\big)\lesssim \delta^{-3\alpha+\eps'}.
\end{equation}
Applying Lemma \ref{CSLem} to $Q\cap X'$ with $c = \delta^w$ and using \eqref{lotsOfMassInCube} and \eqref{upperBoundOnQuadruples}, we conclude that
\[
\mathcal{E}_{\delta}(P(X_{\delta}))\geq \mathcal{E}_{\delta}(P(X_{\delta}\cap (A\times B) ))\gtrsim 
\frac{\big(\delta^{4C\eps\eta/\alpha^2-2\alpha}\big)^2}{\delta^{-3\alpha+\eps'}} = \delta^{8C\eps\eta/\alpha^2-\alpha-\eps'}.
\]
Since we are supposing that \eqref{needToShowPXpLarge} fails, we have 
\[
\delta^{8C\eps\eta/\alpha^2-\alpha-\eps'} \lesssim  \delta^{-\alpha-\eps}.
\]
For $\delta$ sufficiently small, this implies $8C\eps\eta/\alpha^2+\eps-\eps' > 0$. Selecting $\eta \leq \alpha^2/(8C)$ and $\eps<2\eps'$ we obtain a contradiction. We conclude that \eqref{needToShowPXpLarge} holds, which completes the proof.  
\end{proof}


\begin{thebibliography}{9}
\bibitem{BS} A. Balog, E. Szemer\'edi. A statistical theorem of set addition. \emph{Combinatorica}, 14:263--268, 1994. 
%
\bibitem{BB} M. Bays and E. Breuillard. Projective geometries arising from Elekes-Szab\'o problems. To appear, \emph{Ann. Sci. Ec. Norm. Super.} {\tt arXiv:1806.03422}, 2018.
%
%\bibitem{BCR} J.\ Bochnak, M.\ Coste, and M.\ Roy. \emph{Real Algebraic Geometry}, Springer-Verlag, Berlin, 1998.
%

\bibitem{Bl} W. Blaschke. \emph{Einf\"uhrung in die Geometrie der Waben}. Birkh\"auser, 1955.
%

\bibitem{B03} J. Bourgain. On the Erd\H{o}s-Volkmann and Katz-Tao ring conjectures. \emph{Geom. Funct. Anal} 15(1): 334--365. 2003.

%
\bibitem{B10} J. Bourgain. The discretized sum-product and projection theorems. \emph{J. Anal. Math} 112(1):193--236, 2010. 

%
\bibitem{BG2} J. Bourgain and A. Gamburd.  Uniform expansion bounds for Cayley graphs of $SL_2(F_p)$. \emph{Ann. Math.} 167(2):625-642, 2008.

%
\bibitem{BG} J. Bourgain and A. Gamburd. On the spectral gap for finitely-generated subgroups of $SU(2)$. \emph{Invent. Math.} 171: 83--121, 2008.
%

\bibitem{BKT} J. Bourgain, N. Katz, and T. Tao. A sum-product estimate in finite fields, and applications. \emph{Geom. Funct. Anal.} 14:27--57, 2004. 
%
%\bibitem{BG2} J. Bourgain and A. Gamburd. A spectral gap theorem in $SU(d)$. \emph{J. Eur. Math. Soc.} 14(5):1455--1511, 2012.
%

\bibitem{Chr}M.~Christ. On Trilinear Oscillatory Integral Inequalities and Related Topics. \emph{arXiv:2007.12753}, 2020. 

%

\bibitem{DG} Dvir, Z., Gopi, S., On the number of rich lines in truly high dimensional sets. In: \emph{Proc. 31st Annu. Sympos. Comput. Geom.(SoCG 2015)}, Leibniz International Proceedings in Informatics (LIPIcs), 34, 584–598, 2015.
%
\bibitem{Gra}
L. Grafakos (2008). Classical Fourier Analysis. Springer. ISBN 978-0-387-09431-1.
%
\bibitem{EM} G.A. Edgar and C. Miller.  Borel subrings of the reals. \emph{Proc. Amer. Math. Soc.} 131:1121--1129, 2003. 
%
\bibitem{ER}G. Elekes and L. R\'onyai. A combinatorial problem on polynomials and rational functions. \emph{J. Combin. Theory Ser. A.}89:1--20, 2000.
%
\bibitem{ES}G. Elekes and E. Szab\'o. How to find groups? (And how to use them in Erd\H{o}s geometry?). \emph{Combinatorica} 32: 537--571, 2012.
%
\bibitem{ErdSze}P. Erd\H{o}s and E. Szemer\'edi. On sums and products of integers. {\it Studies in Pure Mathematics} 213--218, 1983.

\bibitem{EV} P. Erd\H{o}s and B. Volkmann. Additive gruppen mit vorgegebener Hausdorffscher dimension. \emph{J. Reine Angew. Math.} 221:203--208, 1966.
%
\bibitem{GKZ} L. Guth, N. Katz, and J. Zahl. On the discretized sum-product problem. To appear, \emph{Int. Math. Res. Not.} {\tt 	arXiv:1804.02475}, 2018.
%
%\bibitem{G} W. T. Gowers. A new proof of Szemer\'edi's theorem. \emph{Geom. Funct. Anal.} 11:465--588, 2001.
%
\bibitem{H} W. He. Discretized sum-product estimates in matrix algebras. \emph{J. Anal. Math.} 139: 637--676, 2019.
%
\bibitem{HS} W. He and N. de Saxc\'e. Sum-product for real Lie groups. To appear, \emph{J. Eur. Math. Soc.}  {\tt arXiv:1806.06375}, 2018.
%
\bibitem{Izo} A.~Izosimov. Curvature of Poisson pencils in dimension three. \emph{Differential Geom. Appl.} 31: 557--567, 2013. 
%
\bibitem{JMR} J.~L.~Joly, G.~Metivier and J.~Rauch. Trilinear Compensated Compactness and Nonlinear Geometric Optics. \emph{Ann. of Math.} 142: 121--169, 1995.

%
\bibitem{KT}N. Katz and T. Tao. Some connections between Falconer's distance set conjecture and sets of Furstenburg type. \emph{New York J. Math.} 7: 149--187. 2001.
%
\bibitem{Ko}T. K\H{o}rner. Hausdorff dimension of sums of sets with themselves,. \emph{Studia Math.} 188:287--295, 2008.
%

\bibitem{Lo} S. \L{}ojasiewicz. Sur le probl\'eme de la division. \emph{Studia Math} 18:87--136, 1959.

\bibitem{Lo2}  S. \L{}ojasiewicz. \emph{Ensembles semianalytiques}.  Cours Facult\'e des Sciences d'Orsay,
Inst. Hautes \'Etudes Sei. Bures-sur-Yvette, 1965.

\bibitem{MRSW} M. Makhul, O. Roche-Newton, S. Stevens, and A. Warren. The Elekes-Szab\'o problem and the uniformity conjecture. {\tt 	arXiv:2009.13258}, 2020.

\bibitem{Mat} P.~ Mattila. Geometry of sets and measures in Euclidean spaces. Cambridge University Press, Cambridge, 1995. 
%

%\bibitem{P} T. S. Pham. An explicit bound for the \L{}ojasiewicz exponent of real polynomials. \emph{Kodai Math. J.} 35:311--319, 2012.
%
\bibitem{RaSh} O. E. Raz, M. Sharir. The number of unit-area triangles in the plane: Theme and variations. \emph{Combinatorica}. 37:1221--1240, 2017.
%
\bibitem{RSS}O. E. Raz, M. Sharir, Solymosi. Polynomials vanishing on grids: The Elekes-R\'onyai problem revisited. \emph{Amer. J. Math.}, 138: 1029--1065, 2016.
%
\bibitem{RSZ}O. E. Raz, M. Sharir, and F. de Zeeuw. Polynomials vanishing on Cartesian products: The Elekes-Szab\'o Theorem revisited. \emph{Duke Math. J.} 165(18):3517--3566, 2016. 
%
\bibitem{RS}
O. E. Raz and Z. Shem-Tov.
Expanding polynomials: A generalization of the Elekes-R\'onyai theorem to $d$ variables,
{\it Combinatorica} (2020). https://doi.org/10.1007/s00493-020-4041-0%

\bibitem{RS20} M. Rudnev and S. Stevens. An update on the sum-product problem. {\tt 	arXiv:2005.11145}, 2020.
%
\bibitem{S} N. de Saxc\'e. A product theorem in simple Lie groups. \emph{Geom. Funct. Anal.} 25(3):915--941, 2015.
%
\bibitem{Shm}
P. Shmerkin. A nonlinear version of Bourgain's projection theorem, {\tt arXiv:2003.01636}, 2020.
%
%\bibitem{Ste}Y. Stein, The total reducibility order of a polynomial in two variables, {\it Israel J. Math.} 68(1):109--122, 1989.
%
\bibitem{Sol} 
P. Solern\'o. Effective \L{}ojasiewicz inequalities in semialgebraic geometry. \emph{Appl. Algebra
Engrg. Comm. Comput.} 2:2--14, 1991.

\bibitem{TaoBook} T. Tao. \emph{Expansion in finite simple groups of Lie type}. American Mathematical Society Graduate Studies in Mathematics vol 164. 2015. 
%
\bibitem{W}H. Wang. Exposition of Elekes Szab\'o paper. {\tt arXiv:2003.01636}, 2015. 

%\bibitem{Won} R. Wongkew. Volumes of tubular neighborhoods of real algebraic varieties. \emph{Pacific J. Math.} 159:177--184, 2003.
\end{thebibliography}
\end{document}